\documentclass[11pt]{amsart}
\pdfoutput=1
\usepackage{graphicx}
\usepackage{amssymb, amsmath, amscd}
\usepackage{float}
\usepackage[mathscr]{euscript}
\usepackage{array}
\usepackage{chngcntr}
\usepackage[bookmarks=false]{hyperref}

\newtheorem{theorem}{Theorem}[section]
\newtheorem{corollary}[theorem]{Corollary}
\newtheorem{lemma}[theorem]{Lemma}

\newtheorem{proposition}[theorem]{Proposition}

\newtheorem{definition-proposition}[theorem]{Definition-Proposition}
\newtheorem{lemma-notation}[theorem]{Lemma-Notation}

\theoremstyle{definition}
\newtheorem{definition}[theorem]{Definition}

\newtheorem{example}[theorem]{Example}
\newtheorem{remark}[theorem]{Remark}

\newtheorem{convention}[theorem]{Convention}

\newcommand{\Z}{\mathbb{Z}}

\newcommand{\R}{\mathbb{R}}
\newcommand{\C}{\mathbb{C}}

\newcommand{\twobytwo}[4]
{
	\begin{pmatrix}
		#1 & #2 \\
		#3 & #4 \\
	\end{pmatrix}
}

\setcounter{MaxMatrixCols}{12}

\begin{document}
\title[The number of convex tilings of the sphere]{The number of convex tilings of the sphere by triangles, squares, or hexagons}
\author[Philip Engel]{Philip Engel$^\dagger$}
\address[Philip Engel]{Harvard University}
\thanks{$^\dagger$Research partially supported by NSF grant DMS-1502585.}
\email{engel@math.harvard.edu}

\author{Peter Smillie}
\address[Peter Smillie]{Harvard University}
\email{smillie@math.harvard.edu}

\maketitle

\begin{abstract}
A tiling of the sphere by triangles, squares, or hexagons is {\it convex} if every vertex has at most $6$, $4$, or $3$ polygons adjacent to it, respectively. Assigning an appropriate weight to any tiling, our main result is explicit formulas for the weighted number of convex tilings with a given number of tiles. To prove these formulas, we build on work of Thurston, who showed that the convex triangulations correspond to orbits of vectors of positive norm in a Hermitian lattice $\Lambda\subset \mathbb{C}^{1,9}$. First, we extend this result to convex square- and hexagon-tilings. Then, we explicitly compute the relevant lattice $\Lambda$. Next, we integrate the Siegel theta function for $\Lambda$ to produce a modular form whose Fourier coefficients encode the weighted number of tilings. Finally, we determine the formulas using finite-dimensionality of spaces of modular forms.
\end{abstract}

\section{Introduction}

%


A tiling of the sphere by triangles, squares, or hexagons is {\it convex} if every vertex is adjacent to at most six, four, or three polygons, respectively. In this paper, we count convex tilings of $S^2$ up to combinatorial equivalence. Convexity is a very strong restriction; while the total number of tilings grows exponentially \cite{tutte} in the number of tiles, the number of convex tilings grows polynomially. Thurston \cite{thurston} proved that convex triangulations with $2n$ triangles correspond to $U(\Lambda)$-orbits of vectors of norm $2n$ in a Hermitian lattice $\Lambda\subset \mathbb{C}^{1,9}$. It follows that the number of triangulations with less than $2n$ triangles is order $n^{10}$. Similarly, we show that convex square- and hexagon-tilings correspond to lattice points in $\C^{1,5}$ and $\C^{1,3}$ respectively. 


It is natural to weight a tiling by the inverse of the order of the $U(\Lambda)$-stabilizer of the associated vector. The weight is expressed in terms of the tiling itself in Definition \ref{weight}. Counting with weight does not change the order of growth, and allows for simple exact formulas for the number of convex tilings. Following Convention \ref{convention} in the case of hexagon-tilings, our main theorem is:

\begin{theorem}\label{intro} The weighted number of oriented convex tilings of $S^2$ with $n$ tiles is

\begin{centering}

\begin{tabular} { l l }

$\displaystyle\frac{809}{2^{15}3^{13}5^2}\sigma_9(n/2)$ & for triangles, \\[0.5em]

$\displaystyle\frac{1}{2^{13}3^2}(\sigma_5(n)+8\sigma_5(n/2))$ & for squares, and  \\[0.5em]

$\displaystyle\frac{1}{2^33^4}(\sigma_3(n)-9\sigma_3(n/3))$ & for hexagons,\\[0.7em] \end{tabular}

\end{centering} \noindent where $\sigma_m(n)=\sum_{d\mid n} d^m$ when $n$ is an integer, and otherwise $\sigma_m(n)=0$.

\end{theorem}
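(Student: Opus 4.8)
The plan is to interpret the weighted tiling count as a Siegel theta integral over a complex-hyperbolic ball quotient, conclude that its generating function is a holomorphic modular form, and then determine that form using the very small dimension of the ambient space. Concretely: by the extension of Thurston's correspondence proved earlier, the weighted number of oriented convex tilings with $n$ tiles (the weight being that of Definition~\ref{weight}) equals the mass $m_\Lambda(n) = \sum_{[v]} |\mathrm{Stab}_{U(\Lambda)}(v)|^{-1}$, the sum being over $U(\Lambda)$-orbits of vectors $v$ of the appropriate norm in the Hermitian lattice $\Lambda\subset\mathbb{C}^{1,k}$, where $k = 9,5,3$ for triangles, squares, hexagons and the number of tiles is an affine-linear function of $\langle v,v\rangle$. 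So it suffices to understand the generating function $F_\Lambda(\tau) = \sum_n m_\Lambda(n)\,q^n$ for a suitable normalization of $q = e^{2\pi i\tau}$ --- the mismatch between $\langle v,v\rangle$ and the number of tiles being exactly what introduces the arguments $n/2$ and $n/3$ in the theorem.

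I would attach to $\Lambda$ its Siegel theta function $\Theta_\Lambda(\tau, Z)$, with $Z$ ranging over the positive lines in $\mathbb{C}^{1,k}$, i.e.\ over complex-hyperbolic $k$-space $\mathbb{CH}^k$; equipping the Schwartz function with the relevant Kudla--Millson harmonic polynomial makes $\Theta_\Lambda(\cdot, Z)$ holomorphic in $\tau$ of weight $k+1$ and modular for a congruence subgroup $\Gamma$ determined by $\Lambda$ (and the discriminant of $\mathbb{Z}[\omega]$ or $\mathbb{Z}[i]$), while leaving it $U(\Lambda)$-invariant in $Z$. Since the ball quotient $\mathbb{CH}^k/U(\Lambda)$ has finite volume (shown earlier), the integral $F_\Lambda(\tau) = \mathrm{const}\cdot\int_{\mathbb{CH}^k/U(\Lambda)}\Theta_\Lambda(\tau, Z)\,d\mu(Z)$ converges --- here it is essential that $k+1\ge 3 > 2$, so no non-holomorphic Eisenstein term appears --- and unfolding the lattice sum against the integral, regrouping vectors into $U(\Lambda)$-orbits and tracking the orbifold stabilizers, identifies this integral with $\sum_n m_\Lambda(n)\,q^n$. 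Thus $F_\Lambda$ is a holomorphic modular form of weight $k+1$ --- weight $10$, $6$, $4$ for triangles, squares, hexagons.

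Using the explicit description of $\Lambda$ (computed earlier) I would identify the relevant space: after re-indexing by the number of tiles, $F_\Lambda$ should lie in $M_{10}(\mathrm{SL}_2(\mathbb{Z}))$ for triangles (one-dimensional, spanned by $E_{10}$), in $M_6(\Gamma_0(2))$ for squares (two-dimensional, spanned by $E_6(\tau)$ and $E_6(2\tau)$), and in $M_4(\Gamma_0(3))$ for hexagons (two-dimensional, spanned by $E_4(\tau)$ and $E_4(3\tau)$) --- all three spaces containing no cusp forms, so $F_\Lambda$ is a pure Eisenstein series pinned down by at most two Fourier coefficients. These I would obtain as follows. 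The constant term $m_\Lambda(0)$ is the Siegel mass of $\Lambda$, which the Siegel--Weil mass formula for Hermitian lattices evaluates as a finite product of local densities times special values of $\zeta$ and of the Dirichlet $L$-function of the quadratic character mod $3$ (resp.\ mod $4$); these special values produce exactly the rational prefactors $\tfrac{809}{2^{15}3^{13}5^2}$, $\tfrac{1}{2^{13}3^2}$, $\tfrac{1}{2^{3}3^{4}}$ of the theorem, with the anomalous prime $809$ entering through one of the $L$-values. One more input in the square and hexagon cases --- the value of $m_\Lambda$ at the smallest admissible norm, obtained by listing the few smallest convex tilings and their symmetry groups, or equivalently the value of $F_\Lambda$ at the second cusp --- fixes the remaining coefficient, and expanding $E_{k+1}$ in terms of $\sigma_{k}$ gives the three formulas.

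The parts I expect to be most delicate are: (i) proving that the theta integral actually converges to an \emph{honest holomorphic} modular form and that the unfolding and orbit-regrouping are valid in the presence of the orbifold locus of $\mathbb{CH}^k/U(\Lambda)$, so that $F_\Lambda$ genuinely has the asserted Fourier coefficients; and (ii) carrying out the mass (equivalently, volume) computation exactly --- in particular, controlling the local densities and special $L$-values precisely enough to produce the constants in the statement, including the prime $809$ in the triangle case.
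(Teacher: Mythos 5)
Your overall architecture matches the paper's: Thurston-type correspondence, a theta integral over $U(\Lambda)\backslash\mathbb{CH}^s$, and finite-dimensionality of the target space of modular forms. But two steps diverge, one harmlessly and one in a way that leaves a real gap. First, the holomorphy mechanism: the paper integrates the \emph{plain} Siegel theta function, obtaining a weight $1-s$ harmonic Maass form $g_\Lambda$ whose Fourier expansion has incomplete-Gamma terms from negative-norm vectors and a $(\mathrm{Im}\,\tau)^s$ term from $v=0$, and only then applies the raising operator $D^s=(q\partial_q)^s$ (Bruinier--Ono--Rhoades) to land in holomorphic weight $1+s$. Your Kudla--Millson variant can be made to work (the generating series of $0$-dimensional special cycles on a ball quotient is a holomorphic form of weight $1+s$), but as stated it is slightly off: inserting a harmonic polynomial does not make $\Theta_\Lambda(\cdot,Z)$ holomorphic in $\tau$ pointwise, since the antiholomorphic weight $s$ coming from the negative-definite part cannot be cancelled; holomorphy only emerges after integration, exactly as in the paper's route. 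Also, the arguments $n/2$ (squares) and $n/3$ (hexagons) in the final formulas come from the oldforms $E_6(2\tau)$ and $E_4(3\tau)$ at levels $2$ and $3$, not from the conversion between $v\cdot v$ and the tile count; only the $\sigma_9(n/2)$ in the triangle case arises from that re-indexing.

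The genuine gap is in how you pin down the Eisenstein coefficients. The paper never computes the constant term a priori: it determines \emph{all} unknown constants from the positive Fourier coefficients at $n=1$ and $n=2$, i.e., by listing the convex tilings with one or two tiles and their automorphism groups (e.g., the prime $809$ falls out of $\tfrac{1}{2\cdot 3\cdot 155520^2}+\tfrac{1}{6\cdot 648^3}$ for the two triangulations by two triangles), and then reads off the volume of $\Gamma\backslash\mathbb{CH}^s$ as a \emph{consequence}, checking it against McMullen. You instead propose to compute the constant term via a Siegel--Weil/local-density calculation and assert that the special $L$-values ``produce exactly'' the prefactors including $809$. That computation is the hardest part of your route and is not carried out; in the triangle case, where the space is one-dimensional, your entire answer rests on it. Note also that the constant term of the final holomorphic form is $-s!\,\mathrm{Vol}(\Gamma\backslash\mathbb{CH}^s)/(|Z(\Gamma)|(4\pi)^s)$, a covolume of an arithmetic ball quotient (Prasad/Hirzebruch--Mumford territory), not the mass of a genus of definite lattices as your phrase ``the Siegel mass of $\Lambda$'' suggests; conflating the two would send you down the wrong computation. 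The fix is cheap: use the $n=1$ (and where needed $n=2$) coefficients from explicit small tilings, as the paper does, and let the constant term be an output rather than an input.
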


The key to proving Theorem \ref{intro} is to show that the generating function for the weighted number of tilings is a modular form of a specific weight and level. The modularity follows from our second theorem:

\begin{theorem}\label{modularity} Let $k=6$, $4$, or $3$. Let $\zeta_k$ be a primitive $k$th root of unity. Let $(\Lambda,\star)$ be a Hermitian lattice over $\Z[\zeta_k]$ of signature $(1,s)$ with $s>1$. Suppose $\Lambda=(1+\zeta_k)\Lambda^\vee$, where $\Lambda^\vee$ denotes the Hermitian dual of $\Lambda$. Let $\Gamma$ be a finite index subgroup of $U(\Lambda)$ and let $\Lambda^+\subset \Lambda$ denote the set of positive norm vectors. Then, there is a constant $c_0$ such that $$c_0+ \sum_{v \in \Gamma\backslash\Lambda^+} \frac{1}{|{\rm Stab}(v)|} \,{\rm exp}\left(\frac{2\pi i \tau v\star v}{|1+\zeta_k|^2}\right)$$ is modular form in $\tau$ of weight $1+s$ with respect to the group $\Gamma_1(|1-\zeta_k|^2)$. \end{theorem}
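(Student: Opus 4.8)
\emph{Strategy.} The plan is to realize the series as (a classical differential operator applied to) the integral over the locally symmetric orbifold $\Gamma\backslash\mathbb{B}^s$ of a Siegel theta function attached to $\Lambda$, where $\mathbb{B}^s$ is the complex hyperbolic ball of positive lines in $\Lambda\otimes_{\Z[\zeta_k]}\C\cong\C^{1,s}$, and then to deduce holomorphicity and the weight from the structure of that integral. Write $M=|1+\zeta_k|^2$ and regard $\Lambda$ with its natural even $\Z$-structure, so that $\Lambda$ becomes an even integral $\Z$-lattice of rank $2(1+s)$ and real signature $(2,2s)$. For $\tau=x+iy\in\mathbb{H}$ and a positive line $\ell\in\mathbb{B}^s$, decompose each $v\in\Lambda$ orthogonally as $v=v_\ell+v_{\ell^\perp}$ and set
\[
\Theta(\tau,\ell)=\sum_{v\in\Lambda}\exp\!\Big(\tfrac{2\pi i}{M}\big(\tau\,(v_\ell\star v_\ell)+\bar\tau\,(v_{\ell^\perp}\star v_{\ell^\perp})\big)\Big).
\]
By the transformation theory of theta series (equivalently, the Weil representation of the discriminant form $\Lambda^\vee/\Lambda$), $\Theta(\cdot,\ell)$ is a real-analytic modular form in $\tau$ of weight $(1,s)$, i.e.\ transforming with automorphy factor $(c\tau+d)(c\bar\tau+d)^{s}$; the hypothesis $\Lambda=(1+\zeta_k)\Lambda^\vee$ identifies $\Lambda^\vee/\Lambda$ as a module annihilated by $1+\zeta_k$ and pins the level down to $\Gamma_1(|1-\zeta_k|^2)$ with trivial multiplier. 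This is where the number-theoretic content of the hypothesis enters, and checking it is a (largely formal) computation with the discriminant form. Finally, $\Theta(\tau,\ell)$ is manifestly $U(\Lambda)$-invariant in $\ell$, hence $\Gamma$-invariant, so it descends to $\Gamma\backslash\mathbb{B}^s$.

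\emph{Integration.} Since $\Gamma$ is a lattice in $U(1,s)$, the orbifold $\Gamma\backslash\mathbb{B}^s$ has finite volume and finitely many cusps. A growth estimate for $\Theta$ along each cusp, valid precisely when $s>1$ (Weil's convergence criterion for theta integrals), shows that $\Phi(\tau):=\int_{\Gamma\backslash\mathbb{B}^s}\Theta(\tau,\ell)\,d\mu(\ell)$ converges. As $\Theta$ is modular in $\tau$, invariant in $\ell$, and $d\mu$ is invariant, $\Phi$ is a real-analytic modular form of weight $(1,s)$ for $\Gamma_1(|1-\zeta_k|^2)$.

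\emph{Fourier expansion.} Unfolding the sum over $v$ against the integral by $\Gamma$-orbits gives $\Phi(\tau)=\sum_{[v]\in\Gamma\backslash\Lambda}\int_{\mathrm{Stab}(v)\backslash\mathbb{B}^s}\exp(\cdots)\,d\mu$. The term $v=0$ contributes $\mathrm{vol}(\Gamma\backslash\mathbb{B}^s)$. A vector $v$ with $v\star v>0$ spans a point $\ell_v\in\mathbb{B}^s$ and has finite stabilizer; working in geodesic polar coordinates about $\ell_v$ and writing $|v_\ell\star v_\ell|=(v\star v)\cosh^2\rho$, $|v_{\ell^\perp}\star v_{\ell^\perp}|=(v\star v)\sinh^2\rho$, the $\ell$-integral collapses to a Gamma integral and yields $\int_{\mathbb{B}^s}\exp(\cdots)\,d\mu=(\mathrm{const})\cdot(v\star v)^{-s}y^{-s}\,q^{(v\star v)/M}$ with $q=e^{2\pi i\tau/M}$; the orbifold factor $1/|\mathrm{Stab}(v)|$ appears from the quotient. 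Null vectors contribute the remaining, possibly $y$-dependent, constant-term data associated to the cusps, and negative-norm vectors contribute purely anti-holomorphic ($\bar q$-power) terms times powers of $y$. Hence, up to an overall constant,
\[
\Phi(\tau)=y^{-s}\Big(c_1+\sum_{[v]\in\Gamma\backslash\Lambda^+}\frac{(v\star v)^{-s}}{|\mathrm{Stab}(v)|}\,q^{(v\star v)/M}\Big)+(\text{anti-holomorphic and cuspidal terms}).
\]

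\emph{Extracting a holomorphic form, and the main obstacle.} Multiplying by $y^{s}$ produces a real-analytic form $y^{s}\Phi$ of scalar weight $1-s$; by Bol's identity the operator $D^{s}=\big(\tfrac{1}{2\pi i}\tfrac{d}{d\tau}\big)^{s}$ carries weight $1-s$ to weight $1+s$, and $D^{s}q^{a}=a^{s}q^{a}$ cancels the factor $(v\star v)^{-s}$ on the holomorphic part. Applying holomorphic projection to $D^{s}(y^{s}\Phi)$ — legitimate because $s>1$ forces moderate growth and weight $1+s\ge 4$ — annihilates the anti-holomorphic contributions and leaves a \emph{holomorphic} modular form of weight $1+s$ for $\Gamma_1(|1-\zeta_k|^2)$ whose Fourier expansion is, up to a nonzero constant, $c_0+\sum_{[v]\in\Gamma\backslash\Lambda^+}|\mathrm{Stab}(v)|^{-1}q^{(v\star v)/M}$; rescaling gives the theorem. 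The delicate step is exactly this last one: making precise the interplay of Bol's operator, holomorphic projection, and the cuspidal/anti-holomorphic pieces so that the output is genuinely holomorphic of weight $1+s$ with the stated expansion, and in particular determining the surviving constant $c_0$. Equivalently, one can package Steps two through four as the Siegel--Weil identity for the relevant unitary dual pair, identifying $\Phi$ with a (holomorphic, since $s>1$) Eisenstein series of weight $1+s$ and reading off its coefficients from local densities; the substantive content is then the convergence of the theta integral for $s>1$ together with the unfolding computation above. A secondary technical point is the level and multiplier determination in the first paragraph, which is the only place the hypothesis $\Lambda=(1+\zeta_k)\Lambda^\vee$ is used.
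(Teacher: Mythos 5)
Your proposal follows essentially the same route as the paper: the Siegel theta function of the underlying even $\Z$-lattice of signature $(2,2s)$, its weight-$(1,s)$ modularity for $\Gamma_1(|1-\zeta_k|^2)$ coming from $\Lambda\cong|1-\zeta_k|\Lambda^*$, integration over $U(\Lambda)\backslash\mathbb{CH}^s$, unfolding by orbits, and Bol's operator $D^s$. The one step you flag as delicate is closed in the paper without holomorphic projection: the explicit orbit integrals show that $(\mathrm{Im}\,\tau)^s\int\Theta\,dp$ is a \emph{harmonic} Maass form of weight $1-s$ --- the negative-norm orbits contribute exactly incomplete-gamma terms $\Gamma(s,4\pi|n|\,\mathrm{Im}\,\tau)\,q^{-|n|}$ and the zero vector contributes $c^-(0)(\mathrm{Im}\,\tau)^s$ --- so Theorem 1.1 of Bruinier--Ono--Rhoades applies directly and says $D^s$ already annihilates the non-holomorphic part, yielding a holomorphic form of weight $1+s$ with $n$-th coefficient $n^s c^+(n)$ and constant term $(-1)^s s!\,c^-(0)/(4\pi)^s$, which is precisely the constant $c_0$ you were unsure how to determine.
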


Theorem \ref{intro} follows from Theorem \ref{modularity} because of the finite-dimensionality of spaces of modular forms. The relevant modular forms are uniquely determined by some small Fourier coefficients, which equal the weighted number of tilings for some small numbers of tiles.

In Section 2, we review Thurston's work on flat cone spheres and extend his results to convex square- and hexagon-tilings. We prove in Proposition \ref{stuff} that convex tilings correspond to $\Gamma$-orbits of positive norm vectors in some Hermitian lattice. In Section 3, we explicitly compute the lattices corresponding to convex tilings by each polygon. By Proposition \ref{dual}, these lattices satisfy the assumptions of Theorem \ref{modularity}. In Section 4, we prove Theorem \ref{modularity}. The technique is to integrate the Siegel theta function---a function on $\mathbb{CH}^s\times \mathbb{H}$ satisfying certain transformation properties---over the complex-hyperbolic orbifold $\Gamma\backslash \mathbb{CH}^s$. This integral is a Maass form on $\mathbb{H}$ whose Fourier coefficients encode the weighted counts of orbits of vectors of given norm. Finally, in Section 5, we deduce Theorem \ref{intro} from Theorem \ref{modularity} by determining the weighted number of tilings with one or two tiles.

 \vspace{5pt}

{\bf Acknowledgements:} We would like to thank Simion Filip, Karsten Gimre, and Curtis McMullen for their suggestions and conversation. In addition, we thank the referee for their comments and suggestions.

\section{Flat Cone Spheres and Tilings}

A {\it flat cone sphere} is a sphere with a flat metric away from a finite set of points $\{p_1,\dots,p_n\}$, which near each point is isometric to a convex Euclidean cone. A convex tiling by triangles, squares, or hexagons has the structure of a flat cone sphere by declaring each tile to be regular of a fixed side length. In 1942, Alexandrov \cite{alexandrov} proved that every flat cone sphere is isometric to a unique convex polyhedron in $\R^3$. Near each singularity $p_i$ the angle of the cone is $2\pi-\alpha_i$ for some $\alpha_i\in(0,2 \pi)$. We call $\alpha_i$ the {\it cone angle deficit}. The Gauss-Bonnet theorem implies that $$\sum \alpha_i=4\pi.$$

Thurston \cite{thurston} studied the moduli space $\mathcal{M}_{\alpha_1,\dots,\alpha_n}$ of all flat cone spheres modulo scaling with specified cone angle deficits at $n$ unmarked points. Using local period maps to complex hyperbolic space $$\mathbb{CH}^{n-3}:=\mathbb{P}\{v\in\C^{1,n-3}\,\big{|}\, v^2>0\},$$ Thurston showed that $\mathcal{M}_{\alpha_1,\dots,\alpha_n}$ is a complex hyperbolic orbifold. Such moduli spaces were studied earlier by Deligne and Mostow \cite{dm}, see also Looijenga's survey paper \cite{looijenga}, in the context of hypergeometric functions and Lauricella differentials, i.e. differentials on $\mathbb{P}^1$ of the form $$\eta=(z-z_1)^{-r_1}\dots(z-z_{n-1})^{-r_{n-1}}\,dz$$ where $r_i\in(0,1)$. When $\sum r_i\in(1,2)$, such a differential induces on $\mathbb{P}^1$ the structure of a flat cone sphere by using $\int_0^z \eta$ as a local flat coordinate. 

The moduli space $\mathcal{M}_{\alpha_1,\dots,\alpha_n}$ is generally metrically incomplete because the cone singularities can collide, in which case the cone angle adds. Therefore Thurston considered the metric completion $\overline{\mathcal{M}}_{\alpha_1,\dots,\alpha_n}$, which is stratified by moduli spaces of flat cone spheres in which some collections of singularities have coalesced, cf. Theorem 3.4 of \cite{thurston}. If the completion is still a complex hyperbolic orbifold, a generalization of the Cartan-Hadamard theorem implies that it is a quotient of $\mathbb{CH}^{n-3}$ by the action of a discrete subgroup $\mathbb{P}\Gamma \subset \mathbb{P}U(1,n-3)$.


The completion $\overline{\mathcal{M}}_{\alpha_1,\dots,\alpha_n}$ is always an orbifold when $n=3$, since the moduli space is a point. For $n=4$, there are infinitely many choices of $\{\alpha_i\}$ for which $\overline{\mathcal{M}}_{\alpha_1,\alpha_2,\alpha_3,\alpha_4}$ is a complex hyperbolic orbifold. Indeed, every quotient by a triangle group $\mathbb{P}\Gamma\subset \mathbb{P}U(1,1)$ acting on $\mathbb{CH}^1=\mathbb{H}^2$ appears as a moduli space of tetrahedra. For $n=5$, Picard \cite{picard} and his student LeVavasseur found the cone angles for which $\overline{\mathcal{M}}_{\alpha_1,\dots,\alpha_5}$ is an orbifold, in the guise of determining when the monodromy group of the hypergeometric function $$F(x,y)=\int_1^\infty z^{-r_1}(z-1)^{-r_2}(z-x)^{-r_3}(z-y)^{-r_4}\,dz$$ is discrete. For $n\geq 5$, Mostow \cite{mostow} and later Thurston enumerated all $94$ values of the $\alpha_i$ for which the moduli space is an orbifold. 

The highest-dimensional completed moduli space of flat cone spheres which is an orbifold corresponds to $n=12$ and $\alpha_i=\pi/3$ for all $i$. Every convex triangulation defines a point in the completion by declaring each triangle metrically equilateral. See \cite{schwartz} for a more detailed treatment of this case. There is a stratification $$\overline{\mathcal{M}}_{\pi/3,\dots,\pi/3}=\!\!\!\!\!\coprod_{\substack{\sum \mu_i=12 \\ \mu_i\in\{1,2,3,4,5\}}}\!\!\!\!\!\mathcal{M}_{\mu_1\pi/3,\,\dots,\,\mu_n\pi/3}.$$ Of the 78 completed moduli spaces of flat cone spheres with $n\geq 5$ which are orbifold quotients of $\mathbb{CH}^{n-3}$ by arithmetic groups, $34$ are strata in the above example. The total number of strata is $45$.

Let $\mathbb{P}\Gamma$ denote the monodromy group of $\overline{\mathcal{M}}_{\pi/3,\dots,\pi/3}$ so that $$\mathbb{P}\Gamma\backslash\C\mathbb{H}^9=\overline{\mathcal{M}}_{\pi/3,\dots,\pi/3}.$$ Thurston observed a special feature of this example. The group $\mathbb{P}\Gamma$ can be lifted to a subgroup $\Gamma\subset U(1,9)$ which preserves a lattice $\Lambda\subset \C^{1,9}$ of geometric significance: The flat cone spheres which admit a triangulation into metrically equilateral triangles are those lying in the projectivization of $\Lambda$. In fact, Thurston showed:

\begin{theorem}[\cite{thurston}, Theorem 0.1] There is a bijective correspondence $$\{\textrm{Oriented convex triangulations of $S^2$}\}\longleftrightarrow \Gamma\backslash \Lambda^+$$ where $\Lambda^+$ is the set of positive norm vectors in $\Lambda$. The number of triangles is the norm of the associated vector. \end{theorem}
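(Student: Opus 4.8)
\emph{Overview.} The plan is to run the geometric correspondence already set up in the introduction --- between equilateral-triangulable flat cone spheres and points of $\mathbb{P}\Lambda\subset\mathbb{CH}^9$ --- and refine it to a bijection at the level of lattice vectors, keeping careful track of scaling, of the Eisenstein structure, and of combinatorial equivalence. Write $\zeta_6=e^{\pi i/3}$, so that $\mathbb{Z}[\zeta_6]$ is the ring of Eisenstein integers, and recall that $\Gamma=U(\Lambda)$ contains multiplication by every unit $\zeta_6^j$.

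\emph{The forward map, and the norm.} Given an oriented convex triangulation, metrize each face as a unit equilateral triangle, obtaining a flat cone sphere $X$ with cone angles integer multiples of $\pi/3$ and, by convexity, at most $2\pi$; so $X$ represents a point of $\overline{\mathcal{M}}_{\pi/3,\dots,\pi/3}$ on the stratum recorded by its valences. Its local developing maps determine a period point, which lies in $\mathbb{P}\Lambda$ because $X$ is triangulable; moreover the periods over integral cycles are Eisenstein integers (a cycle may be taken along edges of the triangulation, and developing edges gives a net translation in $\mathbb{Z}[\zeta_6]$), so the period class $v$ lies in $\Lambda$ itself, not merely in its projectivization. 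The unit side-length pins the scale, so the only ambiguity in $v$ is multiplication by a root of unity $\zeta_6^j$ (rotation of the developing plane) together with the change-of-marking action of $\Gamma$; thus $v$ is well-defined in $\Gamma\backslash\Lambda^+$ and depends only on the combinatorial equivalence class of the triangulation. We land in \emph{oriented} triangulations because the period map is holomorphic: an orientation-reversing relabeling applies the complex-conjugation involution, which lies outside $\Gamma=U(\Lambda)$. Finally, the area of $X$ is $\tfrac{\sqrt3}{4}$ times the number $F$ of triangles, and also a fixed positive multiple of $v\star v$ depending only on the degree of the relevant cyclic cover and on the normalization of $\star$; normalizing $\star$ (as part of defining $\Lambda$) so these agree gives $v\star v=F$, which is even since $2E=3F$.

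\emph{Surjectivity.} Let $w\in\Lambda^+$. Since $\mathbb{Z}[\zeta_6]$ is a PID, $\mathbb{C}w\cap\Lambda$ is free of rank one, say $\mathbb{C}w\cap\Lambda=\mathbb{Z}[\zeta_6]\,v_0$, so $w=a\,v_0$ with $a\in\mathbb{Z}[\zeta_6]$. The point $\mathbb{P}(v_0)\in\mathbb{P}\Lambda$ is triangulable, hence realized by a flat cone sphere $X_0$ with period class $v_0$; its cone points and holonomy translations lie in $\mathbb{Z}[\zeta_6]$. Then the cone sphere $a\,X_0$ (isometric to a rotated rescaling of $X_0$) has cone points and holonomy translations in $a\mathbb{Z}[\zeta_6]\subseteq\mathbb{Z}[\zeta_6]$, so pulling back the standard unit-equilateral tiling of $\mathbb{C}$ through the developing map of $a\,X_0$ yields a unit-equilateral triangulation with period class $a\,v_0=w$ and, by the norm computation, exactly $w\star w$ triangles. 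Thus $w$ is realized by a convex triangulation.

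\emph{Injectivity, and the main obstacle.} Suppose two oriented convex triangulations give the same class in $\Gamma\backslash\Lambda^+$, hence the same point of $\mathbb{P}\Gamma\backslash\mathbb{CH}^9=\overline{\mathcal{M}}_{\pi/3,\dots,\pi/3}$; then their underlying flat cone spheres agree up to scaling, and since the two classes have equal norm the surfaces have equal area, so the scalings agree and the surfaces are identified by an orientation-preserving isometry. It remains to show this isometry carries one triangulation to the other --- equivalently, that a flat cone sphere admits at most one unit-equilateral triangulation up to isometry. This rigidity is the crux of the argument and the step I expect to be the main obstacle. One sees that the cone points must be vertices of any such triangulation (a point whose cone angle is not a multiple of $2\pi$ cannot lie interior to a flat triangle or to one of its edges), and that around each cone point the fan of $\pi/3$-sectors is then forced; the work is to propagate the triangulation over the simply connected $S^2$ and verify that no genuine choices enter --- the edge directions are constrained to the six Eisenstein directions of the locally defined developing map, and the remaining, smooth vertices are recovered as the preimages of $\mathbb{Z}[\zeta_6]$ --- so that the combinatorial type is determined. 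Making this last point airtight, in particular ruling out that two inequivalent triangulations could develop onto a common metric, is where the real care is required.
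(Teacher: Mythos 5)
Your forward map, norm normalization, and surjectivity argument track the geometric content of the proof (both yours and the paper's ultimately rest on Thurston's Theorem 4.1, which supplies the isomorphism of the completed moduli space with $\mathbb{P}\Gamma\backslash\mathbb{CH}^9$). The problem is injectivity. You correctly isolate the needed step, but the rigidity lemma you propose to prove --- that a flat cone sphere admits at most one unit-equilateral triangulation up to isometry --- is \emph{false}, so no amount of ``real care'' in propagating the fan of sectors will close the gap. Here is a counterexample. Take a primitive $v_0\in\Lambda^+$ and let $\pi=1+2\zeta_6$, so that $\pi\overline{\pi}=7$. The vectors $7v_0$ and $\pi^2v_0=(-3+8\zeta_6)v_0$ have the same norm and the same projectivization, hence determine the same flat cone sphere $M$ with the same area; but $7$ and $\pi^2$ are not associates in $\Z[\zeta_6]$, so the two vectors lie in distinct $\Gamma$-orbits (a $\gamma\in\Gamma$ carrying one to the other would send $v_0$ to $(\pi/\overline{\pi})v_0\notin\Lambda$). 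Concretely, on $M$ the two triangulations are the preimages, under one fixed developing map, of the two hexagonal lattices $\Z[\zeta_6]$ and $(\overline{\pi}/\pi)\Z[\zeta_6]$; both contain the developed cone points (which lie in $7\Z[\zeta_6]$) and both are invariant under the holonomy, so both pull back to genuine unit-equilateral triangulations of $M$, and they are combinatorially inequivalent, hence related by no isometry of $M$. Your injectivity step therefore reduces the true statement to a false one.

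The moral is that the flat metric alone does not remember the triangulation: one must carry along the extra datum the paper calls $\Xi$, the flat sublattice of the tangent bundle locally modelled on $\Z[\zeta_6]\subset\C$ --- equivalently, the period vector up to multiplication by a \emph{unit} rather than up to $\C^*$. This is precisely why the proof of Proposition \ref{stuff} works with the rigidified moduli space $\overline{\mathcal{M}}^{rig}_{\pi/3,\dots,\pi/3}$: the triangulation is canonically recovered from the pair $(M,\Xi)$ as the intersection with the unique translate of the planar tiling through the cone points, $\Gamma$-equivalent vectors give isomorphic pairs $(M,\Xi)$, and the residual scalar ambiguity is disposed of by comparing the fibers $\C^*/\langle\zeta_6\rangle$ of the projectivization on the two sides, not by a metric rigidity statement. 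Your argument can be repaired by replacing ``isometric cone spheres'' with ``isomorphic rigidified cone spheres'' throughout; as written, the injectivity step is a genuine gap resting on a claim that fails.
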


We generalize this correspondence to square- and hexagon-tilings in Proposition \ref{stuff}, and construct the lattices explicitly in each case. We first set a convention regarding hexagon-tilings:






\begin{convention}\label{convention} For the remainder of the paper, a {\it hexagon-tiling} denotes a tiling by hexagons such that (1) the vertices of the tiling are bicolored black and white with adjacent vertices assigned different colors, and (2) all vertices of non-zero curvature are black. \end{convention}

\begin{remark} \label{hex and tri} Every hexagon-tiling yields a tiling by triangles by connecting the three black vertices within each hexagon. This triangulation has cone angle deficits which are even multiples of $\pi/3$. Conversely, suppose we have a triangulation with cone angle deficits which are even multiples of $\pi/3$. Because we are tiling the sphere, this condition on cone angle deficits guarantees the existence of a pair of bicolorings of the triangles so that no two adjacent triangles have the same color. For each bicoloring, connect the vertices of every white triangle to its center to produce a hexagon-tiling whose white vertices are the centers of the white triangles. \end{remark}

%
%
%

Having set this convention about hexagon-tilings, we may now proceed in a unified manner for triangles, squares, and hexagons. 

\begin{definition} A {\it Hermitian lattice $\Lambda$ over $\Z[\zeta_k]$} is a finitely-generated, free $\Z[\zeta_k]$-module with a Hermitian pairing valued in $\Z[\zeta_k]$. \end{definition}

\begin{proposition}\label{stuff} Let $k=6$, $4$, or $3$. There is a Hermitian lattice $\Lambda$ of signature $(1,2k-3)$ defined over $\Z[\zeta_k]$, and a group of Hermitian isometries $\Gamma\subset U(\Lambda)$ so that the orbits $\Gamma\backslash \Lambda^+$ of positive norm vectors are in bijective correspondence with oriented convex $\frac{2k}{k-2}$-gon tilings of the sphere.\end{proposition}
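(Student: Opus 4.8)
The plan is to adapt Thurston's argument for triangulations (the quoted Theorem 0.1 of \cite{thurston}), the essentially new ingredients being a uniform cohomological construction of $\Lambda$ and, for hexagons, the translation through Convention \ref{convention}. \textbf{Step 1: from tilings to flat cone spheres.} Given a convex tiling by regular $m$-gons with $m=\tfrac{2k}{k-2}$, declare each tile metrically regular of unit edge; this makes $S^2$ a flat cone sphere with cone points at the non-flat vertices. A flat vertex is surrounded by $\tfrac{2m}{m-2}=k$ tiles, so the total angle at any vertex is an integer multiple of the interior angle $\tfrac{(m-2)\pi}{m}=\tfrac{2\pi}{k}$, and convexity says that multiple is at most $k-1$. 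Hence every cone angle deficit $\alpha_i$ is a positive integer multiple of $2\pi/k$, and Gauss--Bonnet $\sum\alpha_i=4\pi$ forces $\sum\tfrac{k}{2\pi}\alpha_i=2k$; so there are at most $2k$ cone points, with equality for the generic tiling. The developing map of this flat structure takes values in a translate of $\Z[\zeta_k]\subset\C$ and has linear holonomy in $\mu_k$. For hexagons ($k=3$) we first use Remark \ref{hex and tri} to replace the hexagon-tiling by the triangulation spanned by its black vertices, whose deficits are even multiples of $\pi/3$, i.e.\ multiples of $2\pi/3=2\pi/k$; the hexagon-tiling is recovered from this triangulation together with one of its two bicolorings, a $\Z/2$-worth of data we carry along.

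\textbf{Step 2: the Hermitian lattice.} Consider the maximal configuration of $n=2k$ cone points, each of deficit $2\pi/k$ with holonomy a fixed primitive $k$th root of unity, and write $X^\circ=S^2\setminus\{p_1,\dots,p_{2k}\}$. Let $\mathcal L$ be the rank-one local system of $\Z[\zeta_k]$-modules on $X^\circ$ with these monodromies --- equivalently, the $\zeta_k$-isotypic summand of $H^1$ of the degree-$k$ cyclic cover of $S^2$ branched at the $p_i$ --- and set $\Lambda=H^1(X^\circ;\mathcal L)$. Poincar\'e duality together with the identification $\overline{\mathcal L}\cong\mathcal L^\vee$ coming from unitarity of $\mu_k$ makes $\Lambda$ a Hermitian lattice over $\Z[\zeta_k]$; since $\mathcal L$ has no invariants under any local monodromy, this form is nondegenerate and $\Lambda$ has rank $-\chi(X^\circ)=2k-2$. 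The Hodge theory of cyclic covers (Chevalley--Weil: $h^{1,0}=-1+2k\cdot\tfrac{1}{k}=1$) shows the signature is $(1,2k-3)$; in particular $\mathbb{P}\Lambda^+$ is a nonempty open subset of $\mathbb{CH}^{2k-3}$. Let $\Gamma\subset U(\Lambda)$ be the image of the monodromy of the family of such flat cone spheres over the configuration space of $2k$ unordered points on $S^2$ --- for hexagons, the subgroup additionally fixing the bicoloring. By Deligne--Mostow and Mostow \cite{dm,mostow} the relevant arithmeticity criterion holds in each case (it amounts to $(1-\tfrac{2}{k})^{-1}=\tfrac{k}{k-2}\in\tfrac12\Z$), so $\Gamma$ has finite index in $U(\Lambda)$.

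\textbf{Step 3: the period map and the bijection.} For a flat cone sphere $X$ in the family, its developing one-form is a closed $(\mathcal L\otimes\C)$-valued form with prescribed singularities, hence represents a class $\omega_X\in\Lambda\otimes_{\Z[\zeta_k]}\C$ of positive norm (the norm computes the area of $X$), well-defined up to the $\C^\times$-action rescaling the developing map. As Thurston proves for $k=6$ and the same argument yields for $k=4,3$, the map $X\mapsto[\omega_X]$ is a local isomorphism onto $\mathbb{P}\Gamma\backslash\mathbb{CH}^{2k-3}=\overline{\mathcal M}_{2\pi/k,\dots,2\pi/k}$, extends to the metric completion, and is surjective. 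A flat cone sphere underlying an $m$-gon-tiling has developing values in $\Z[\zeta_k]$, whence $\omega_X\in\Lambda$; conversely, if $\omega_X\in\Lambda$ then Thurston's decomposition of an integral flat cone sphere into tiles (a Delaunay-type argument), carried out with $\Z[\zeta_k]$ in place of the Eisenstein lattice, produces an $m$-gon-tiling, and for $k=3$ recovers the bicolored triangulation. Passing from the projective picture to $\Lambda$ itself --- which records the scale, equivalently the number of tiles, as the norm of $\omega_X$ --- turns these orbifold points into $\Gamma$-orbits in $\Lambda^+$; two tilings give the same orbit exactly when they are combinatorially equivalent, since $\Gamma$ encodes precisely the relabelings of the unmarked cone points (and, for hexagons, of the bicoloring). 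This yields the asserted bijection between $\Gamma\backslash\Lambda^+$ and oriented convex $m$-gon-tilings.

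\textbf{Main obstacle.} The real work is transporting Thurston's geometric arguments from $k=6$ to $k=4,3$: the most delicate point is the equivalence between ``$\omega_X\in\Lambda$'' and ``$X$ is $m$-gon-tileable'', whose hard direction (integral $\Rightarrow$ tileable) needs a Delaunay/Voronoi decomposition adapted to the square lattice and to bicolored triangulations. One must also verify, case by case, that $\overline{\mathcal M}_{2\pi/k,\dots,2\pi/k}$ is an orbifold with arithmetic monodromy, and that the Hermitian lattices attached to the coalesced (fewer than $2k$ cone points) configurations embed compatibly into the single lattice $\Lambda$ built from the $n=2k$ configuration, so that every stratum of the completion contributes its tilings to $\Gamma\backslash\Lambda^+$. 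The hexagon bookkeeping --- realizing the bicoloring of Remark \ref{hex and tri} as the datum cutting out a finite-index subgroup of $U(\Lambda)$ --- is the last point requiring care.
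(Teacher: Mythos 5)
Your overall architecture is sound and recognizably parallel to the paper's (which itself follows Section 7 of \cite{thurston}): pass from tilings to flat cone spheres with deficits in $\tfrac{2\pi}{k}\Z$, build a Hermitian lattice of signature $(1,2k-3)$ from $H^1$ with coefficients in a rank-one $\Z[\zeta_k]$-local system, and use the period/developing map to match $\Gamma$-orbits with tilings. The signature count and the Deligne--Mostow discreteness condition check out. But you have explicitly deferred the step that is the actual content of the proposition: the equivalence ``$\omega_X\in\Lambda$ $\Leftrightarrow$ $X$ carries an $m$-gon tiling,'' whose hard direction you propose to handle by an unspecified ``Delaunay-type argument.'' As written, that is a genuine gap, not a routine verification: a Delaunay decomposition of an integral flat cone sphere produces cells that need not be the desired regular $m$-gons, and you would still have to show how to refine it into the standard tiling and how the bicoloring constraint for $k=3$ interacts with it.

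The paper closes exactly this gap by a more elementary device that your outline skips: cut the flat cone sphere open along shortest geodesics from a chosen cone point to the others, so that (by Proposition 7.1 of \cite{thurston}) it develops onto an embedded planar polygon $\mathcal{P}_M$ whose edge vectors $(v_1,\dots,v_{2k-2})$ give the orbifold chart. If these coordinates lie in $\Z[\zeta_k]$, then all vertices of $\mathcal{P}_M$ lie in a single translate of $\Z[\zeta_k]$, so one simply intersects $\mathcal{P}_M$ with the standard planar tiling containing those vertices; the tiling descends to $M$ because the edge identifications $w_j=-\zeta_k v_j$ are lattice rotations preserving the planar tiling (for $k=3$, preserving the bicolored hexagon tiling with black vertices $\Z[\zeta_3]$, which is precisely why Convention \ref{convention} requires the cone points to be black). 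The same device disposes of your second worry about coalesced configurations: the correspondence persists on the closure of each polygon chart, and these closed charts cover $\overline{\mathcal{M}}^{rig}_{2\pi/k,\dots,2\pi/k}$, so no separate compatibility of strata-by-strata lattices is needed. Finally, your assertion that ``two tilings give the same orbit exactly when they are combinatorially equivalent'' needs an argument; the paper gets injectivity and surjectivity of the correspondence on the nose from Thurston's Theorem 4.1 (the projectivized period map is an isomorphism of orbifolds) together with a check that the unprojectivized map is an isomorphism on each $\C^*/\langle\zeta_k\rangle$-fiber.
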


\begin{proof} We follow Section 7 of \cite{thurston}. Define a rigidified moduli space $\overline{\mathcal{M}}_{\frac{2\pi}{k}, \dots,\frac{2\pi}{k}}^{rig}$ to be the space of pairs $(M,\Xi)$ where $M\in \overline{\mathcal{M}}_{\frac{2\pi}{k}, \dots,\frac{2\pi}{k}}$ and $\Xi$ is a flat sublattice of the tangent bundle of $M\backslash\{p_1,\dots,p_n\}$ which is locally isometric to $\Z[\zeta_k]\subset \C$ with the standard metric. Let $\mathcal{M}_{\frac{2\pi}{k}, \dots,\frac{2\pi}{k}}^{rig}$ be open stratum where no singularities have collided. When $k=3$, we include with the data of $\Xi$ one of the two choices of hexagonal tiling of the tangent space whose black vertices are $\Xi$. A convex tiling of $S^2$ by $\frac{2k}{k-2}$-gons determines an element $(M,\Xi)$ by declaring $\Xi_p$ to be the lattice of differences of vertices (in the case hexagon-tilings, differences of black vertices).

We first define the lattice $\Lambda$ based on a number of choices. Choose a point $(M,\Xi)$ in the open stratum and a singularity $p_{2k}$. For $j=1,\dots,2k-1$, let $\gamma_j$ be the shortest path connecting $p_{2k}$ to $p_j$, making a choice if necessary. Relabel the singularities so that as $j$ increases, the $\gamma_j$ are cyclically ordered about $p_{2k}$, in clockwise order. The $\gamma_j$ are straight lines in the flat structure which intersect only at the endpoint $p_{2k}$. Finally, choose an isometric trivialization of $\Xi$ to $\Z[\zeta_k]\subset \C$ over the complement of $\bigcup_j\gamma_j$. When $k=3$, we require this trivialization to send the tiling of the tangent space to the standard tiling of $\C$, see Figure \ref{bihex}. Having made these choices, there is, up to translation, a unique isometric immersion $$M-\textstyle\bigcup_j\gamma_j\rightarrow \C$$ compatible with the trivialization of $\Xi$. By Proposition 7.1 of \cite{thurston}, this immersion is an embedding whose image is the interior of a polygon $\mathcal{P}_M$.

Each path $\gamma_j$ corresponds to a pair of adjacent edges $v_j$, $w_j$ of $\mathcal{P}_M$.  Gluing $v_j$ to $w_j$ by an oriented isometry of $\C$ reproduces $(M,\Xi)$, see Figure \ref{square}. Each directed edge $v_j$ or $w_j$ can be viewed as an element of $\C$. The following equations are satisfied: \begin{enumerate}
\item[(1)] $w_j=-\zeta_kv_j$ and \vspace{2pt}
\item[(2)] $\sum_j(v_j+w_j)=0.$
\end{enumerate}

\begin{figure}
\includegraphics[width=2.5in]{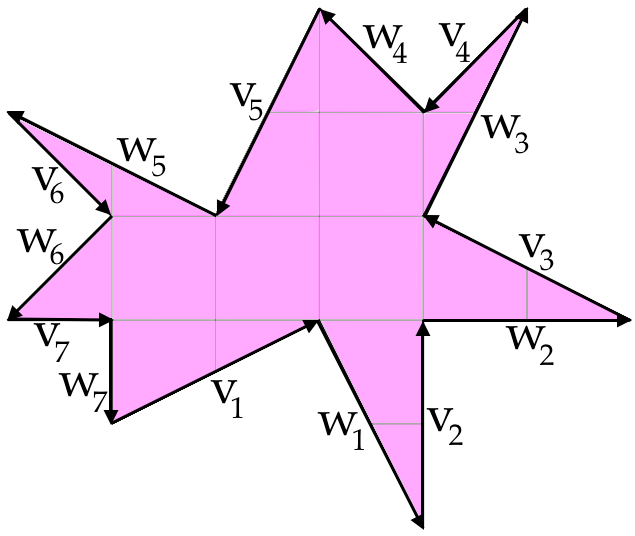}
\caption{A flat cone sphere admitting a square-tiling with cone angle deficits $2\pi/4$ in which $v_i$ and $w_i$ are identified by a rotation.}
\label{square}
\end{figure}

\begin{figure}
\includegraphics[width=1.9in]{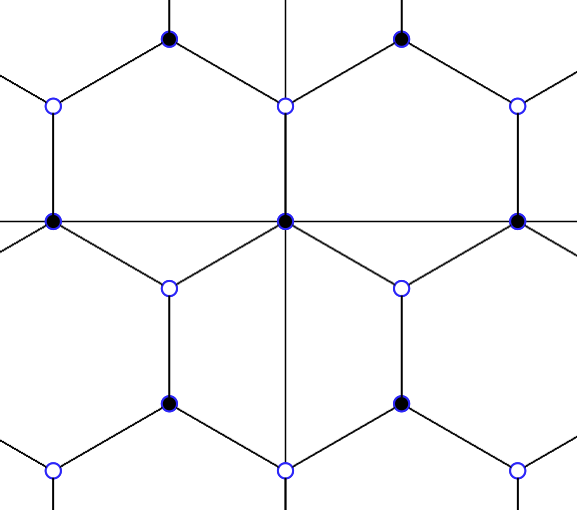}
\caption{The standard tiling of $\C$ by bicolored hexagons with black vertices in $\Z[\zeta_3]$.}
\label{bihex}
\end{figure}

Equation (1) holds because the cone angle deficit at $p_j$ is $\alpha_j$, hence $v_j$ and $w_j$ meet at a vertex of $\mathcal{P}_M$ with exterior angle $\alpha_j$. Equation (2) holds because the vectors $v_j$ and $w_j$ put end-to-end must close up to form $\mathcal{P}_M$. By (1) and (2), the vectors $v_1,\dots,v_{2k-2}$ determine $\mathcal{P}_M$ uniquely. A neighborhood of $$(v_1,\dots,v_{2k-2})\in \C^{2k-2}$$ is an orbifold chart on $\mathcal{M}_{\frac{2\pi}{k}, \dots,\frac{2\pi}{k}}^{rig}$. A small deformation of $(v_1,\dots,v_{2k-2})$ defines a small deformation of the polygon $\mathcal{P}_M$, which in turn defines a small deformation of $(M,\Xi)$.

The area of $\mathcal{P}_M$ is a quadratic form in $(v_1,\dots,v_{2k-2})$. By Proposition 3.3 of \cite{thurston}, the area extends naturally to a Hermitian form $A$ whose signature is $(1,2k-3)$. We define $$\Lambda:=\Z[\zeta_{k}]^{2k-2}\subset (\C^{2k-2},A).$$ In the local coordinate chart on $\mathcal{M}_{\frac{2\pi}{k}, \dots,\frac{2\pi}{k}}^{rig}$ defined by $(v_1,\dots,v_{2k-2})$, elements of $\Lambda$ correspond to tilings of the flat cone sphere by intersecting $\mathcal{P}_M$ with the unique translate of the planar tiling by $\frac{2k}{k-2}$-gons whose vertices include the vertices of $\mathcal{P}_M$. This correspondence holds even in the closure of the chart, as $\mathcal{P}_M$ degenerates. These closed charts cover $\overline{\mathcal{M}}^{rig}_{\frac{2\pi}{k},\dots,\frac{2\pi}{k}}$. 

It remains to prove that the construction of $\Lambda$ is independent of choice of coordinate chart. Choosing the data of a labelling of the singularities, geodesics $\gamma_j$, and a trivialization of $\Xi$, we have produced an identification \begin{align*} H^1(M,\Xi\otimes \R) & \cong \C^{2k-2} \\ H^1(M,\Xi) & \cong \Lambda. \end{align*} Another choice of coordinate chart will therefore preserve $\Lambda$. The area of $(M,\Xi)$ is the same regardless of the chart, so this automorphism also preserves the Hermitian form $A$. Alternatively, we may note that $A$ is a rescaling of the cup product on $H^1(M,\Xi)$ induced by the Hermitian pairing on $\Xi$ with values in $\Z[\zeta_k]$. Therefore the transition function between two coordinate charts lies in $U(\Lambda)$. Furthermore, since the cup product on $H^1(M,\Xi)$ is valued in $\Z[\zeta_k]$, there is a rescaling of $A$ which induces on $\Lambda$ the structure of a lattice over $\Z[\zeta_k]$.

Let $\Gamma\subset U(\Lambda)$ be the monodromy group of the principal stratum. Then there is a map $$D\,:\,\overline{\mathcal{M}}_{\frac{2\pi}{k}, \dots,\frac{2\pi}{k}}^{rig}\rightarrow \Gamma\backslash\mathcal{C}^+$$ where $\mathcal{C}^+\subset \C^{1,2k-3}$ is the positive cone. 
 It follows from Theorem 4.1 of \cite{thurston} that the induced map on the projectivization $$\mathbb{P}D\,:\,\mathbb{P}\overline{\mathcal{M}}_{\frac{2\pi}{k}, \dots,\frac{2\pi}{k}}^{rig}=\overline{\mathcal{M}}_{\frac{2\pi}{k}, \dots,\frac{2\pi}{k}}\rightarrow \mathbb{P}\Gamma\backslash \mathbb{CH}^{2k-3}$$ is an isomorphism of complex-hyperbolic orbifolds. The fibers of the projectivization map are isomorphic on both sides to $\C^*/\langle \zeta_k\rangle$. Since $D$ is an isomorphism on an individual fiber and $\mathbb{P}D$ is an isomorphism, so is $D$. We have shown the map $D$ identifies convex tilings of the sphere with $\Gamma\backslash \Lambda^+$.\end{proof}

\begin{proposition}\label{weight1} Let $v\in \Gamma\backslash \Lambda^+$ correspond to a convex tiling $\mathcal{T}$ of the sphere with cone angle deficits $\{2\pi \mu_i/k\}$. Let $\textrm{Aut}^+(\mathcal{T})$ denote the group of oriented isomorphisms of the tiling $\mathcal{T}$. Then $$|\textrm{Stab}_\Gamma(v)|=|\textrm{Aut}^+(\mathcal{T})|\prod_i \frac{\mu_i!}{(1-\mu_i/k)^{\mu_i-1}}.$$ \end{proposition}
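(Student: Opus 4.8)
The plan is to identify $\mathrm{Stab}_\Gamma(v)$ with the orbifold isotropy group of the point of $\overline{\mathcal{M}}^{rig}_{\frac{2\pi}{k},\dots,\frac{2\pi}{k}}$ determined by $\mathcal{T}$, and then to compute that group by separating the honest symmetries of $\mathcal{T}$ from the contribution of the metric completion along the stratum in which the cone points of $\mathcal{T}$ have collided. First I would invoke the isomorphism of orbifolds $D : \overline{\mathcal{M}}^{rig}_{\frac{2\pi}{k},\dots,\frac{2\pi}{k}} \to \Gamma\backslash\mathcal{C}^+$ from the proof of Proposition \ref{stuff}. The tiling $\mathcal{T}$ gives a point $x=[(M,\Xi)]$, where $M$ is the flat cone sphere obtained by making each tile regular and $\Xi$ is the lattice of differences of vertices of $\mathcal{T}$ (for $k=3$: differences of black vertices, together with the induced bicoloring of the tangent planes). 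By construction $D(x)=\Gamma v$, and since $D$ is an isomorphism of orbifolds, $\mathrm{Stab}_\Gamma(v)$ is exactly the isotropy group of $x$. It therefore suffices to compute this isotropy group.

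Next I would use Thurston's description of the completion near a stratum of collided cone points (Theorem 3.4 of \cite{thurston}). The point $x$ lies in the stratum where the $2k$ cone points of a generic surface have merged into groups of sizes $\mu_1,\mu_2,\dots$, so that the surviving cone points have angle deficits $2\pi\mu_i/k$. Near $x$, the completion is locally modeled, as an orbifold, on the product of a neighborhood of $x$ in its stratum with one transverse slice $\widehat{\mathcal{M}}_i$ per merged group, where $\widehat{\mathcal{M}}_i$ is the (rigidified) local moduli space of a flat cone metric on a disk carrying $\mu_i$ interior cone points of deficit $2\pi/k$, and where $x$ corresponds to the tuple of central points. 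As the merged groups lie at distinct points of $M$, the slices are independent, so the isotropy group of $x$ sits in a short exact sequence
$$1\longrightarrow \prod_i G_i\longrightarrow \mathrm{Stab}_\Gamma(v)\longrightarrow \mathrm{Aut}^+(M,\Xi)\longrightarrow 1,$$
where $G_i$ is the isotropy of the central point of $\widehat{\mathcal{M}}_i$ and $\mathrm{Aut}^+(M,\Xi)$ is the isotropy of $x$ within its stratum, i.e.\ the group of orientation-preserving isometries of $(M,\Xi)$. This last group equals $\mathrm{Aut}^+(\mathcal{T})$: the cone points of $M$ are vertices of $\mathcal{T}$ and anchor the lattice $\Xi$, so a $\Xi$-preserving isometry carries vertices of $\mathcal{T}$ to vertices of $\mathcal{T}$ and is a tiling automorphism, and conversely every automorphism of $\mathcal{T}$ is such an isometry.

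Finally I would show $|G_i|=\mu_i!\,/(1-\mu_i/k)^{\mu_i-1}$ and take the product over $i$. By (the analogue for the disk of) Proposition 3.3 of \cite{thurston}, the area of the developed disk extends to a Hermitian form on the space of local configurations, making $\widehat{\mathcal{M}}_i$ a complex hyperbolic cone orbifold over which $\Xi$ induces a $\Z[\zeta_k]$-lattice; the group $G_i$ is then the isotropy at its cone point, built out of the $\mu_i!$ permutations of the merged cone points together with the monodromy of the local developing map about the stratum, which is a rotation through the cone angle $2\pi(1-\mu_i/k)$ of the limiting cone point. Carrying out this local computation — determining $|G_i|$ in terms of the discriminant of the induced $\Z[\zeta_k]$-lattice, which is where the factor $(1-\mu_i/k)^{-(\mu_i-1)}$ enters — then yields the stated identity via the exact sequence above. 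I expect this last, purely local, computation of $|G_i|$ to be the main obstacle; the reduction to it through Proposition \ref{stuff} and the product structure of the completion is essentially formal.
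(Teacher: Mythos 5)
Your reduction is essentially the paper's: both identify $\mathrm{Stab}_\Gamma(v)$ with the local orbifold group of the corresponding point of $\overline{\mathcal{M}}^{rig}_{\frac{2\pi}{k},\dots,\frac{2\pi}{k}}\cong\Gamma\backslash\mathcal{C}^+$ via Proposition \ref{stuff}, and both obtain the same exact sequence $1\to\prod_i G_i\to\mathrm{Stab}_\Gamma(v)\to\mathrm{Aut}^+(\mathcal{T})\to 1$ from the local product structure of the completion along the stratum of collided cone points. The one step you defer --- the identity $|G_i|=\mu_i!/(1-\mu_i/k)^{\mu_i-1}$ --- is exactly the quantitative heart of the proposition, and the paper does not derive it from the discriminant of a local $\Z[\zeta_k]$-lattice as you propose; it imports it from the discussion following Theorem 4.1 of \cite{thurston}, where $G_i$ is identified with the finite complex reflection group generated by the images of the braids permuting the $\mu_i$ collided singularities (cf.\ Remark \ref{index1}), whose order is computed there. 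So your route is sound and matches the paper's up to that citation, but as written the formula for $|G_i|$ remains an acknowledged gap rather than a proved step, and your suggested discriminant computation is not obviously the right mechanism for closing it.
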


\begin{proof} By Proposition \ref{stuff}, the tiling $\mathcal{T}$ defines a point in $\overline{\mathcal{M}}^{rig}_{\frac{2\pi}{k}, \dots,\frac{2\pi}{k}}\cong \Gamma\backslash \mathcal{C}^+$. Therefore, the stabilizer $\textrm{Stab}_\Gamma(v)$ is isomorphic to the local orbifold group at this point. The projectivization map $$\overline{\mathcal{M}}^{rig}_{\frac{2\pi}{k}, \dots,\frac{2\pi}{k}}\rightarrow \overline{\mathcal{M}}_{\frac{2\pi}{k}, \dots,\frac{2\pi}{k}}$$ defines an isomorphism of local orbifold groups at any point, because the kernel of $\Gamma\rightarrow \mathbb{P}\Gamma$ acts freely on $\mathcal{C}^+$. For each cone point of $\mathcal{T}$ with cone angle deficit $2\pi \mu_i/k$, the local orbifold group at $[\mathcal{T}]\in \overline{\mathcal{M}}_{\frac{2\pi}{k}, \dots,\frac{2\pi}{k}}$ contains a normal subgroup $\Gamma_i$ of order $$|\Gamma_i|=\frac{\mu_i!}{(1-\mu_i/k)^{\mu_i-1}}$$ coming from braiding the $\mu_i$ singularities which have collided. Moreover, the discussion after Theorem 4.1 of \cite{thurston} shows that there is an exact sequence $$0\rightarrow \prod_i\Gamma_i\rightarrow \textrm{Stab}_\Gamma(v)\rightarrow \textrm{Aut}^+(\mathcal{T})\rightarrow 0.$$ \end{proof}

\begin{definition}\label{weight} The {\it weight} of a tiling $\mathcal{T}$ whose deficits form the partition $\mu=(\mu_1,\dots,\mu_n)$ of $2k$ is $$wt(\mathcal{T})=\frac{1}{|\textrm{Aut}^+(\mathcal{T})|}\prod_{i=1}^n \frac{(1-\mu_i/k)^{\mu_i-1}}{\mu_i!}=\frac{1}{|\textrm{Stab}_\Gamma(v)|}.$$ \end{definition} Note that the generic weight---when none of the singularities have collided and $\mathcal{T}$ has no nontrivial automorphisms---is one.

\section{The Hermitian Form on $\Lambda$}

\begin{figure}
\includegraphics[width=2.7in]{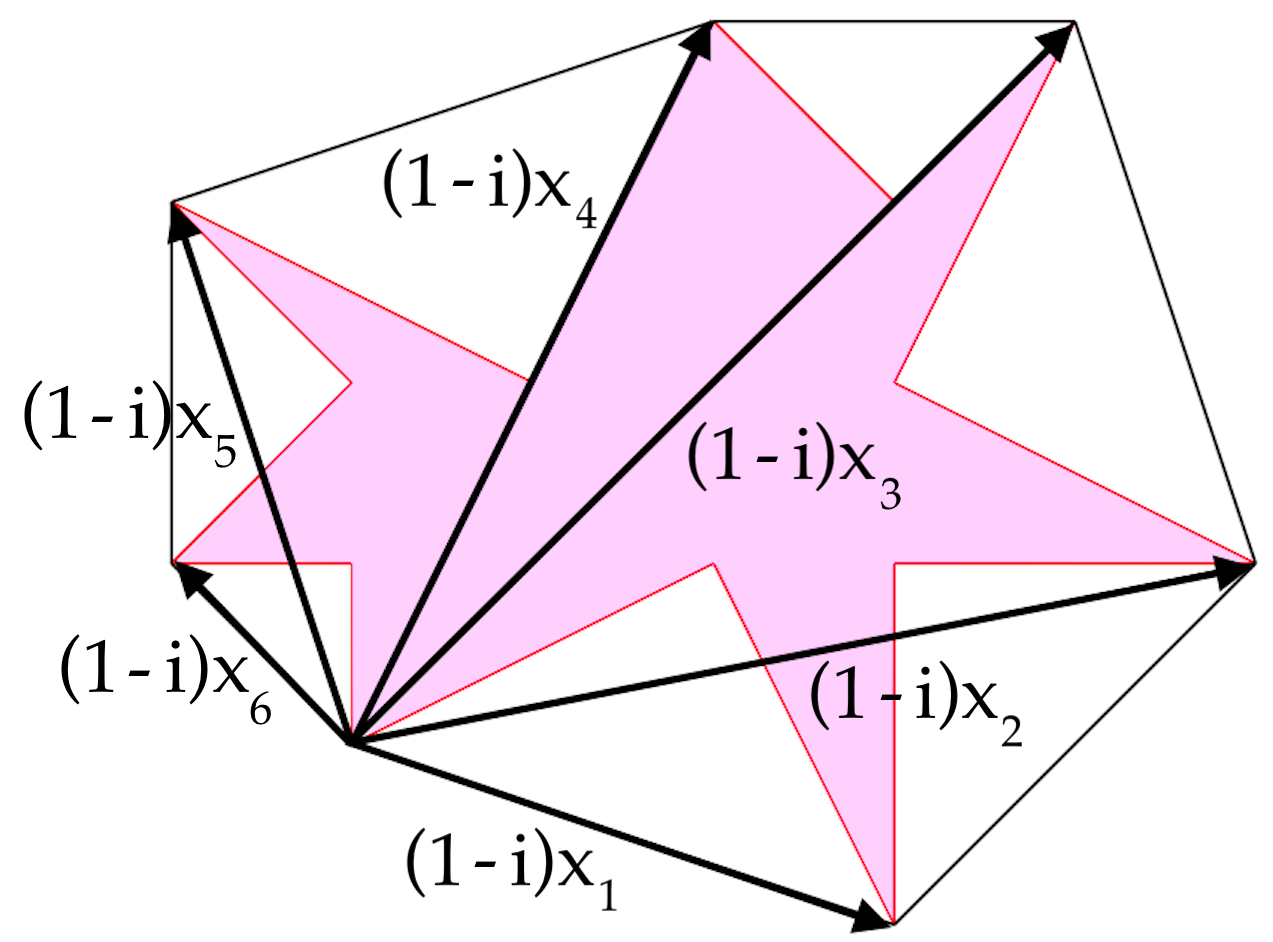}
\caption{The vectors $(1-i)x_i$. The shaded region is $\mathcal{P}_M$.}
\label{area}
\end{figure}

We now determine the Hermitian form $A$ explicitly by writing the area of $M$ as a function of $(v_1, \dots, v_{2k-2})$. The Gram matrix is simpler if we change the basis of $\Lambda$ by working in the coordinates $x_i = \sum_{j \leq i} v_j$ for $i=1,\dots,2k-2$. Equation (1) in the proof of Proposition \ref{stuff} implies $$(1 - \zeta_k)x_i = \sum_{j \leq i} v_j + w_j.$$ The area of $M$ is the area of the convex hull of $\mathcal{P}_M$ minus the sum of the areas of the triangles with edges $v_i$ and $w_i$, cf. Section 7 of \cite{thurston}. See Figure \ref{area}. Since a triangle with two edges $z_1,z_2\in\C$ has area $\frac{1}{2}\textrm{Im}(\overline{z}_1 z_2)$, the area of the convex hull is
\[
\sum_{i=1}^{2k-3}\frac{1}{2}\mathrm{Im}\left(\overline{(1- \zeta_k)x_i}\cdot(1-\zeta_k)x_{i+1}\right) = \sum_{i=1}^{2k-3}\frac{|1-\zeta_k|^2}{2}\mathrm{Im}(\overline{x}_ix_{i+1})
\]
Setting $x_0 = 0$, we have $v_i = x_i - x_{i-1}$ for all $i$. Using again $w_j = -\zeta_k v_j$, the area of the triangle whose edges are $v_i$ and $w_i$ is
\[
-\frac{1}{2}\mathrm{Im}(\overline{v}_iw_i) = \frac{1}{2}\mathrm{Im}((\overline{x_i - x_{i-1}})\cdot\zeta_k(x_i - x_{i-1})).
\]
Therefore the area of $M$ is $\frac{1}{2}\mathrm{Im}(\sum_{i,j} \overline{x}_i B_{ij} x_j)$ where the matrix $B$ is
\[
\begin{pmatrix} - 2 \zeta_k & \zeta_k & 0 & \dots & 0 \\ \zeta_k & - 2 \zeta_k & \zeta_k  & \dots & 0 \\  0 & \zeta_k & -2\zeta_k   & \dots & 0 \\ \vdots & \vdots & \vdots & & \vdots \\ 0 & 0 & 0 & \dots & -2\zeta_k \end{pmatrix} + \begin{pmatrix} 0 & |1-\zeta_k|^2  & 0 & \dots & 0 \\ 0 & 0 & |1-\zeta_k|^2 & \dots & 0 \\  0 & 0 & 0  & \dots & 0 \\ \vdots & \vdots & \vdots & & \vdots \\ 0 & 0 & 0 & \dots & 0 \end{pmatrix}
\]

\begin{proposition}\label{intersection} Let $\epsilon_k:=(1+\zeta_k)^{-1}.$ The Gram matrix of $A$ in the coordinates $(x_1,\dots,x_{2k-2})$ is $$\textrm{Im}\,\zeta_k\begin{pmatrix} -1 & \epsilon_k & 0 & \dots & 0 \\ \overline{\epsilon}_k & -1 & \epsilon_k  & \dots & 0 \\  0 & \overline{\epsilon}_k & -1   & \dots & 0 \\ \vdots & \vdots & \vdots & & \vdots \\ 0 & 0 & 0 & \dots & -1 \end{pmatrix}.$$ \end{proposition}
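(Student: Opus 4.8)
The plan is to read off the Hermitian form $A$ directly from the formula for the area of $M$ just derived, using that a Hermitian form on $\C^{n}$ is uniquely determined by the real-valued function $v\mapsto A(v,v)$ it induces. Since $x\mapsto\tfrac{1}{2}\mathrm{Im}(\overline{x}^{t}Bx)$ is homogeneous of weight $|\lambda|^{2}$ under $x\mapsto\lambda x$, it equals $A(x,x)$ for a unique Hermitian form $A$, and it suffices to compute the Gram matrix of this $A$ in the coordinates $(x_{1},\dots,x_{2k-2})$.

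I would first record the general fact that for any square matrix $B$ one has
\[
\mathrm{Im}\big(\overline{x}^{t}Bx\big)=\overline{x}^{t}\Big[\tfrac{-i}{2}\big(B-\overline{B}^{t}\big)\Big]x,
\]
with $\tfrac{-i}{2}(B-\overline{B}^{t})$ Hermitian: writing $B=H_{1}+H_{2}$ with $H_{1}=\tfrac12(B+\overline{B}^{t})$ Hermitian and $H_{2}=\tfrac12(B-\overline{B}^{t})$ skew-Hermitian, $\overline{x}^{t}H_{1}x$ is real and $\overline{x}^{t}H_{2}x$ is purely imaginary, so $\mathrm{Im}(\overline{x}^{t}Bx)=-i\,\overline{x}^{t}H_{2}x$, and $-iH_{2}$ is Hermitian because $H_{2}$ is skew-Hermitian. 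Combined with the area formula this shows that the Gram matrix of $A$ is $\tfrac{-i}{4}(B-\overline{B}^{t})$, reducing the proposition to an explicit computation.

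That computation uses only $|\zeta_{k}|=1$. This identity gives $|1-\zeta_{k}|^{2}=2-\zeta_{k}-\overline{\zeta_{k}}$, so the superdiagonal entries $\zeta_{k}+|1-\zeta_{k}|^{2}$ of $B$ equal $2-\overline{\zeta_{k}}$; hence $B-\overline{B}^{t}$ is tridiagonal with diagonal $-2(\zeta_{k}-\overline{\zeta_{k}})=-4i\,\mathrm{Im}\,\zeta_{k}$, superdiagonal $(2-\overline{\zeta_{k}})-\overline{\zeta_{k}}=2-2\overline{\zeta_{k}}$, and subdiagonal $\zeta_{k}-(2-\zeta_{k})=2\zeta_{k}-2$. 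Multiplying by $\tfrac{-i}{4}$ turns the diagonal into $-\mathrm{Im}\,\zeta_{k}$, the superdiagonal into $\tfrac{-i}{2}(1-\overline{\zeta_{k}})$, and the subdiagonal into its conjugate $\tfrac{-i}{2}(\zeta_{k}-1)$. Using $|\zeta_{k}|=1$ once more, $(1-\overline{\zeta_{k}})(1+\zeta_{k})=\zeta_{k}-\overline{\zeta_{k}}=2i\,\mathrm{Im}\,\zeta_{k}$, so the superdiagonal entry is $\tfrac{-i}{2}\cdot\frac{2i\,\mathrm{Im}\,\zeta_{k}}{1+\zeta_{k}}=\frac{\mathrm{Im}\,\zeta_{k}}{1+\zeta_{k}}=\mathrm{Im}(\zeta_{k})\,\epsilon_{k}$, and the subdiagonal entry is $\mathrm{Im}(\zeta_{k})\,\overline{\epsilon_{k}}$. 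This is the claimed Gram matrix.

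I do not expect a genuine obstacle: the substance is the area formula already in hand, and what remains is bookkeeping. The two places to be careful are the passage from ``$A(x,x)=\tfrac12\mathrm{Im}(\overline{x}^{t}Bx)$'' to the Hermitian \emph{matrix}, which must go through the skew-Hermitian part $\tfrac{-i}{2}(B-\overline{B}^{t})$ and not through a symmetrization of $B$, and the tracking of the scalar $\tfrac{-i}{4}$ together with its sign. As a consistency check the form should have signature $(1,2k-3)$ as in Proposition~\ref{stuff}; since $\mathrm{Im}\,\zeta_{k}>0$ this amounts to the $(2k-2)\times(2k-2)$ Hermitian tridiagonal matrix with diagonal $-1$ and off-diagonal $\epsilon_{k}$, whose eigenvalues are $-1+2|\epsilon_{k}|\cos\!\big(\tfrac{j\pi}{2k-1}\big)$ for $j=1,\dots,2k-2$, having exactly one positive eigenvalue, which holds because $2|\epsilon_{k}|\cos\!\big(\tfrac{\pi}{2k-1}\big)>1>2|\epsilon_{k}|\cos\!\big(\tfrac{2\pi}{2k-1}\big)$ for $k=3,4,6$.
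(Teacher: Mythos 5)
Your proposal is correct and follows the paper's proof exactly: the paper likewise extracts the Gram matrix as $A=\frac{B-B^{\dagger}}{4i}$ (your $\tfrac{-i}{4}(B-\overline{B}^{t})$) and then appeals to direct computation, which you have simply carried out in full, with the right entries $-\mathrm{Im}\,\zeta_k$ and $\mathrm{Im}(\zeta_k)\,\epsilon_k$.
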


\begin{proof} 
The Hermitian inner product is given by $A=\frac{B - B^\dagger}{4i}$. The proposition follows from direct computation.
\end{proof} 

\begin{definition}\label{scalings} Define two Hermitian inner products $\star$ and $\bullet$ on $\Lambda$ by \begin{align*} x\star y&:= \frac{|1+\zeta_k|^2}{\textrm{Im}\,\zeta_k}A(x, y) \\ x\bullet y&:=\frac{2}{|1+\zeta_k|^2}(x\star y)\end{align*} Define the real inner product $\cdot$ on $\Lambda$ by $$x\cdot y:=\textrm{Re}\,(x\bullet y).$$ \end{definition}

By Proposition \ref{intersection}, $\star$ endows $\Lambda$ with the structure of a Hermitian lattice over $\Z[\zeta_k]$. The real inner product $\cdot$ endows $\Lambda$ with the structure of an even $\Z$-lattice---by comparing with the area of a fundamental tile, $x \cdot x$ is the number of triangles, and twice the number of squares or hexagons. The Hermitian product $\bullet$ is only used to aid in computations in Section 4.

%

\begin{proposition}\label{dual} Let $\Lambda^\vee$ be the Hermitian dual of $\Lambda$ with respect to $\star$. Then $\Lambda=(1+\zeta_k)\Lambda^\vee.$ \end{proposition}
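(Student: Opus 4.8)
The plan is to reduce the statement to one determinant computation over $\Z[\zeta_k]$. By Proposition~\ref{intersection} together with Definition~\ref{scalings}, the Gram matrix $H$ of $\star$ in the coordinates $(x_1,\dots,x_{2k-2})$ is $|1+\zeta_k|^2$ times the tridiagonal matrix with $-1$ on the diagonal, $\epsilon_k$ on the superdiagonal, and $\overline{\epsilon}_k$ on the subdiagonal. Since $H$ is Hermitian, the Hermitian dual is $\Lambda^\vee=H^{-1}\Lambda$ inside $\Lambda\otimes\Q(\zeta_k)$, so the assertion $\Lambda=(1+\zeta_k)\Lambda^\vee$ is equivalent to $(1+\zeta_k)H^{-1}\in GL_{2k-2}(\Z[\zeta_k])$. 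A matrix with entries in $\Z[\zeta_k]$ lies in $GL_{2k-2}(\Z[\zeta_k])$ exactly when its determinant is a unit, so it suffices to prove that the inverse matrix
\[
N:=\frac{1}{1+\zeta_k}\,H
\]
has entries in $\Z[\zeta_k]$ and unit determinant.

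A direct computation identifies $N$ as the tridiagonal matrix with diagonal entry $-(1+\overline{\zeta}_k)$, superdiagonal entry $(1+\overline{\zeta}_k)\epsilon_k=\tfrac{1+\overline{\zeta}_k}{1+\zeta_k}=\overline{\zeta}_k$ (using $\overline{\zeta}_k(1+\zeta_k)=\overline{\zeta}_k+1$), and subdiagonal entry $(1+\overline{\zeta}_k)\overline{\epsilon}_k=1$; in particular all entries lie in $\Z[\zeta_k]$.

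It remains to compute $\det N$, which is the crux. Its leading principal minors $\delta_m$ obey the standard three-term recursion for tridiagonal matrices, $\delta_m=-(1+\overline{\zeta}_k)\delta_{m-1}-\overline{\zeta}_k\,\delta_{m-2}$, with $\delta_0=1$ and $\delta_1=-(1+\overline{\zeta}_k)$. The key observation is that the characteristic polynomial $t^2+(1+\overline{\zeta}_k)t+\overline{\zeta}_k$ factors as $(t+1)(t+\overline{\zeta}_k)$, with distinct roots since $\zeta_k\neq 1$; solving the recursion gives $\delta_m=(-1)^m\tfrac{1-\overline{\zeta}_k^{\,m+1}}{1-\overline{\zeta}_k}$. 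Taking $m=2k-2$ and using $(-1)^{2k-2}=1$ and $\overline{\zeta}_k^{\,2k-1}=\overline{\zeta}_k^{\,-1}=\zeta_k$ yields
\[
\det N=\delta_{2k-2}=\frac{1-\zeta_k}{1-\overline{\zeta}_k}=-\zeta_k,
\]
a unit in $\Z[\zeta_k]$. Hence $(1+\zeta_k)H^{-1}=N^{-1}$ lies in $GL_{2k-2}(\Z[\zeta_k])$, and therefore $\Lambda=(1+\zeta_k)H^{-1}\Lambda=(1+\zeta_k)\Lambda^\vee$. The only genuine obstacle is recognizing the factorization of the characteristic polynomial (equivalently, guessing the closed form for $\delta_m$); absent that, one simply checks $\det N=-\zeta_k$ case by case for $2k-2\in\{4,6,10\}$, which is routine. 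The convention for which argument of $\star$ is conjugate-linear enters only by replacing $H^{-1}$ with its transpose in the formula $\Lambda^\vee=H^{-1}\Lambda$, which changes neither integrality nor the determinant.
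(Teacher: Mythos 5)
Your proof is correct and follows essentially the same route as the paper: both arguments rest on the observation that every entry of the Gram matrix of $\star$ is divisible by $1+\zeta_k$ together with the computation that its determinant is $(1+\zeta_k)^{2k-2}$ times a unit (the paper phrases the conclusion as an index count in $\Lambda^\vee$ rather than as membership of $N$ in $GL_{2k-2}(\Z[\zeta_k])$, but this is only a difference in bookkeeping). A small bonus of your write-up is that it actually carries out, via the three-term recursion and the factorization $(t+1)(t+\overline{\zeta}_k)$, the tridiagonal determinant evaluation that the paper dispatches with ``an inductive argument shows.''
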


\begin{proof} Let $G$ be the Gram matrix of $(\Lambda,\star)$. By Proposition \ref{intersection} and Definition \ref{scalings}, every entry of $G$ is divisible by $1+\zeta_k$. Therefore $\Lambda\subset (1+\zeta_k)\Lambda^\vee$.  An inductive argument shows that $\det(G) = -|1+\zeta_k|^{2k-2}$. The index of $\Lambda$ in $\Lambda^\vee$ is the square norm of $\det(G)$ and $(1+\zeta_k)\Lambda^\vee$ already has index $|1+\zeta_k|^{4k-4}$ in $\Lambda^\vee$. Thus we conclude $\Lambda=(1+\zeta_k)\Lambda^\vee$. \end{proof}

\begin{definition} A Hermitian lattice $\Lambda$ over $\Z[\zeta_k]$ is {\it $\alpha$-modular} if $\Lambda=\alpha\Lambda^\vee$. \end{definition}

\begin{proposition}\label{dual2} Let $\Lambda$ be a $(1+\zeta_k)$-modular lattice over $\Z[\zeta_k]$. Define $x\cdot y :=\frac{2}{|1+\zeta_k|^2}\textrm{Re}(x\star y)$ as in Definition \ref{scalings} and let $\Lambda^*$ denote the dual of $\Lambda$ with respect to $\cdot$. Then $\Lambda=(1-\zeta_k)\Lambda^*.$ In particular, $(\Lambda,\cdot)$ is an even unimodular lattice when $k=6$.\end{proposition}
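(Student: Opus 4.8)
The plan is to rewrite the $\Z$-bilinear form $\cdot$ as a trace of the Hermitian form $\star$, and then to identify the $\Z$-dual $\Lambda^*$ with a rescaling of the Hermitian dual $\Lambda^\vee$ by means of the codifferent of $\Z[\zeta_k]$ over $\Z$. Since $\Q(\zeta_k)$ is imaginary quadratic, $\mathrm{Re}(z)=\tfrac12\mathrm{Tr}_{\Q(\zeta_k)/\Q}(z)$ for $z\in\Q(\zeta_k)$, so Definition~\ref{scalings} gives
\[
x\cdot y=\frac{1}{|1+\zeta_k|^2}\mathrm{Tr}_{\Q(\zeta_k)/\Q}(x\star y)=\mathrm{Tr}_{\Q(\zeta_k)/\Q}\!\left(\frac{x\star y}{(1+\zeta_k)(1+\overline{\zeta_k})}\right).
\]

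The arithmetic fact I would invoke is that $\Z[\zeta_k]$, being generated over $\Z$ by $\zeta_k$, has different equal to the principal ideal $(f'(\zeta_k))=(\zeta_k-\overline{\zeta_k})$, where $f$ is the minimal polynomial of $\zeta_k$; equivalently, the dual of $\Z[\zeta_k]$ for the pairing $(x,y)\mapsto\mathrm{Tr}_{\Q(\zeta_k)/\Q}(xy)$ is the fractional ideal $\tfrac{1}{\zeta_k-\overline{\zeta_k}}\Z[\zeta_k]$ (which may also be checked directly on the $\Z$-basis $\{1,\zeta_k\}$).

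Next I would compute $\Lambda^*$. A vector $v$ of $\Lambda\otimes\Q(\zeta_k)$ lies in $\Lambda^*$ precisely when $\mathrm{Tr}_{\Q(\zeta_k)/\Q}\!\bigl(\tfrac{v\star w}{|1+\zeta_k|^2}\bigr)\in\Z$ for every $w\in\Lambda$. Because $\Lambda$ is a $\Z[\zeta_k]$-module and complex conjugation maps $\Z[\zeta_k]$ onto itself, one may replace $w$ by $\alpha w$ for all $\alpha\in\Z[\zeta_k]$ at once, so this is equivalent to asking that $\tfrac{v\star w}{|1+\zeta_k|^2}$ lie in the trace-dual of $\Z[\zeta_k]$ for every $w\in\Lambda$, i.e.\ $v\star w\in\tfrac{|1+\zeta_k|^2}{\zeta_k-\overline{\zeta_k}}\Z[\zeta_k]$ for all $w\in\Lambda$. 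Hence $\Lambda^*=\tfrac{|1+\zeta_k|^2}{\zeta_k-\overline{\zeta_k}}\Lambda^\vee$. The computation then closes with the elementary identity $(1-\zeta_k)(1+\overline{\zeta_k})=1+\overline{\zeta_k}-\zeta_k-|\zeta_k|^2=\overline{\zeta_k}-\zeta_k$, which gives $\tfrac{|1+\zeta_k|^2}{\zeta_k-\overline{\zeta_k}}=\tfrac{(1+\zeta_k)(1+\overline{\zeta_k})}{-(1-\zeta_k)(1+\overline{\zeta_k})}=\tfrac{1+\zeta_k}{\zeta_k-1}$; substituting the hypothesis $\Lambda=(1+\zeta_k)\Lambda^\vee$ (so $\Lambda^\vee=(1+\zeta_k)^{-1}\Lambda$) yields $\Lambda^*=\tfrac{1}{\zeta_k-1}\Lambda$, that is $\Lambda=(1-\zeta_k)\Lambda^*$.

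For the final assertion, when $k=6$ one has $|1-\zeta_6|=1$, so $1-\zeta_6$ is a unit of $\Z[\zeta_6]$ and therefore $\Lambda=(1-\zeta_6)\Lambda^*=\Lambda^*$, i.e.\ $(\Lambda,\cdot)$ is unimodular; since it is moreover even (as recorded after Proposition~\ref{intersection}, or directly: writing $x=(1+\zeta_6)x'$ with $x'\in\Lambda^\vee$ shows $x\star x\in(1+\zeta_6)\Z[\zeta_6]\cap\Z=3\Z$, hence $x\cdot x=\tfrac23(x\star x)\in2\Z$), it is even unimodular. I do not expect a genuine obstacle: the only point requiring attention is the reduction of the defining integrality condition for $\Lambda^*$ to the ideal membership above, which rests solely on $\Lambda$ being a $\Z[\zeta_k]$-module together with $\overline{\Z[\zeta_k]}=\Z[\zeta_k]$, after which the description of the trace-dual of $\Z[\zeta_k]$ and the one-line identity $(1-\zeta_k)(1+\overline{\zeta_k})=\overline{\zeta_k}-\zeta_k$ finish everything.
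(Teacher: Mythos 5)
Your proof is correct, and it takes a genuinely different route from the paper's. The paper argues in two stages: it first verifies the single containment $\Lambda\subset(1-\zeta_k)\Lambda^*$ by checking integrality of $x\cdot\frac{y}{1-\zeta_k}$, and then forces equality by a volume count, computing $\mathrm{CoVol}(\Lambda,\cdot)=|1-\zeta_k|^{2k-2}$ from the Gram determinant $\det(G)=-|1+\zeta_k|^{2k-2}$ of Proposition \ref{dual} and comparing the index $|1-\zeta_k|^{4k-4}$ of $(1-\zeta_k)\Lambda^*$ in $\Lambda^*$ with the square of the covolume. You instead pin down $\Lambda^*$ exactly in one step: writing $x\cdot y=\mathrm{Tr}_{\Q(\zeta_k)/\Q}\bigl(\tfrac{x\star y}{|1+\zeta_k|^2}\bigr)$ and invoking the inverse different $\tfrac{1}{\zeta_k-\overline{\zeta}_k}\Z[\zeta_k]$ of $\Z[\zeta_k]$, you get $\Lambda^*=\tfrac{|1+\zeta_k|^2}{\zeta_k-\overline{\zeta}_k}\Lambda^\vee$, and the identity $(1-\zeta_k)(1+\overline{\zeta}_k)=\overline{\zeta}_k-\zeta_k$ together with the modularity hypothesis closes the computation (the ambient unit $-1$ arising from which slot of $\star$ is conjugate-linear is harmless). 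Your reduction of the $\Z$-integrality condition to an ideal-membership condition, via replacing $w$ by $\alpha w$, is the one step that needs care and you handle it correctly. What each approach buys: the paper's argument recycles the determinant already computed for Proposition \ref{dual} and needs no facts about differents; yours is independent of the rank and of the explicit Gram matrix, yields both inclusions simultaneously rather than sandwiching, and would transparently generalize to any $\alpha$-modular Hermitian lattice. Your algebraic verification of evenness for $k=6$ (via $x\star x\in(1+\zeta_6)\Z[\zeta_6]\cap\Z=3\Z$) is also a valid replacement for the paper's geometric remark following Definition \ref{scalings}.
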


\begin{proof} We compute \begin{align*} x\cdot \frac{y}{1-\zeta_k} =\frac{2}{|1+\zeta_k|^2}\textrm{Re}\left(x \star \frac{y}{1-\zeta_k}\right) \in \,&\frac{2}{|1+\zeta_k|^2}\textrm{Re}\left(\frac{1+\zeta_k}{1-\zeta_k}\Z[\zeta_k]\right)=\Z.\end{align*} Therefore $\Lambda\subset (1-\zeta_k)\Lambda^*$. To show the reverse containment, we compute the index of $(\Lambda,\cdot)$ in its dual. The co-volume of $(\Lambda,\cdot)$ is \begin{align*} \textrm{CoVol}(\Lambda,\cdot) & =\left(\frac{2}{|1+\zeta_k|^2}\right)^{2k-2}\textrm{CoVol}(\Lambda,\star)  \\ &=\left(\frac{2}{|1+\zeta_k|^2}\right)^{2k-2} |\det(G)|\, \textrm{CoVol}(\Z[\zeta_k]\subset \C)^{2k-2} \\ & = \left(\frac{2}{|1+\zeta_k|^2}\right)^{2k-2}\cdot|1+\zeta_k|^{2k-2}\cdot (\textrm{Im}\,\zeta_k)^{2k-2} \\ &=|1-\zeta_k|^{2k-2}.\end{align*}

The index of a lattice in its dual is the square of its co-volume. The proposition follows, as $(1-\zeta_k)\Lambda^*$ already has index $|1-\zeta_k|^{4k-4}$ in $\Lambda^*$. \end{proof}
%

\begin{proposition}\label{unique} Let $k=6$, $4$, or $3$. There is at most one $(1+\zeta_k)$-modular lattice over $\Z[\zeta_k]$ of indefinite signature $(r,s)$. \end{proposition}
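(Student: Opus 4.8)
The plan is to break the claim into a local determination of the genus and a global step passing from genus to isometry class. For the global step I would invoke the Hasse-type principle valid for \emph{indefinite} Hermitian lattices: writing $E=\Q(\zeta_k)$, the special unitary group $SU(\Lambda)$ is simply connected and, since the signature $(r,s)$ is indefinite (so $r,s\ge 1$ and the rank is $\ge 2$), it is non-compact at the archimedean place; strong approximation therefore holds for it, and together with the fact that $\Z[\zeta_k]$ has class number one for $k=3,4,6$ --- which forces the set of isometry classes in a genus, controlled by the class set of the norm-one torus of $E/\Q$, to be a single element --- this shows that the genus of an indefinite Hermitian $\Z[\zeta_k]$-lattice of rank $\ge 2$ consists of one isometry class. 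I would cite this rather than reprove it. It then suffices to show that a $(1+\zeta_k)$-modular lattice of signature $(r,s)$ has a unique genus.

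To compute the genus I would localize at each rational prime $p$, letting $p_0$ be the unique prime that ramifies in $E/\Q$: so $p_0=3$ for $k=3,6$ and $p_0=2$ for $k=4$. For $p\ne p_0$ the element $1+\zeta_k$ is a unit in the completion, so the hypothesis $\Lambda=(1+\zeta_k)\Lambda^\vee$ says $\Lambda_p$ is unimodular; over a completion of $\Z[\zeta_k]$ that is split or unramified over $\Z_p$ --- the unramified case covering $p=2$ when $k=3,6$, since $2$ is inert in $\Q(\sqrt{-3})$ --- the norm map is surjective on units, so a unimodular Hermitian lattice of rank $n$ is the standard $\langle1\rangle^n$ and $\Lambda_p$ is determined. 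At $p_0$ the completion $\Lambda_{p_0}$ is $(1+\zeta_k)$-modular over the ramified local ring; for $k=3$ this again means unimodular, and for $k=4,6$ it is a single Jordan block at scale $\mathfrak p_0=(1+\zeta_k)$, which in particular forces the rank to be even. In the non-dyadic cases $k=3,6$ such a local lattice is classified by its rank together with a discriminant class in $\Q_{p_0}^\times/N(E_{p_0}^\times)\cong\Z/2$; by the reciprocity law the product over all places of the local discriminant classes of $\Lambda$ is trivial, and this class is automatically trivial at the split finite primes and at the inert finite primes (where $\Lambda_p$ is unimodular) while at $\infty$ it equals the class of $(-1)^s$, so the class at $p_0$ is pinned down. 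Hence $\Lambda_{p_0}$, and therefore the whole genus, is uniquely determined, completing these cases.

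The case $k=4$ is where I expect the real work to lie: $p_0=2$ is wildly ramified in $\Q(i)/\Q$, and by Jacobowitz's classification a Hermitian lattice over $\Z_2[i]$ can carry invariants beyond rank and discriminant, so the reciprocity argument does not by itself settle $\Lambda_2$. Here I would argue directly that a $(1+i)$-modular $\Z_2[i]$-lattice has even rank, admits no unary Jordan component, and is in fact an orthogonal sum of copies of the binary hyperbolic block $(1+i)H$ (with $H$ the hyperbolic plane), hence determined by its rank alone; any potential competing binary block should be eliminated by the same reciprocity input. An alternative, available for both $k=4$ and $k=6$, is to pass to the underlying even $\Z$-lattice $(\Lambda,\cdot)$, which by Proposition~\ref{dual2} is $2$-modular (respectively unimodular) and hence determined among indefinite $\Z$-lattices by its signature and discriminant form (Nikulin), and then to check that the $\zeta_k$-action is unique up to conjugacy in the orthogonal group; I would use whichever of these routes makes the $2$-adic bookkeeping shortest.
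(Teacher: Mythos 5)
Your proposal is correct in outline but takes a genuinely different route from the paper. The paper's proof is essentially a reduction to the literature: for $k=3$ the lattice is unimodular (since $1+\zeta_3$ is a unit) and uniqueness is Theorem 7.1 of Allcock's paper; for $k=6$ and $k=4$ one applies Basak's rescaling argument (Lemma 2.6 of \cite{basak}), which trades a $(1+\zeta_k)$-modular lattice for a unimodular one over the same ring, and then invokes uniqueness of indefinite unimodular Hermitian lattices over $\Z[\zeta_3]$ and of odd indefinite unimodular lattices over $\Z[i]$. You instead unwind the black boxes: strong approximation for the simply connected, archimedean-noncompact group $SU(\Lambda)$ plus class number one for $\Z[\zeta_k]$ to get ``one class per genus,'' followed by a place-by-place determination of the genus. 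This is the machinery that ultimately underlies the cited results, so your approach is more self-contained and makes visible exactly where each hypothesis ($r,s\ge 1$, class number one, the ramification of $p_0$) is used; the paper's approach is shorter and delegates the local bookkeeping entirely. Your local analysis away from the ramified prime, and at $p_0=3$ for $k=3,6$ via the reciprocity/determinant argument, is sound.

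The one genuine soft spot is that the $k=4$ case at $p=2$ is asserted rather than proved. You correctly identify that Jacobowitz's dyadic ramified classification involves invariants beyond rank and discriminant (in particular the norm ideal of each Jordan block), so you do need to actually verify that every $(1+i)$-modular $\Z_2[i]$-lattice of even rank has norm ideal $(2)$ and splits into copies of $(1+i)H$ --- ``any potential competing binary block should be eliminated by the same reciprocity input'' is a hope, not an argument, since reciprocity only sees the discriminant. The alternative you offer (pass to the even $2$-modular $\Z$-lattice, apply Nikulin, then show the $\zeta_4$-action is unique up to conjugacy in the orthogonal group) hides the same difficulty: uniqueness of the $\Z[i]$-structure up to conjugacy is essentially the original statement, not an easier one. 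Also, your passage from one class per special genus (under $SU$) to one class per genus under $U$ needs a sentence verifying that $\det U(\Lambda_p)$ exhausts the local norm-one units at every $p$; this is true here but is not automatic from class number one alone. Neither issue is fatal --- the claims are true and provable by the routes you name --- but as written the $k=4$ local step is a gap you would have to fill.
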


\begin{proof} When $k=3$, the lattice is unimodular, so the result follows from Theorem 7.1 of \cite{allcock2}. An argument of Basak, cf. Lemma 2.6 of \cite{basak}, deduces the $k=6$ case. Similarly, the $k=4$ case follows from Basak's argument, and the uniqueness of odd, indefinite unimodular lattices over $\Z[i]$. \end{proof}

When $k=6$, Allcock \cite{allcock} also identified $(\Lambda,\star)$ as the unique $(1+\zeta_6)$-modular lattice of signature $(1,9)$.

\begin{remark}\label{index1} The Hermitian form for $(\Lambda,\star)$ bears a similarity to the Dynkin diagram for $A_{2k-2}$. Let $(\beta_1,\dots,\beta_{2k-2})$ be the basis of $\Lambda$ associated to the coordinates $(x_1,\dots,x_{2k-2})$. Each $\beta_i$ defines a complex reflection of $\Lambda$: $$r_{\beta}(\lambda)=\lambda-(1+\zeta_k)\frac{\beta\star \lambda}{\beta\star \beta}\beta .$$ The braid group of the singularities admits a representation into the monodromy group $\Gamma$ of the rigidified moduli space, and the braid switching $p_j$ with $p_{j+1}$ maps to the complex reflection $\beta_j$.\end{remark}

When $k=6$, Laza \cite{laza} verified that $\Gamma=U(\Lambda)$ is the Hermitian isometry group of $\Lambda$, using that the monodromy group is generated by the reflections $\beta_j$.  When $k=3$, Allcock \cite{allcock2} sketched a proof that the reflection group $\Gamma\subset U(\Lambda)$ is an index $2$ subgroup whose center is $\langle \zeta_3\rangle$. In the following proposition, we prove that $\Gamma=U(\Lambda)$ when $k=4$, and note that our proof method generalizes to the other two cases. 

\begin{proposition} When $k=6$ or $4$, $\Gamma=U(\Lambda)$. When $k=3$, $\Gamma$ has index $2$ in $U(\Lambda)$. \end{proposition}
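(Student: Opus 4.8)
The plan is to reduce the proposition, uniformly in $k$, to the single equality $\mathbb{P}\Gamma=\mathbb{P}U(\Lambda)$ of projectivized groups; to prove that equality for $k=4$ by running Vinberg's algorithm (the cases $k=6$ and $k=3$ being Laza's \cite{laza} and Allcock's \cite{allcock2}, which the same computation recovers); and then to read off the index $[U(\Lambda):\Gamma]$ from a count of scalar isometries.

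First I would dispose of the scalars. Since every entry of the Gram matrix of $(\Lambda,\star)$ is divisible by $1+\zeta_k$ (Proposition \ref{dual}), each reflection $r_{\beta_j}$ of Remark \ref{index1} preserves $\Lambda$, so $\Gamma=\langle r_{\beta_1},\dots,r_{\beta_{2k-2}}\rangle\subseteq U(\Lambda)$. The kernel of $U(\Lambda)\to\mathbb{P}U(\Lambda)$ is the group of scalar isometries, i.e. the torsion subgroup of $\Z[\zeta_k]^\times$: this is $\langle\zeta_k\rangle$ when $k=4$ or $k=6$, but for $k=3$ it is $\langle\zeta_6\rangle\supsetneq\langle\zeta_3\rangle$, of order $6$. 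On the other hand the kernel of $\Gamma\to\mathbb{P}\Gamma$ consists of the scalars lying in $\Gamma$, and by the rigidification in Proposition \ref{stuff}, whose fibres over $\mathbb{CH}^{2k-3}$ are copies of $\C^*/\langle\zeta_k\rangle$, this kernel is exactly $\langle\zeta_k\rangle$ for every $k$. Hence, granting $\mathbb{P}\Gamma=\mathbb{P}U(\Lambda)$, the index $[U(\Lambda):\Gamma]$ equals the index of $\langle\zeta_k\rangle$ in the torsion units of $\Z[\zeta_k]$, namely $1$ for $k=4,6$ and $2$ for $k=3$; the nontrivial coset for $k=3$ is represented by the scalar $-1$, which one checks exchanges the two bicolorings of Remark \ref{hex and tri}.

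It remains to show $\mathbb{P}\Gamma=\mathbb{P}U(\Lambda)$. Let $R\subseteq U(\Lambda)$ be generated by all complex reflections. Then $R$ is normal, $\mathbb{P}R$ acts simply transitively on the chambers it cuts out in $\mathbb{CH}^{2k-3}$, and for any chamber $P$ one has $\mathbb{P}U(\Lambda)=\mathbb{P}R\rtimes\mathrm{Stab}_{\mathbb{P}U(\Lambda)}(P)$, where $\mathrm{Stab}_{\mathbb{P}U(\Lambda)}(P)$ injects into the automorphism group of the Coxeter diagram of $P$. For $k=4$, where $\Z[\zeta_4]=\Z[i]$ and $\Lambda$ has signature $(1,5)$, I would run Vinberg's algorithm to confirm that the $2k-2$ hyperplanes $\beta_1^\perp,\dots,\beta_{2k-2}^\perp$ bound a finite-volume chamber $P$ through whose interior no mirror of $U(\Lambda)$ passes; that gives $R=\Gamma$, together with an explicit diagram. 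Inspecting the diagram, one sees that none of its symmetries is induced by an isometry of $\Lambda$ (an isometry of $\mathbb{CH}^{2k-3}$ fixing all $2k-2$ walls of this simplicial chamber is the identity), so $\mathrm{Stab}_{\mathbb{P}U(\Lambda)}(P)=1$ and $\mathbb{P}U(\Lambda)=\mathbb{P}R=\mathbb{P}\Gamma$. The same algorithm, run in $\mathbb{CH}^9$ and $\mathbb{CH}^3$, recovers the pictures used by Laza and Allcock, so the three cases are handled uniformly. (Alternatively, for $k=4$ one could try to identify $(\Lambda,\star)$ with a Gaussian lattice whose unitary group is already known and bypass Vinberg, but the algorithmic route seems cleaner.)

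The hard part will be the termination step of Vinberg's algorithm for $k=4$: bounding the roots that can occur and verifying that the chamber cut out by the six $\beta_j$-mirrors already has finite volume, so that $U(\Lambda)$ harbours no further reflection. A second delicate point is the $k=3$ case, where one must confirm that the index-$2$ overgroup is exactly $\langle -1\rangle\cdot\Gamma$ and not witnessed by a hidden diagram automorphism — equivalently that $\mathrm{Stab}_{\mathbb{P}U(\Lambda)}(P)$ is still trivial there — consistent with the fact that the only extra identification, the bicoloring exchange, acts trivially on $\mathbb{CH}^3$.
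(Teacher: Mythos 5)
Your reduction to $\mathbb{P}\Gamma=\mathbb{P}U(\Lambda)$ plus bookkeeping of scalars is a sensible framing, but the engine you propose for proving $\mathbb{P}\Gamma=\mathbb{P}U(\Lambda)$ --- Vinberg's algorithm, a finite-volume chamber bounded by the mirrors $\beta_j^\perp$, simple transitivity of the reflection group on chambers, and diagram automorphisms --- does not exist in this setting. The mirror of a complex reflection is a complex hyperplane $\mathbb{CH}^{2k-4}\subset\mathbb{CH}^{2k-3}$, hence of \emph{real} codimension two; the mirrors do not disconnect complex hyperbolic space, so there are no chambers, no walls, and no Coxeter-theoretic fundamental domain, and the decomposition $\mathbb{P}U(\Lambda)=\mathbb{P}R\rtimes\mathrm{Stab}(P)$ has no meaning. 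This is exactly the obstruction that makes complex hyperbolic reflection groups so much harder than real ones, and it is why the identifications of these groups for $k=6$ and $k=3$ (Laza \cite{laza}, Allcock \cite{allcock2}) are substantial theorems rather than routine chamber computations. As written, the central step of your argument cannot be carried out. (A smaller gap: even granting $\mathbb{P}\Gamma=\mathbb{P}U(\Lambda)$, your claim that $\ker(\Gamma\to\mathbb{P}\Gamma)=\langle\zeta_3\rangle$ when $k=3$, i.e.\ that $-1\notin\Gamma$, needs proof; the rigidification shows $\zeta_3\in\Gamma$ but does not by itself exclude $-1$ arising as a word in the reflections.)

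The paper cites \cite{laza} and \cite{allcock2} for $k=6,3$ and handles the one open case $k=4$ by an orbit-counting argument that sidesteps the reflection-group structure entirely. Since the doubled 45-45-90 triangle is the unique convex tiling by one square, Proposition \ref{stuff} gives a single $\Gamma$-orbit, hence a single $U(\Lambda)$-orbit, of vectors of norm $2$; so $\Gamma=U(\Lambda)$ if and only if $\mathrm{Stab}_\Gamma(v)=\mathrm{Stab}_{U(\Lambda)}(v)$ for one such $v$. Proposition \ref{weight1} gives $|\mathrm{Stab}_\Gamma(v)|=2^{13}3^2$ from the partition $(2,3,3)$ and the order-$2$ automorphism group of that tiling, while $|\mathrm{Stab}_{U(\Lambda)}(v)|=|U(v^\perp)|$ is computed directly from the decomposition $v^\perp\cong L\oplus L\oplus\langle -2\rangle$ (using the uniqueness statement of Proposition \ref{unique}) to be $2^3\cdot|U(L)|^2=2^{13}3^2$. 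The two orders agree, so $\Gamma=U(\Lambda)$. If you want an argument you can actually complete, this is the route to take.
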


\begin{proof} Allcock and Laza have proven the proposition for $k=6$ and $k=3$. Suppose $k=4$. There is only one convex tiling of the sphere by $1$ square, given by the doubling of a 45-45-90 triangle. Therefore, there is only one $\Gamma$-orbit of vectors of norm $2$ in $(\Lambda,\star)$. Hence, there is also only one $U(\Lambda)$-orbit of norm $2$ vectors. To prove $\Gamma=U(\Lambda)$ it suffices to show that $\textrm{Stab}_\Gamma(v)$ and $\textrm{Stab}_{U(\Lambda)}(v)$ are equal for some $v$ satisfying $v\star v=2$. By Proposition \ref{weight1}, $|\textrm{Stab}_\Gamma(v)|=2^{13}3^2.$ On the other hand, $|\textrm{Stab}_{U(\Lambda)}(v)|=|U(v^\perp)|.$ Let $L$ be the negative-definite $(1+i)$-modular lattice $$L:=\{(a,b)\in \Z[i]^2\,\big{|}\,u\equiv v\textrm{ mod }1+i\}$$ where $\Z[i]^2$ has the diagonal Hermitian form $\textrm{diag}(-1,-1)$. Let $H$ be the unique $(1+i)$-modular lattice of signature $(1,1)$, whose Gram matrix is $$\twobytwo{0}{1-i}{1+i}{0}.$$ By Proposition \ref{unique} we have $\Lambda\cong L\oplus L \oplus H.$ The orthogonal complement of a vector $v$ of norm $2$ is $$v^\perp\cong L\oplus L\oplus \langle -2\rangle.$$ The automorphisms of $v^\perp$ act by a fourth root of unity on the vector generating $\langle -2\rangle$---this vector must be preserved up to units since it is perpendicular to all other vectors of norm $-2$. In addition, an automorphism can either preserve or switch the two factors of $L$ and acts by an element of $U(L)$ on each factor. Thus, $$|U(v^\perp)|=2^3\cdot |U(L)|^2.$$ To compute the order of $U(L)$, note that $U(L)$ acts transitively that all $24$ vectors of norm $-2$ in $L$. Then the stabilizer of say $(1+i,0)\in L$ has order $4$, given by multiplying the second coordinate by a fourth root of unity. Hence $|U(L)|=24\cdot 4=2^5\cdot 3$. So $$|\textrm{Stab}_{U(\Lambda)}(v)|=2^3\cdot (2^5\cdot 3)^2=2^{13}\cdot 3^2.$$ We conclude that $\Gamma=U(\Lambda)$. \end{proof}

\section{Theta Series of Hermitian Lattices}

In this section, let $\Lambda$ denote a $(1+\zeta_k)$-modular Hermitian lattice over $\Z[\zeta_k]$ of indefinite signature $(r,s)$, which by Proposition \ref{unique} is unique if it exists. A point $p\in Gr^+(r,\Lambda\otimes \C)$ in the complex Grassmannian corresponds to a positive definite subspace $V^+\subset \Lambda \otimes \C$. Let $V^-$ denote the negative definite orthogonal complement. Given any $v\in \Lambda$, let $v^+\in V^+$ and $v^-\in V^-$ be its projections. Our starting point is the Siegel theta function \cite{siegel} on $Gr^+(r,\Lambda\otimes \C)\times \mathbb{H}$: $$\Theta(p,\tau)=\sum_{v\in \Lambda} q^{\frac{1}{2}v^+\cdot v^+}\overline{q}^{\,-\frac{1}{2}v^-\cdot v^-}=\sum_{v\in \Lambda} q^{\frac{1}{2}v\cdot v}|q|^{-v^-\cdot v^{-}}$$ where $q=e^{2\pi i \tau}$. More generally, the theta function $\Theta(p,\tau)$ can be defined in the same way for any $\Z$-lattice, where $p$ ranges over the positive real Grassmannian.

\begin{definition} A function $f(\tau)$ on the upper half-plane is {\it modular} of weight $(r,s)$ for a subgroup $\Gamma\subset SL_2(\Z)$ if $$f(\gamma\cdot \tau)=(c\tau+d)^r(c\overline{\tau}+d)^sf(\tau)$$ for all $\gamma=\twobytwo{a}{b}{c}{d}\in \Gamma$. \end{definition}

By Proposition \ref{dual}, the $\Z$-lattice $(\Lambda,\cdot)$ is abstractly isometric to $u\Lambda^*$ for $u=|1-\zeta_k|$. This implies a nice modularity property of the Siegel theta function:

\begin{proposition}\label{umod} Suppose $(\Lambda,\cdot)$ is an even $\Z$-lattice of signature $(2r,2s)$ such that $\Lambda$ is abstractly isometric to $u\Lambda^*$ with $u>0$. Define $$\Gamma_u:= \left\langle \twobytwo{1}{1}{0}{1},\twobytwo{1}{0}{u^2}{1}\right\rangle\subset SL_2(\Z).$$ Then for fixed $p$, the theta function $\Theta(p, \tau)$ is modular of weight $(r,s)$ for the group $\Gamma_u$. \end{proposition}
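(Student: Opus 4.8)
The plan is to reduce the claim to the classical transformation law of the Siegel theta function under the full modular group $SL_2(\Z)$, and then observe that for a lattice isometric to $u\Lambda^*$ the two generators of $\Gamma_u$ act trivially, up to the automorphy factor. First I would recall the standard facts, due to Siegel, for an even $\Z$-lattice $\Lambda$ of signature $(2r,2s)$: the theta series $\Theta_\Lambda(p,\tau)$ (with a choice of $p\in Gr^+$) transforms under $\tau\mapsto\tau+1$ by the phase $e^{\pi i\, v\cdot v}$ summed term-by-term, which is trivial precisely because $\Lambda$ is \emph{even}; and under $\tau\mapsto -1/\tau$ by Poisson summation, picking up a factor $(\tau/i)^r(\overline\tau/i)^s$ times $(\mathrm{CoVol}\,\Lambda)^{-1}$ times $\Theta_{\Lambda^*}(p,\tau)$. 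Composing these, the matrix $\twobytwo{1}{0}{u^2}{1}=\twobytwo{0}{-1}{1}{0}\twobytwo{1}{-u^2}{0}{1}\twobytwo{0}{-1}{1}{0}^{-1}$ sends $\Theta_\Lambda$ to $(c\tau+d)^r(c\overline\tau+d)^s$ times the theta series of the rescaled dual lattice $\tfrac{1}{u}\Lambda^*$ (the $-u^2$ translation only contributes a trivial phase since $\tfrac{1}{u}\Lambda^*$ has the property that $w\cdot w\in 2\Z$ for $w$ in it — this is exactly the statement that $u\Lambda^*$, hence its rescaling's evenness, behaves correctly; more precisely one tracks the covolume factors, which cancel because $(\mathrm{CoVol}\,\Lambda)(\mathrm{CoVol}\,\Lambda^*)=1$ and $u^{2(r+s)}\mathrm{CoVol}(\Lambda^*) \cdot u^{-2(r+s)} = \mathrm{CoVol}(\tfrac1u\Lambda^*)$). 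Since $\tfrac{1}{u}\Lambda^*$ is abstractly isometric to $\Lambda$ by hypothesis, and the theta series depends only on the isometry class of the pair $(\Lambda, p)$ with $p$ transported along the isometry, we get $\Theta_\Lambda(\gamma\cdot\tau)=(c\tau+d)^r(c\overline\tau+d)^s\Theta_\Lambda(p',\tau)$ for the appropriate $p'$; fixing $p$ and noting the orbifold point is the same, the modularity for this generator follows.

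The two key steps, then, are: (i) state the $SL_2(\Z)$-transformation of $\Theta_\Lambda$ carefully, keeping track of the covolume and the dual lattice, for which I would cite Siegel \cite{siegel} (or phrase it via Poisson summation directly); and (ii) verify that the specific word $\twobytwo{1}{0}{u^2}{1}=ST^{-u^2}S^{-1}$ in $S=\twobytwo{0}{-1}{1}{0}$ and $T=\twobytwo{1}{1}{0}{1}$, when applied through these transformations, returns $\Theta$ of a lattice isometric to the original. The translation $T$ is handled by evenness of $(\Lambda,\cdot)$, which Proposition \ref{dual} (via the remark that $x\cdot x$ counts tiles, doubled for squares and hexagons) guarantees; it also handles evenness of $\tfrac1u\Lambda^*\cong\Lambda$ for the inner conjugating translation $T^{-u^2}$, since $u^2\,(w\cdot w)\in 2\Z$ whenever $w\cdot w\in \tfrac{2}{u^2}\Z$, i.e. $w\in\Lambda^*$ paired via $\cdot$ — here one uses that $\Lambda\subset u\Lambda^*$ is even together with the integrality $u^2(\Lambda^*\cdot\Lambda^*)\subset\Z$ that comes from $\Lambda=u\Lambda^*$ being a lattice. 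I would spell out this integrality point since it is the crux of why the exponent $u^2$ (rather than $1$) is exactly right.

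The main obstacle I anticipate is bookkeeping the automorphy factors and covolume constants through the conjugation $S T^{-u^2} S^{-1}$ without sign or normalization errors — in particular making sure the factor $(c\overline\tau+d)$ appears with exponent $s$ and not $r$, which amounts to correctly pairing the positive-definite part $V^+$ (contributing holomorphically, exponent $r$) with $q$ and the negative-definite part $V^-$ with $\overline q$ under the $S$-transformation, where Poisson summation on the Gaussian $e^{-\pi(\ldots)}$ on $V^+\oplus V^-$ produces $(\tau/i)^{-r}(\overline\tau/i)^{-s}$ in the denominator, i.e. $(\tau/i)^{r}(\overline\tau/i)^{s}$ after rearranging. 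Once that is pinned down for $S$ alone, the rest is formal: the group $\Gamma_u$ is generated by $T$ and $ST^{-u^2}S^{-1}$, both of which we have checked act on $\Theta_\Lambda(p,\tau)$ by the claimed automorphy factor with trivial multiplier, and an arbitrary $\gamma\in\Gamma_u$ follows by the cocycle relation for $(c\tau+d)^r(c\overline\tau+d)^s$. I would close by remarking that the hypothesis $\Lambda\cong u\Lambda^*$ is used only to identify the theta series of the transformed lattice with the original — it is what upgrades a relation between $\Theta_\Lambda$ and $\Theta_{\Lambda^*}$ into a genuine modularity statement for $\Theta_\Lambda$ itself.
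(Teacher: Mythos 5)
Your proposal is correct and follows essentially the same route as the paper: $T$-invariance from evenness, Poisson summation for the inversion, and the conjugacy $\twobytwo{1}{0}{u^2}{1}=ST^{-u^2}S^{-1}$ (which the paper groups as $\twobytwo{0}{-1}{u^2}{0}\,T^{-1}\,\twobytwo{0}{-1}{u^2}{0}^{-1}$, so that $\Lambda\cong u\Lambda^*$ lets $\Theta$ return directly to itself), with the extraneous powers of $i$ cancelling between the two inversions. One small slip: the lattice isometric to $\Lambda$ by hypothesis is $u\Lambda^*$, not $\tfrac{1}{u}\Lambda^*$; what your middle step actually needs is only that $u^2(w\cdot w)\in 2\Z$ for $w\in\Lambda^*$, which you do state correctly elsewhere.
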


\begin{proof} Note that the integrality of $\cdot$ implies that $u^2\in \Z$. First observe $\Theta(p,\tau)$ is invariant under $\tau\mapsto \tau+1$ because $\Lambda$ is even. The Poisson summation formula implies that $$\Theta\left(p,\frac{-1}{u^2\tau}\right)=i^{s-r}(u\tau)^r(u\overline{\tau})^s\Theta(p,\tau).$$ 
Thus $\Theta(p,\tau)$ is almost modular with respect to $\tau\mapsto -1/u^2\tau$, but for the factor $i^{s-r}$. Since $$\twobytwo{1}{0}{u^2}{1}=\twobytwo{0}{-1}{u^2}{0}\twobytwo{1}{1}{0}{1}^{-1}\twobytwo{0}{-1}{u^2}{0}^{-1},$$ the powers of $i$ cancel in the transformation rule for the matrix $\twobytwo{1}{0}{u^2}{1}$. \end{proof}

\begin{corollary}\label{umod2} Suppose $k=6$, $4$, or $3$. Let $\Lambda$ be a $(1+\zeta_k)$-modular Hermitian lattice over $\Z[\zeta_k]$ of signature $(r,s)$. For fixed $p$, the Siegel theta function $\Theta(p,\tau)$ of $\Lambda$ is modular of weight $(r,s)$ for the group $$\Gamma_1(|1-\zeta_k|^2):=\left\{\gamma\in SL_2(\Z)\,\big{|}\,\gamma\equiv \twobytwo{1}{*}{0}{1}\textrm{ mod }|1-\zeta_k|^2\right\}.$$ \end{corollary}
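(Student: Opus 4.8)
The plan is to feed Proposition \ref{dual2} into Proposition \ref{umod} and then identify the resulting congruence subgroup explicitly. Write $u := |1-\zeta_k|$, so that $u^2 = |1-\zeta_k|^2$ equals $1$, $2$, or $3$ according as $k = 6$, $4$, or $3$. By Proposition \ref{dual2}, $\Lambda = (1-\zeta_k)\Lambda^*$ with respect to the real form $\cdot$; since multiplication by $1-\zeta_k$ scales $\cdot$ by $u^2$, this says precisely that the even $\Z$-lattice $(\Lambda,\cdot)$, whose real signature is $(2r,2s)$, is abstractly isometric to $u\Lambda^*$. Proposition \ref{umod} then applies directly and shows that, for fixed $p$, the Siegel theta function $\Theta(p,\tau)$ is modular of weight $(r,s)$ for the group
$$\Gamma_u = \left\langle \twobytwo{1}{1}{0}{1},\ \twobytwo{1}{0}{u^2}{1} \right\rangle \subset SL_2(\Z).$$
So it remains only to prove the group identity $\Gamma_u = \Gamma_1(u^2)$.

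The inclusion $\Gamma_u \subseteq \Gamma_1(u^2)$ is immediate, as both generators are congruent to $\twobytwo{1}{*}{0}{1}$ modulo $u^2$. For the reverse inclusion, the case $u^2 = 1$ is the classical fact that $\twobytwo{1}{1}{0}{1}$ and $\twobytwo{1}{0}{1}{1}$ generate $SL_2(\Z)$. For $u^2 = 2$ and $3$ I would argue by an explicit reduction. Left multiplication by powers of the two generators of $\Gamma_u$ realizes the row operations ``add an integer multiple of row two to row one'' and ``add $u^2$ times an integer multiple of row one to row two'', each of which preserves the conditions $u^2 \mid c$ and $a \equiv d \equiv 1 \pmod{u^2}$. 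Starting from $\gamma = \twobytwo{a}{b}{c}{d} \in \Gamma_1(u^2)$, one uses these moves to run a Euclidean algorithm on the first column, strictly decreasing $\max(|a|,|c|)$ until $c = 0$; then $ad = 1$, so $\gamma = \pm\twobytwo{1}{b}{0}{1}$. When $u^2 = 3$ the sign is forced to be $+$ by $a \equiv 1 \pmod{3}$, so $\gamma \in \Gamma_u$. When $u^2 = 2$ one checks separately that $\twobytwo{1}{0}{2}{1}\cdot\twobytwo{1}{-1}{0}{1} = \twobytwo{1}{-1}{2}{-1}$ squares to $-I$, whence $-I \in \Gamma_u$ and again $\gamma \in \Gamma_u$. (Alternatively, for $u^2 \in \{2,3\}$ the group $\overline{\Gamma_1(u^2)} \subset PSL_2(\Z)$ is genus zero with two cusps and one elliptic point of order $u^2$, the matrix $\twobytwo{1}{0}{u^2}{1}\cdot\twobytwo{1}{-1}{0}{1}$ --- which has trace $2-u^2$ --- generates the elliptic stabilizer, and a short coset enumeration then forces $\Gamma_u = \Gamma_1(u^2)$.)

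The only content beyond invoking Propositions \ref{dual2} and \ref{umod} is this group identity, and it is an honest finite, low-index computation rather than a structural fact: the analogous identity $\langle \twobytwo{1}{1}{0}{1},\ \twobytwo{1}{0}{N}{1}\rangle = \Gamma_1(N)$ fails already for $N = 5$, where the left-hand side is $2$-generated but the right-hand side has abelianization of rank $3$, so the argument genuinely uses $|1-\zeta_k|^2 \leq 3$ for the three roots of unity in question. I expect the only mildly delicate point to be confirming that the two available moves really do drive the Euclidean reduction to completion --- the move altering row two carries a forced factor of $u^2$, which one must check never obstructs the descent of $\max(|a|,|c|)$ --- together with keeping track of the congruence conditions and the sign ambiguity, which arises only when $k = 4$.
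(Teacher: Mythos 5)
Your proposal is correct and follows the paper's proof exactly: reduce via Proposition \ref{umod} (with the hypothesis supplied by Proposition \ref{dual2}) to the group identity $\Gamma_u=\Gamma_1(|1-\zeta_k|^2)$ for $u^2\in\{1,2,3\}$. The only difference is that the paper verifies this identity by a SAGE computation, whereas you give a correct hand proof by Euclidean reduction on the first column (the descent does close up since $u^2|r''|/2\le u^2|c|/4<|c|$ for $u^2\le 3$, and your handling of $-I$ for $u^2=2$ and of the sign for $u^2=3$ is right), together with an accurate explanation via the rank-$3$ abelianization of $\Gamma_1(5)$ of why the identity is special to small $u^2$.
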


\begin{proof} When $k=6$, $4$, or $3$, we have $|1-\zeta_k|^2=1$, $2$, or $3$ respectively. In these cases, SAGE \cite{sage} verifies that $\Gamma_u=\Gamma_1(|1-\zeta_k|^2)$. This is a peculiarity of the small value of $u^2$. \end{proof}

For fixed $\tau$, the function $\Theta(p,\tau)$ is absolutely invariant with respect to the action of $U(\Lambda)$ on $Gr^+(r,\Lambda\otimes \C)$, that is, the action on the variable $p$. It defines a {\it theta correspondence}---one can integrate against a function $f(\tau)$ on $\mathbb{H}$ to produce a function on $U(\Lambda)\backslash Gr^+(r,L\otimes \C)$ or vice versa. Much research has focused on former process, called the ``Borcherds lift," which has led to beautiful product formulas for coefficients of modular forms \cite{borcherds}. In this paper, we integrate against the $p$ variable.

\begin{definition}  Suppose $k=6$, $4$, or $3$. Let $(\Lambda,\star)$ be the unique Hermitian lattice of signature $(1,s)$ over $\Z[\zeta_k]$ such that $\Lambda=(1+\zeta_k)\Lambda^\vee$. Suppose $s>1$. Define $$g_{\Lambda}(\tau):=\frac{(\textrm{Im}\,\tau)^s}{|Z(U(\Lambda))|}\,\int_{U(\Lambda) \backslash \mathbb{CH}^s} \Theta(p,\tau)\,dp$$ where $dp$ is the complex hyperbolic volume form and $Z(U(\Lambda))$ is the center. \end{definition}

Note that the center $Z(U(\Lambda))$ is isomorphic to the group of units in $\Z[\zeta_k]$. Satz 1 of Siegel's foundational paper \cite{siegel} on theta functions of indefinite Hermitian forms proves that $g_\Lambda(\tau)$ is a Maass form, which we now define: 

\begin{definition}\label{maass} A {\it Maass form} of weight $w$ for $\Gamma\subset SL_2(\Z)$ is a real-analytic function $f(\tau)$ on $\mathbb{H}$ satisfying $f(\gamma\cdot \tau)=(c\tau+d)^wf(\tau)$ for all $\gamma\in \Gamma$
which has a Fourier expansion of the form $$c^{-}(0)(\textrm{Im}\,\tau)^w+\sum_{n<0} c^-(n)\Gamma(s,4\pi |n|\,\textrm{Im}\,\tau)q^n+ \sum_{n\geq 0} c^+(n)q^n$$ and polynomial growth as $\tau$ approaches a rational cusp of $\mathbb{H}$. Here $$\displaystyle\Gamma(t,z):=\int_z^\infty x^{t-1}e^{-x}\,dx$$ is the incomplete gamma function. \end{definition}

See Section 7 of \cite{ono} for a general introduction to Maass forms.

\begin{remark} Define the {\it weight $w$ hyperbolic Laplacian} to be $$\Delta_w=-(\textrm{Im}\,\tau)^2\partial_\tau\partial_{\overline{\tau}}+iw\textrm{Im}\,\tau\, \partial_{\overline{\tau}}.$$ Then a Maass form $f(\tau)$ of weight $w$ is harmonic with respect to $\Delta_w$, that is $\Delta_wf=0$. This condition plus polynomial growth at the cusps implies the existence of a Fourier expansion as above. \end{remark}



In the following lemmas, we explicitly compute the Fourier coefficients of $g_\Lambda(\tau)$, and in Theorem \ref{integrals} reprove Siegel's theorem. The positive Fourier coefficients are of particular relevance to the enumeration of tilings, because they are essentially the weighted number of $U(\Lambda)$-orbits of lattice points.

We first collect the terms of $\Theta(p,\tau)$ into $U(\Lambda)$-orbits: $$\Theta(p,\tau)=\sum_{[v]\in U(\Lambda)\backslash \Lambda} \left(\,\sum_{w\in [v]} |q|^{-w^-\cdot w^{-}}\right) q^{\frac{1}{2}v\cdot v}.$$ Since $q^{\frac{1}{2}v\cdot v}$ is independent of $p$, we have \begin{align}\label{one} g_{\Lambda}(\tau)=\frac{(\textrm{Im}\,\tau)^s}{|Z(U(\Lambda))|}\sum_{[v]\in U(\Lambda)\backslash \Lambda} \left(\int_{U(\Lambda)\backslash \C\mathbb{H}^s} \sum_{w\in[v]} |q|^{-w^-\cdot w^-}\,dp\right)q^{\frac{1}{2}v\cdot v}.\end{align}
It is useful to introduce the following function on $U(\Lambda)\backslash \Lambda\times \mathbb{H}$:
\begin{align}\label{two} F([v],\tau)&:=\frac{1}{|Z(U(\Lambda))|}\int_{U(\Lambda)\backslash \C\mathbb{H}^s} \sum_{w\in[v]} |q|^{-w^-\cdot w^-}\,dp. \end{align} For $v\neq 0$ we have \begin{align*} F([v],\tau)&=\int_{\textrm{Stab}(v)\backslash\C\mathbb{H}^s} |q|^{-v^-\cdot v^-}\,dp.\end{align*}

The factor of $|Z(U(\Lambda))|^{-1}$ disappears (except when $v=0$) because the scalar matrices in $U(\Lambda)$ act  trivially on the fundamental domain $U(\Lambda)\backslash\mathbb{CH}^s$. There are four possibilities for the behavior of this integral, depending on when $v^2=0$, $v^2>0$, $v^2<0$, or $v$ is isotropic. In the next four lemmas, we compute the integral $F([v],\tau)$ in these four cases.

\begin{lemma}\label{zero} Suppose $v=0$. Then $$F([v],\tau)=\frac{{\rm Vol}(U(\Lambda)\backslash \mathbb{CH}^s)}{|Z(U(\Lambda))|}.$$ \end{lemma}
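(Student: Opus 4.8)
The plan is to simply unwind the definition of $F([v],\tau)$ in the special case $v=0$. The $U(\Lambda)$-orbit of the zero vector is $[0]=\{0\}$, a single element, since every isometry fixes $0$. For the unique summand $w=0$ we have $w^-=0$, hence $w^-\cdot w^-=0$ and $|q|^{-w^-\cdot w^-}=|q|^{0}=1$. Therefore the integrand $\sum_{w\in[0]}|q|^{-w^-\cdot w^-}$ is identically equal to the constant function $1$ on $U(\Lambda)\backslash\mathbb{CH}^s$, independent of both $p$ and $\tau$.

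Plugging this into \eqref{two} gives
$$F([0],\tau)=\frac{1}{|Z(U(\Lambda))|}\int_{U(\Lambda)\backslash\mathbb{CH}^s} 1\,dp=\frac{{\rm Vol}(U(\Lambda)\backslash\mathbb{CH}^s)}{|Z(U(\Lambda))|},$$
which is the claimed formula. (Here, as throughout Section 4, $\mathbb{CH}^s=Gr^+(1,\Lambda\otimes\C)$ since the signature is $(1,s)$, and the volume is taken with respect to the complex-hyperbolic volume form $dp$.)

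The only point requiring comment is that the right-hand side is finite, i.e. that $U(\Lambda)\backslash\mathbb{CH}^s$ has finite volume; this is not really an obstacle, as it follows from the fact, established via Proposition \ref{stuff} and the work of Thurston and Deligne--Mostow cited in Section 2, that $U(\Lambda)\backslash\mathbb{CH}^s$ is a finite-volume complex-hyperbolic orbifold (indeed it is a finite quotient of the compact-type completed moduli space, whose volume Thurston computes). Apart from this remark, the proof is a one-line substitution, so there is no substantive difficulty; this lemma merely records the $v=0$ contribution that will enter the constant term of $g_\Lambda(\tau)$ in the subsequent computation.
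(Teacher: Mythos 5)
Your proof is correct and is exactly the paper's argument: the paper's proof of this lemma is the single line ``follows immediately from (2),'' and your substitution of $v=0$ into the definition of $F([v],\tau)$, with the orbit $[0]=\{0\}$ and integrand identically $1$, is precisely that unwinding. The added remark on finiteness of ${\rm Vol}(U(\Lambda)\backslash\mathbb{CH}^s)$ is a reasonable (and correct) supplement that the paper leaves implicit.
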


\begin{proof} The lemma follows immediately from (2). \end{proof}

\begin{lemma}\label{positive} Suppose $v\cdot v=2n>0$. Then $$F([v],\tau)=\frac{(n\textrm{Im}\,\tau)^{-s}}{|{\rm Stab}(v)|}.$$ \end{lemma}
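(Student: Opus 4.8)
The plan is to use that a positive-norm vector $v$ singles out a distinguished point of $\mathbb{CH}^s$, reduce the orbifold integral defining $F([v],\tau)$ to $|\textrm{Stab}(v)|^{-1}$ times an integral over all of $\mathbb{CH}^s$, and then evaluate the latter explicitly by a substitution that turns it into a Gamma integral. Concretely, since $v\cdot v=2n>0$ and $v\bullet v=v\cdot v=2n$ is real and positive, the line $\C v\subset\Lambda\otimes\C$ is positive-definite for the Hermitian form $\bullet$, hence defines a point $p_v\in\mathbb{CH}^s$. Starting from the formula $F([v],\tau)=\int_{\textrm{Stab}(v)\backslash\mathbb{CH}^s}|q|^{-v^-\cdot v^-}\,dp$ recorded just before Lemma~\ref{zero}, I would observe that $|q|^{-v^-\cdot v^-}=\exp\!\big(2\pi\,\textrm{Im}\,\tau\,(v^-\cdot v^-)\big)$ depends only on $-v^-\cdot v^-\geq 0$, which is a function of the hyperbolic distance from $p$ to $p_v$ alone; in particular the integrand is invariant under $\textrm{Stab}(v)=\textrm{Stab}_{U(\Lambda)}(v)$, which fixes $\C v$ and therefore $p_v$. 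By the orbifold convention $\int_{G\backslash X}f\,dp=|G|^{-1}\int_X f\,dp$ for finite $G$ and $G$-invariant $f$ (which applies even though $Z(U(\Lambda))\subset\textrm{Stab}(v)$ acts trivially on $\mathbb{CH}^s$), this gives $F([v],\tau)=|\textrm{Stab}(v)|^{-1}\int_{\mathbb{CH}^s}\exp\!\big(2\pi\,\textrm{Im}\,\tau\,(v^-\cdot v^-)\big)\,dp$.

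Next I would pick coordinates centered at $p_v$. Set $e_0=v/\sqrt{2n}$, so $e_0\bullet e_0=1$, and extend to a $\C$-basis $e_0,e_1,\dots,e_s$ of $\Lambda\otimes\C$ with Gram matrix $\mathrm{diag}(1,-1,\dots,-1)$, which exists because $\bullet$ has signature $(1,s)$ and $\C v$ is positive. In the resulting ball chart $p=[\,e_0+\sum_{i\geq 1}w_ie_i\,]$, with $w$ in the unit ball of $\C^s$, the origin is $p_v$, and a short computation gives $v^-\cdot v^-=v\bullet v-|v\bullet z|^2/(z\bullet z)=2n-2n/(1-|w|^2)=-2n|w|^2/(1-|w|^2)$. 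Hence the integrand equals $\exp\!\big(-t\,|w|^2/(1-|w|^2)\big)$ with $t:=4\pi n\,\textrm{Im}\,\tau>0$; it decays rapidly as $|w|\to 1$, so the integral converges.

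To evaluate $\int_{\mathbb{CH}^s}\exp\!\big(-t\,|w|^2/(1-|w|^2)\big)\,dp$, I would write $dp$ in this chart as $4^s(1-|w|^2)^{-(s+1)}$ times Lebesgue measure $dV$ — the normalization for which, when $s=1$, $dp$ is the usual area form $4(1-|w|^2)^{-2}\,dA$ on the Poincar\'e disk — then pass to polar coordinates $u=|w|^2\in[0,1)$ and substitute $x=u/(1-u)\in[0,\infty)$. The various factors collapse to $(1-u)^{-(s+1)}u^{s-1}\,du=x^{s-1}\,dx$, so using $\textrm{Vol}(S^{2s-1})=2\pi^s/(s-1)!$ the integral becomes $4^s\cdot\frac{\pi^s}{(s-1)!}\int_0^\infty e^{-tx}x^{s-1}\,dx=\frac{4^s\pi^s}{(s-1)!}\cdot\frac{(s-1)!}{t^s}=\frac{4^s\pi^s}{t^s}=(n\,\textrm{Im}\,\tau)^{-s}$. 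Combined with the reduction of the first paragraph, this yields $F([v],\tau)=(n\,\textrm{Im}\,\tau)^{-s}/|\textrm{Stab}(v)|$.

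The linear algebra and the substitution are routine; the only delicate point is pinning down the normalization of the complex-hyperbolic volume form $dp$ so that the constants $\pi^s$, $4^s$ and $(s-1)!$ cancel exactly. This forces the choice $dp=4^s(1-|w|^2)^{-(s+1)}\,dV$ in the ball chart — the volume form of the $\mathbb{CH}^s$-metric of constant holomorphic sectional curvature $-1$ — and this normalization must be used consistently in Lemmas~\ref{zero}--\ref{positive} and in Theorem~\ref{integrals}.
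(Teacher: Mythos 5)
Your proof is correct and follows essentially the same route as the paper's: unfold the orbifold integral to $|\mathrm{Stab}(v)|^{-1}\int_{\mathbb{CH}^s}$, pass to ball coordinates centered at the point of $\mathbb{CH}^s$ determined by $v$, compute $v^-\cdot v^-=-2n|w|^2/(1-|w|^2)$, and reduce to a Gamma integral via the substitution $x=|w|^2/(1-|w|^2)$ using the same normalization $dp=4^s(1-|w|^2)^{-(s+1)}\,dV$. (One small slip in a parenthetical: for $v\neq 0$ the center $Z(U(\Lambda))$ meets $\mathrm{Stab}(v)$ trivially rather than being contained in it --- this is precisely why the $|Z(U(\Lambda))|^{-1}$ prefactor has already disappeared from the paper's formula for $F([v],\tau)$ --- but this does not affect the computation.)
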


\begin{proof} Since $\textrm{Stab}(v)$ is a finite group, we can rewrite \begin{align*}F([v],\tau)&=\frac{1}{|\textrm{Stab}(v)|}\int_{\mathbb{CH}^s}|q|^{-v^-\cdot v^-}\,dp \\ &=\frac{1}{|\textrm{Stab}(v)|}\int_{\C\mathbb{H}^s} e^{4\pi n\textrm{Im}\,\tau (e_0^{-}\cdot e_0^{-})}\,dp
.\end{align*} where $e_0=v/\sqrt{2n}$. Extend $e_0$ to an orthonormal basis of $(\C^{1,s},\bullet)$, where $\bullet$ is the Hermitian form whose real part is $\cdot$ as in Definition \ref{scalings}. In this basis, the Klein model of $\mathbb{CH}^s$ is the unit ball in the plane $z_0=1$, i.e. the points $p=(1,z_1,\dots,z_k)$ with $\sum |z_i|^2<1$. Let $r=\sqrt{|z_1|^2+\dots+|z_k|^2}$ denote the radial coordinate of this unit ball. The volume form of the homogeneous metric whose curvature is pinched between $1/4$ and $1$ is $$dp=\frac{4^sr^{2s-1}\,dr\,d\mu}{(1-r^2)^{s+1}}$$ where $d\mu$ is the standard volume form on $S^{2s-1}$. To express the projections $e_0^+$ and $e_0^-$ in terms of $p$, we need to be careful to use the Hermitian inner product $\bullet$ since we are projecting to the {\em complex} span of $p$ and its orthogonal complement; with this in mind, $e_0^+=\frac{e_0\bullet p}{p\bullet p}p$ and so $$e_0^-\cdot e_0^-=e_0\cdot e_0-e_0^+\cdot e_0^+=1 - \frac{|e_0\bullet p|^2}{p\bullet p}=-\frac{r^2}{1-r^2}.$$ Let $C:=4\pi n\textrm{Im}\,\tau$. Then, we have \begin{align*}
F([v],\tau)&=\frac{4^s\textrm{Vol}(S^{2s-1})}{|\textrm{Stab}(v)|}\int_{0}^1e^{-C\frac{r^2}{1-r^2}}\frac{r^{2s-1}}{(1-r^2)^{s+1}}\,dr \\
 &=\frac{4^s\textrm{Vol}(S^{2s-1})}{2C^s|\textrm{Stab}(v)|}\int_{0}^{\infty}e^{-u}u^{s-1}\,du \hspace{20pt}\textrm{ where } u=C\frac{r^2}{1-r^2} \\
 &= \frac{4^s(s-1)!\textrm{Vol}(S^{2s-1})}{2C^s|\textrm{Stab}(v)|} \\ &=\frac{(n\textrm{Im}\,\tau)^{-s}}{|\textrm{Stab}(v)|}. 
\end{align*}\end{proof}

\begin{lemma}\label{negative} Suppose that $v\cdot v=-2n<0$. Then $$F([v],\tau)=(n\textrm{Im}\,\tau)^{-s}\Gamma(s,4\pi n\,\textrm{Im}\,\tau)\frac{{\rm Vol}(X_v)}{(4\pi)^{s-1}}$$ where $X_v$ is the quotient of $\mathbb{CH}^{s-1}$ by $\textrm{Stab}(v)$. \end{lemma}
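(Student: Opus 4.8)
The plan is to follow the same strategy as in Lemma \ref{positive}, but to organize the integral over $\mathbb{CH}^s$ as a fibered integral over the orthogonal sub-ball $\mathbb{CH}^{s-1}=\mathbb{P}\{w\in v^\perp\otimes\C : w\bullet w>0\}$, whose fibers are one-complex-dimensional disks. First I would choose adapted coordinates: a $\bullet$-orthonormal basis $e_0,e_1,\dots,e_s$ of $(\C^{1,s},\bullet)$ with $e_0=v/\sqrt{2n}$ (so $e_0\bullet e_0=-1$), with $e_1\bullet e_1=+1$, and with $e_1,\dots,e_s$ spanning $v^\perp$. In the ball model normalized so that the $e_1$-coordinate is $1$, a point of $\mathbb{CH}^s$ is $p=(w_0,1,w_2,\dots,w_s)$ with $|w_0|^2+\rho^2<1$ where $\rho^2=|w_2|^2+\dots+|w_s|^2$. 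A short projection computation with $\bullet$, exactly as in Lemma \ref{positive}, gives $v^+\bullet v^+=2n|w_0|^2/(1-|w_0|^2-\rho^2)$ and hence
\[
-\,v^-\cdot v^- \;=\; 2n\,\frac{1-\rho^2}{1-|w_0|^2-\rho^2},
\]
so the integrand equals $\exp\!\big(-C\,\tfrac{1-\rho^2}{1-|w_0|^2-\rho^2}\big)$ with $C:=4\pi n\,\textrm{Im}\,\tau$.

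Next I would introduce the orthogonal projection $\pi:\mathbb{CH}^s\to\mathbb{CH}^{s-1}$, which in these coordinates is $(w_0,1,w_2,\dots,w_s)\mapsto(1,w_2,\dots,w_s)$. Since every element of $\textrm{Stab}(v)$ fixes $v$ and therefore preserves $v^\perp$, the map $\pi$ is $\textrm{Stab}(v)$-equivariant, the induced homomorphism $\textrm{Stab}(v)\to U(v^\perp)$ is injective (an isometry fixing $v$ and acting trivially on $v^\perp$ is the identity), and $\textrm{Stab}(v)$ acts in the standard ball model on $\mathbb{CH}^{s-1}$ with quotient $X_v$; the reduction from $U(\Lambda)\backslash\mathbb{CH}^s$ to $\textrm{Stab}(v)\backslash\mathbb{CH}^s$ is the one already recorded after equation (2). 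Writing $dp=4^s(1-|w_0|^2-\rho^2)^{-(s+1)}$ times Lebesgue measure and integrating first over the fiber disk $\{|w_0|^2<1-\rho^2\}$, I substitute $t=|w_0|^2/(1-\rho^2)$ and then $u=C/(1-t)$. The factor $1-\rho^2$ cancels out of the exponent, and the fiber integral evaluates to $4^s\pi\,C^{-s}\Gamma(s,C)\,(1-\rho^2)^{-s}$, which is exactly $4\pi\,C^{-s}\Gamma(s,C)$ times the complex-hyperbolic volume form $4^{s-1}(1-\rho^2)^{-s}$ times Lebesgue on $\mathbb{CH}^{s-1}$.

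Integrating this constant over $X_v$ then gives $F([v],\tau)=4\pi\,C^{-s}\Gamma(s,C)\,\textrm{Vol}(X_v)$, and substituting $C=4\pi n\,\textrm{Im}\,\tau$ and collecting the constant $4\pi(4\pi)^{-s}=(4\pi)^{1-s}$ yields the claimed formula $(n\,\textrm{Im}\,\tau)^{-s}\Gamma(s,4\pi n\,\textrm{Im}\,\tau)\,\textrm{Vol}(X_v)/(4\pi)^{s-1}$. Convergence is not an issue, since on $\mathbb{CH}^s$ one always has $v^-\cdot v^-\le v\cdot v=-2n$, so the integrand is bounded by $|q|^{2n}$ and decays rapidly near the light cone. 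The one place the negative-norm case genuinely differs from Lemma \ref{positive} — and the step I expect to require the most care — is the verification that the $\rho$-dependence of the integrand cancels precisely against the $\rho$-dependence of the fiber radius and of the ambient volume form, so that the pushforward of the integrand along $\pi$ is a constant multiple of the volume form on $X_v$ rather than some nonconstant density; the remaining Gamma-integral manipulation is identical to the one in Lemma \ref{positive}.
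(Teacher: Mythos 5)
Your proposal is correct and takes essentially the same route as the paper's proof: the same adapted orthonormal basis with $v/\sqrt{2n}$ as one basis vector, the same fibration of $\textrm{Stab}(v)\backslash\mathbb{CH}^s$ over $X_v$ with disk fibers on which the stabilizer acts trivially, and the same substitutions reducing the fiber integral to $\pi C^{-s}\Gamma(s,C)(1-\rho^2)^{-s}$ so that the $\rho$-dependence reassembles into the volume form on $\mathbb{CH}^{s-1}$. The differences are purely cosmetic (coordinate labels and the explicit mention of the projection $\pi$).
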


\begin{proof} We have $$F([v],\tau)=\int_{\textrm{Stab}(v)\backslash \mathbb{CH}^s} e^{4\pi n\,\textrm{Im}\tau(e_1^{-}\cdot e_1^{-})}dp$$ where $e_1=v/\sqrt{2n}$. As in Lemma \ref{positive}, extend $e_1$ to an orthonormal basis of $(\C^{1,s},\bullet)$. Let $p=(1,z_1,\dots,z_s)$ be an element of $\mathbb{CH}^s$ and let $z'=(z_2,\dots,z_s)$. Then $$e_1^{-}\cdot e_1^{-}=-1-\frac{|e_1\bullet p|^2}{p\bullet p}=-\frac{1-|z'|^2}{1-|z_1|^2-|z'|^2}.$$ We may re-write $F([v],\tau)$ as $$\int_{\textrm{Stab}(v)\backslash \mathbb{CH}^s} \textrm{exp}\left(-4\pi n\,\textrm{Im}\tau\frac{1-|z'|^2}{1-|z_1|^2-|z'|^2}\right) \frac{4^s\,dV_{2s}}{(1-|z_1|^2-|z'|^2)^{s+1}}$$ where $dV_{2s}$ is the Euclidean volume form on the unit ball in $z_0=1$. 

Since $\textrm{Stab}(v)$ preserves the $z_1$ coordinate, it acts on the totally geodesic copy of $\mathbb{CH}^{s-1}$ on which $z_1 = 0$. Let $X_v$ be a fundamental domain for this action. Then the set of $p=(1,z_1,z')$ such that $z'\in X_v$ is a fundamental domain for $\textrm{Stab}(v)\backslash \mathbb{CH}^s$. To simplify the notation let $C:=4\pi  n\,\textrm{Im}\tau$ and $h(z') := \sqrt{1-|z'|^2}$. Then \begin{align*} F([v],\tau)&=4^s\int_{X_v} \left(\int_0^{2\pi}\!\!\!\int_0^h e^{-C\frac{h^2}{h^2-r^2}}\frac{r\,dr\,d\theta}{(h^2-r^2)^{s+1}}\right) dV_{2s-2} \\ &=4^s \int_{X_v} \left( \pi C^{-s}h^{-2s} \int_{C}^{\infty} u^{s-1}e^{-u}\,du \right)\,dV_{2s-2} \\ &\hspace{150pt}\textrm{ where } u=C\frac{h^2}{h^2-r^2}\\ &= 4\pi C^{-s}\Gamma(s, C) \int_{X_n} \frac{4^{s-1}dV_{2k-2}}{(1-|z'|)^k} \\ &=(n\textrm{Im}\,\tau)^{-s}\Gamma(s,4\pi n\,\textrm{Im}\,\tau)\frac{\textrm{Vol}(X_v)}{(4\pi)^{s-1}} \end{align*} \end{proof}

Since $s$ is a positive integer, the incomplete gamma function has the simple form $\Gamma(s,C) = (s-1)!e^{-C} \sum_{j=0}^{s-1}\frac{C^j}{j!}$. 

\begin{lemma}\label{null} Suppose $v=\beta e$ for some $e$ primitive isotropic and nonzero $\beta\in\Z[\zeta_k]$. Then $$F([v],\tau)= \frac{(2\,\textrm{Im}\,\zeta_k)^{2s-1}(s-1)!}{(2\pi\beta\overline{\beta}\textrm{Im}\,\tau)^s\,|{\rm Aut}(e^\perp/e)|}.$$ \end{lemma}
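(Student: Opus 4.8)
The plan is to compute the integral $F([v],\tau)$ directly, following the same template as Lemmas \ref{positive} and \ref{negative}, but now exploiting that $v$ lies on the null cone. First I would observe that when $v=\beta e$ with $e$ primitive isotropic, the stabilizer $\textrm{Stab}(v)$ acts on $\C\mathbb{H}^s$ fixing the point $[e]$ on the boundary, so it is convenient to pass to a horospherical (upper-half-space type) model of $\C\mathbb{H}^s$ adapted to the cusp $[e]$. Concretely, choose a hyperbolic $\Z[\zeta_k]$-basis in which $e$ and a dual isotropic vector $f$ span a hyperbolic plane with $e\star f = $ a generator of the different, and $e^\perp/\langle e\rangle$ carries the induced negative-definite Hermitian form; then $\textrm{Stab}(v)$ acts through a (Heisenberg-by-$\textrm{Aut}(e^\perp/e)$) group on the cusp, and the integrand $|q|^{-v^-\cdot v^-}$ depends only on the ``height'' coordinate of the horoball.

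The key computation is then to express $v^-\cdot v^-$ in the horospherical coordinates. I would write a point $p\in\C\mathbb{H}^s$ as $(t,\omega,z')$ where $t>0$ is the height, $\omega$ is the Heisenberg fiber coordinate, and $z'\in\C^{s-1}$ parametrizes $e^\perp/e$; since $v$ is a multiple of $e$ and $p\bullet p$ degenerates as we approach the cusp in a controlled way, $|v\bullet p|^2/(p\bullet p)$ becomes an explicit elementary function — I expect $v^-\cdot v^- = -\beta\overline{\beta}\cdot(\text{const})/t$ or similar, linear in $1/t$. The complex-hyperbolic volume form in these coordinates factors as $dp = (\text{const})\, t^{-2s-1}\,dt\,d\omega\,dV_{2s-2}(z')$ (this is the standard form of the invariant measure on a rank-one symmetric space in horospherical coordinates). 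Integrating out the Heisenberg and $z'$ directions over a fundamental domain for $\textrm{Stab}(v)$ produces a factor proportional to $\textrm{Vol}(\text{Heisenberg fiber})\cdot 1/|\textrm{Aut}(e^\perp/e)|$, where the Heisenberg volume is governed by the covolume of $e^\perp/e$, i.e. by a power of $\textrm{Im}\,\zeta_k$; the remaining $t$-integral is $\int_0^\infty e^{-C/t}\,t^{-2s-1+?}\,dt$, which after the substitution $u=C/t$ becomes a complete Gamma integral $\Gamma(s)$ and yields the $C^{-s}=(2\pi\beta\overline\beta\,\textrm{Im}\,\tau)^{-s}$ dependence. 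Collecting the constants — tracking the $4^s$ from the metric normalization, the $2\,\textrm{Im}\,\zeta_k$ factors from the lattice covolumes, and the $(s-1)!$ from $\Gamma(s)$ — gives exactly the stated formula.

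The main obstacle will be bookkeeping the normalization constants precisely: getting the right power of $2\,\textrm{Im}\,\zeta_k$ (which is $|1-\zeta_k|^2/|1+\zeta_k|^2$ times something, and enters both through the covolume of $\Z[\zeta_k]$ in $\C$ and through the rescalings $\star,\bullet,\cdot$ of Definition \ref{scalings}) and reconciling the $4^s$ appearing in the Klein-model volume form used in Lemmas \ref{positive}–\ref{negative} with the horospherical coordinates. I would handle this by writing the cusp model as a degeneration of the ball model already in use, so that the constant $4^s$ and the relation $e_0^-\cdot e_0^- = -r^2/(1-r^2)$ from Lemma \ref{positive} pass over directly, and only the new feature — the translational (Heisenberg) volume of the cusp cross-section, which is where $|\textrm{Aut}(e^\perp/e)|$ and the lattice covolume enter — needs fresh input. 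A secondary point to verify is that the $|Z(U(\Lambda))|^{-1}$ indeed cancels, i.e. that scalars act trivially on the fundamental domain even in the cuspidal direction, which follows as before since $\textrm{Stab}(v)$ for $v=\beta e\neq 0$ contains no nontrivial scalars acting trivially on $\C\mathbb{H}^s$.
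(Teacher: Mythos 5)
Your plan coincides with the paper's proof: both pass to Siegel-domain (horospherical) coordinates adapted to the cusp $[e]$, describe $\textrm{Stab}(v)$ as a Heisenberg-type extension of $\textrm{Aut}(e^\perp/e)$, unfold the integral over a fundamental domain for the cusp cross-section (whose volume supplies the powers of $\textrm{Im}\,\zeta_k$ via the covolume of $e^\perp/e$ and the period of the Heisenberg center), and reduce the remaining height integral to a complete $\Gamma(s)=(s-1)!$ after substitution. The one normalization to correct is that the paper takes $f\star e=1+\zeta_k$ (forced by $(1+\zeta_k)$-modularity) rather than a generator of the different, but this is precisely the bookkeeping issue you flagged.
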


\begin{proof} Since $\Lambda=(1+\zeta_k)\Lambda^\vee$, an easy argument shows that there is an isotropic vector $f$ such that $f\star e=1+\zeta_k$. So $H:=\Z[\zeta_k]e\oplus \Z[\zeta_k]f$ generates a copy of the unique $(1+\zeta_k)$-modular lattice of signature $(1,1)$. Then $H^{\perp}\cong e^\perp/e$ is a negative definite $(1+\zeta_k)$-modular lattice. Elements of $\textrm{Stab}(v)\subset U(\Lambda)$ correspond to triples $(T,w_0,m)$ where $T$ is a Hermitian isometry of $H^{\perp}$, $w_0\in H^\perp$, and $m\in \Z$ through the action
\begin{align*} e&\mapsto e \\ f&\mapsto f+\left(-\frac{w_0\star w_0}{|1+\zeta_k|^2}+(1-\overline{\zeta}_k)m\right) e+w_0 \\ w&\mapsto -(1+\zeta_k)^{-1}(w_0\star T w)e+T w \hspace{20pt}\textrm{for }w\in H^\perp.\end{align*} 

Let $\{e_2,\dots,e_s\}$ be an orthonormal basis of $H^\perp\otimes\C$. Define $$\tilde{e}:=\frac{1+\overline{\zeta}_k}{2}e$$ so that $f\bullet \tilde{e}=1$.
Let $(x,y,z_2,\dots,z_s)$ be coordinates for $\mathbb{C}^{1,s}$ in the basis $\{\tilde{e},f,e_2,\dots,e_k\}$ and let $z'=(z_2,\dots,z_s)$. Setting $y=1$, the tuples $(x, z')$ such that $2\textrm{Re}(x) - |z'|^2 > 0$ give coordinates on $\mathbb{CH}^s$. Since $v^+=\frac{v\bullet p}{p\bullet p}p$ and $v \bullet p = \frac{2}{|1+\zeta_k|^2} v \star p = \frac{2\overline{\beta}}{1+\zeta_k}$, we have $$v^-\cdot v^-=-v^+\cdot v^+=\frac{-4\beta\overline{\beta}}{|1+\zeta_k|^2(2\,\textrm{Re}(x)-|z'|^2)}.$$ 

We wish to compute the integral $F([\beta e_0],\tau)$:
$$\int_{\textrm{Stab}(v)\backslash\mathbb{CH}^s} \textrm{exp}\left(- \frac{8\pi\beta\overline{\beta}\,\textrm{Im}\,\tau}{|1+\zeta_k|^2(2\,\textrm{Re}(x)-|z'|^2)}\right)\,\frac{4^s\,dV_{2s}}{(2\,\textrm{Re}(x)-|z'|^2)^{s+1}}.$$ Let $a=\frac{1}{2\,\textrm{Re}(x)-|z'|^2}$ and $b=\textrm{Im}(x)$. We have $$\frac{-2 \,dV_{2s}}{(2\,\textrm{Re}(x)-|z'|^2)^2}=da\,db\,dV_{2s-2}.$$ Let $\Gamma_0\subset \textrm{Stab}(v)$ be the finite index subgroup such that $T=Id$. We may enlarge the domain of integration to $\Gamma_0\backslash \mathbb{CH}^s$ and divide by the index $[\textrm{Stab}(v):\Gamma_0]=|\textrm{Aut}(H^\perp)|$. The subgroup $\Gamma_0$ acts by translations in $H^\perp$ on the $z'$ coordinate and by translations by $\frac{4\textrm{Im}\,\zeta_k}{|1+\zeta_k|^2}$ on the $b$ coordinate. \begin{align*} F([\beta e_0],\tau)&= \frac{4^s}{ 2|\textrm{Aut}(H^\perp) | } \int_{H^\perp\backslash \C^{s-1}} \int_0^{\frac{4\textrm{Im}\,\zeta_k}{|1+\zeta_k|^2}}\int_0^\infty \!\!e^{-\frac{8\pi\beta\overline{\beta}\textrm{Im}\,\tau}{|1+\zeta_k|^2} a}a^{s-1}\,da\,db\,dV_{2s-2} \\ &=\frac{(2\,\textrm{Im}\,\zeta_k) 4^s(s-1)! \left(\frac{8}{|1+\zeta_k|^2}\pi\beta\overline{\beta}\,\textrm{Im}\,\tau\right)^{-s}\textrm{CoVol}(H^\perp)}{ |1+\zeta_k|^2 |\textrm{Aut}(H^\perp )| } \\ &= \left(\frac{2\pi\beta\overline{\beta}\,\textrm{Im}\,\tau}{(2\textrm{Im}\,\zeta_k)^2}\right)^{-s} \frac{(s-1)!}{2\,\textrm{Im}\,\zeta_k\,|\textrm{Aut}(H^\perp)|}.\end{align*} Note that CoVol$(H^\perp)=|1-\zeta_k|^{2s-2}$ because the lattice $(H^\perp,\cdot)$ satisfies $H^\perp=(1-\zeta_k)(H^\perp)^*$. The lemma follows because $H^\perp\cong e^\perp/e$.\end{proof}

\begin{proposition}\label{orbits} The orbits of primitive isotropic vectors in $\Lambda$ are in bijection with negative definite $(1+\zeta_k)$-modular lattices of rank $s-1$. \end{proposition}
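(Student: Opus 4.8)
The plan is to construct the bijection explicitly via $[e]\mapsto [e^\perp/e]$, where for a primitive isotropic $e\in\Lambda$ we set $e^\perp=\{x\in\Lambda:x\star e=0\}$; since $e$ is isotropic the Hermitian form descends to $e^\perp/\Z[\zeta_k]e$, and this quotient lattice depends only on $e$. The first task is to check the target is correct, i.e.\ that $e^\perp/e$ is a negative definite $(1+\zeta_k)$-modular lattice of $\Z[\zeta_k]$-rank $s-1$. As in the proof of Lemma~\ref{null}, the hypothesis $\Lambda=(1+\zeta_k)\Lambda^\vee$ together with primitivity of $e$ produces an isotropic $f\in\Lambda$ with $f\star e=1+\zeta_k$, and $H:=\Z[\zeta_k]e\oplus\Z[\zeta_k]f$ is a copy of the unique $(1+\zeta_k)$-modular lattice of signature $(1,1)$. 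I would then observe that in fact $\Lambda=H\oplus H^\perp$ as $\Z[\zeta_k]$-lattices, not merely over $\Q[\zeta_k]$: for $x\in\Lambda$, writing its component in $H\otimes\Q$ under the rational orthogonal splitting as $x_H=ae+bf$, one has $\bar b(1+\zeta_k)=x_H\star e=x\star e\in(1+\zeta_k)\Z[\zeta_k]$ (the membership since $e\in(1+\zeta_k)\Lambda^\vee$), so $b\in\Z[\zeta_k]$, and likewise $a\in\Z[\zeta_k]$, whence $x_H\in H$ and $x-x_H\in H^\perp$. Dualizing $\Lambda=H\oplus H^\perp$ and using that $H=(1+\zeta_k)H^\vee$ yields $H^\perp=(1+\zeta_k)(H^\perp)^\vee$; and $H^\perp$ is negative definite of rank $s-1$, being the orthogonal complement of the indefinite summand $H$ inside signature $(1,s)$. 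Finally $e^\perp/e\cong H^\perp$, because $H^\perp\subset e^\perp$ is injective modulo $\Z[\zeta_k]e$ and surjective (the $H$-component of any $x\in e^\perp$ is forced to be a $\Z[\zeta_k]$-multiple of $e$). Since $U(\Lambda)$ carries $e^\perp$ isometrically to $(ge)^\perp$ and $e^\perp/e$ does not involve the auxiliary $f$, the map $[e]\mapsto[e^\perp/e]$ is well-defined on orbits.

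For injectivity, suppose $\psi:e^\perp/e\xrightarrow{\ \sim\ }(e')^\perp/e'$ is an isometry. Choosing $f,f'$ as above gives decompositions $\Lambda=H_e\oplus H_e^\perp$ and $\Lambda=H_{e'}\oplus H_{e'}^\perp$ with $e$, resp.\ $e'$, the first basis vector and $H_e^\perp\cong e^\perp/e$, $H_{e'}^\perp\cong(e')^\perp/e'$. Since $(e,f)$ and $(e',f')$ have the same Gram matrix $\twobytwo{0}{\overline{1+\zeta_k}}{1+\zeta_k}{0}$, there is an isometry $H_e\to H_{e'}$ sending $e\mapsto e'$; combining it with $\psi$ (transported through the identifications $H_e^\perp\cong e^\perp/e$ and $H_{e'}^\perp\cong(e')^\perp/e'$) gives a self-isometry of $\Lambda$, hence an element of $U(\Lambda)$, taking $e$ to $e'$, so $[e]=[e']$. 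For surjectivity, given a negative definite $(1+\zeta_k)$-modular lattice $M$ of rank $s-1$, let $H$ denote the rank-two lattice with a basis of isotropic vectors $e_0,f_0$ and $f_0\star e_0=1+\zeta_k$; then $H\oplus M$ is $(1+\zeta_k)$-modular of signature $(1,s)$, so by Proposition~\ref{unique} it is isometric to $\Lambda$, and the image $e$ of the primitive isotropic vector $e_0$ satisfies $e^\perp/e\cong e_0^\perp/e_0=M$.

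Essentially all of the geometric content---producing $f$, the splitting $\Lambda=H\oplus H^\perp$, and the modularity and negative-definiteness of $H^\perp$---is already carried out inside the proof of Lemma~\ref{null}, so the work here is mostly organizational: confirming that $e^\perp/e$ is intrinsic to $e$ and that the gluing in the injectivity step is an honest lattice isometry. The one indispensable external input is Proposition~\ref{unique}: it is precisely the uniqueness of the indefinite $(1+\zeta_k)$-modular lattice that upgrades ``$M$ arises as $e_0^\perp/e_0$ inside $H\oplus M$'' to ``$M$ arises as $e^\perp/e$ inside $\Lambda$,'' which is exactly what surjectivity requires. I expect the step requiring the most care to be the integral identification $\Lambda=H\oplus H^\perp$ over $\Z[\zeta_k]$, since it is what forces $e^\perp/e\cong H^\perp$ and hence guarantees that $e^\perp/e$ is genuinely $(1+\zeta_k)$-modular rather than merely a finite-index overlattice of such.
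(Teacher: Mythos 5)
Your proposal is correct and takes essentially the same route as the paper: the map $[e]\mapsto[e^\perp/e]$, injectivity by gluing an isometry of $H_e\to H_{e'}$ with the given isometry of the complements, and surjectivity from $\Lambda\cong H\oplus M$ via Proposition \ref{unique}. The extra care you take with the integral splitting $\Lambda=H\oplus H^\perp$ is a detail the paper leaves implicit in Lemma \ref{null}, and your version of it is sound.
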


\begin{proof} The bijection sends the orbit of $e$ to the lattice $e^\perp/e$. Every negative definite $(1+\zeta_k)$-modular lattice $L$ of rank $s-1$ embeds into $\Lambda$ because $\Lambda\cong L\oplus H$ by Proposition \ref{unique}. Hence, there exists for all $L$ a primitive isotropic vector $e\in \Lambda$ such that $e^\perp/e\cong L$.

On the other hand, if $e_1^\perp/e_1 \cong e_2^\perp/e_2$, choose as in the proof of Lemma \ref{null} primitive isotropic $f_1$ and $f_2$ such that $f_i\star e_i=1+\zeta_k$. Then there is an isometry of $\Lambda$ which sends $e_1\mapsto e_2$, $f_1\mapsto f_2$, and $$\{e_1,f_1\}^\perp=e_1^\perp/e\rightarrow e_2^\perp/e_2=\{e_2,f_2\}^\perp.$$ \end{proof}

\begin{definition} Define the {\it mass} to be $m_{s-1}:=\sum_L |\textrm{Aut}(L)|^{-1}$ where $L$ ranges over all isomorphism types of $(1+\zeta_k)$-modular lattices of rank $s-1$. \end{definition}

We now combine the previous four lemmas with Proposition \ref{orbits}:

\begin{theorem}\label{integrals}The function $g_{\Lambda}(\tau)$ is a Maass form of weight $1-s$ for $\Gamma_1(|1-\zeta_k|^2)$, whose Fourier coefficients are: \begin{align*}c^{-}(0) &=  \displaystyle\frac{{\rm Vol}(U(\Lambda)\backslash\mathbb{CH}^s)}{|Z(U(\Lambda))|} (\textrm{Im}\,\tau)^s \\[0.2em]
c^{-}(n) &= \displaystyle  n^{-s}\!\!\!\sum_{\substack{[v]\in U(\Lambda)\backslash\Lambda \\ v\cdot v=-2n}}\frac{{\rm Vol}(X_v)}{(4\pi)^{s-1}} \\[0.2em]
c^{+}(0) &= \displaystyle\frac{(2\,\textrm{Im}\,\zeta_k)^{2s-1}(s-1)!}{(2\pi)^s} \zeta_{\Z[\zeta_k]}(s) \,m_{s-1} \\[0.2em]
c^{+}(n) &= \displaystyle\sum_{\substack{[v]\in U(\Lambda)\backslash\Lambda \\ v\cdot v=2n}}\frac{n^{-s}}{|\textrm{Stab}(v)|}
\end{align*}
where $\zeta_{\Z[\zeta_k]}(s)=\sum_{(\beta)\neq (0)} (\beta\overline{\beta})^{-s}$ is the Dedekind zeta function. \end{theorem}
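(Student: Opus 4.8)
The plan is to substitute the four lemmas for $F([v],\tau)$ into the orbit decomposition of the defining integral, collect the result by the exponent $q^n$, and then extract modularity and growth from Corollary \ref{umod2} and the explicit expansion. Concretely, I would begin from the identity $g_\Lambda(\tau)=(\textrm{Im}\,\tau)^s\sum_{[v]\in U(\Lambda)\backslash\Lambda}F([v],\tau)\,q^{\frac{1}{2}v\cdot v}$ furnished by (\ref{one}) and (\ref{two}), noting that $(\Lambda,\cdot)$ is even (Definition \ref{scalings}) so $n=\tfrac12 v\cdot v\in\Z$, and that $n$ is positive, zero, or negative exactly when $v$ has positive norm, is isotropic (including $v=0$), or has negative norm. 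Substituting Lemma \ref{positive} for the positive-norm orbits, Lemma \ref{negative} for the negative-norm ones, and Lemma \ref{zero} for $v=0$, and cancelling the common factor $(\textrm{Im}\,\tau)^s$, one reads off directly: orbits with $v\cdot v=2n>0$ each contribute $n^{-s}|\textrm{Stab}(v)|^{-1}q^n$, giving $c^+(n)$; orbits with $v\cdot v=-2n<0$ each contribute $n^{-s}(4\pi)^{1-s}\textrm{Vol}(X_v)\,\Gamma(s,4\pi n\,\textrm{Im}\,\tau)\,q^{-n}$, giving $c^-(n)$; and $v=0$ contributes $|Z(U(\Lambda))|^{-1}\textrm{Vol}(U(\Lambda)\backslash\mathbb{CH}^s)\,(\textrm{Im}\,\tau)^s$, the displayed $c^-(0)$.

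The only step requiring more than substitution is the $q^0$-coefficient coming from nonzero isotropic vectors. Here I would use that every such vector is, uniquely up to units, $v=\beta e$ with $e$ primitive isotropic and $\beta\in\Z[\zeta_k]$ nonzero; that its $U(\Lambda)$-orbit is determined by the ideal $(\beta)$ together with the orbit of $e$; and that by Proposition \ref{orbits} the orbits of $e$ biject with isomorphism classes of negative definite $(1+\zeta_k)$-modular lattices $L\cong e^\perp/e$ of rank $s-1$. Feeding Lemma \ref{null} into the identity of the previous paragraph, the total $q^0$-contribution of these orbits is
\[
\frac{(2\,\textrm{Im}\,\zeta_k)^{2s-1}(s-1)!}{(2\pi)^s}\Big(\sum_{(\beta)\neq(0)}(\beta\overline{\beta})^{-s}\Big)\Big(\sum_L\frac{1}{|\textrm{Aut}(L)|}\Big)=\frac{(2\,\textrm{Im}\,\zeta_k)^{2s-1}(s-1)!}{(2\pi)^s}\,\zeta_{\Z[\zeta_k]}(s)\,m_{s-1},
\]
which is $c^+(0)$. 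This is also where the hypothesis $s>1$ is used: it guarantees convergence of the Dedekind zeta series, while $m_{s-1}$ is finite by Hermite--Minkowski finiteness of definite lattices of bounded rank and discriminant.

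For the modularity, I would observe that the underlying $\Z$-lattice $(\Lambda,\cdot)$ has signature $(2,2s)$, so Corollary \ref{umod2} gives $\Theta(p,\gamma\tau)=(c\tau+d)(c\overline{\tau}+d)^s\Theta(p,\tau)$ for all $\gamma=\twobytwo{a}{b}{c}{d}\in\Gamma_1(|1-\zeta_k|^2)$; combining this with $(\textrm{Im}\,\gamma\tau)^s=(\textrm{Im}\,\tau)^s(c\tau+d)^{-s}(c\overline{\tau}+d)^{-s}$ and carrying the transformation under the integral sign yields $g_\Lambda(\gamma\tau)=(c\tau+d)^{1-s}g_\Lambda(\tau)$. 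Real-analyticity of $g_\Lambda$ is clear since $\Theta(p,\tau)$ is a locally uniform sum of real-analytic functions integrated over a finite-volume orbifold; the Fourier expansion assembled above has exactly the shape demanded by Definition \ref{maass}; and it has polynomial growth at the cusps (the $(\textrm{Im}\,\tau)^s$-term dominates as $\textrm{Im}\,\tau\to\infty$, and the transformation law handles the other rational cusps). Hence $g_\Lambda$ is a Maass form of weight $1-s$ for $\Gamma_1(|1-\zeta_k|^2)$.

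The hard part is none of this bookkeeping but the analytic justification that $\int_{U(\Lambda)\backslash\mathbb{CH}^s}\Theta(p,\tau)\,dp$ converges and may be evaluated term by term over the $U(\Lambda)$-orbit decomposition in spite of the noncompactness of the domain. One needs that $U(\Lambda)$ is arithmetic in $U(1,s)$ --- so that $U(\Lambda)\backslash\mathbb{CH}^s$ and each $X_v=\textrm{Stab}(v)\backslash\mathbb{CH}^{s-1}$ have finite volume and there are finitely many orbits of vectors of any given norm --- that the growth of $\Theta$ toward the cusps of the fundamental domain is governed exactly by the isotropic vectors treated above, and that $s>1$ makes the resulting boundary sums converge. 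This is precisely the content of Satz 1 of \cite{siegel}, which I would invoke to license the interchange of sum and integral; with it in hand, Lemmas \ref{zero}--\ref{null} identify the Fourier coefficients as stated.
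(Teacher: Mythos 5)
Your proposal is correct and follows essentially the same route as the paper: substitute Lemmas \ref{zero}--\ref{null} into the orbit decomposition $g_\Lambda(\tau)=(\textrm{Im}\,\tau)^s\sum_{[v]}F([v],\tau)q^{\frac{1}{2}v\cdot v}$, use Proposition \ref{orbits} and the sum over $(\beta)$ to assemble $c^+(0)$, and get the weight $1-s$ transformation from Corollary \ref{umod2} together with the weight $(-1,-1)$ of $\textrm{Im}\,\tau$. The only cosmetic difference is that you outsource the cusp growth and the interchange of sum and integral entirely to Siegel's Satz 1, whereas the paper also sketches a direct bound on $\Theta(p,\tau)$ near the cusps using the coordinate $a$ from Lemma \ref{null}.
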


\begin{proof} From (\ref{one}) and (\ref{two}), we have $$g_\Lambda(\tau)=(\textrm{Im}\,\tau)^s\sum_{[v]\in \Gamma\backslash \Lambda} F([v],\tau)q^{\frac{v\cdot v}{2}}.$$ We have computed in Lemmas and \ref{zero}, \ref{positive}, \ref{negative}, and \ref{null} the value of $F([v],\tau)$ for all $[v]\in U(\Lambda)\backslash\Lambda$. Grouping the terms for $v=0$, $v\cdot v=-2n$, and $v\cdot v=2n$ respectively give the above formulas for $c^-(0)$, $c^-(n)$, and $c^+(n)$. Grouping the terms for $v$ isotropic gives $$c^+(0)=\sum_{ \substack{ [\beta e]\in U(\Lambda)\backslash \Lambda \\ e\textrm{ primitive} \\ \textrm{isotropic} }}  \frac{(2\,\textrm{Im}\,\zeta_k)^{2s-1}(s-1)!}{(2\pi\beta\overline{\beta})^s\,|\textrm{Aut}(e^\perp/e)|}.$$ Summing over $\beta$ gives the term $\zeta_{\Z[\zeta_k]}(s)$. Then, by Proposition \ref{orbits}, the sum over orbits of primitive isotropic vectors $e$ gives the mass term $m_{s-1}$.

We have shown that $g_\Lambda(\tau)$ has the appropriate Fourier series expansion for a Maass form. By Corollary \ref{umod2}, $\Theta(p,\tau)$ is modular of weight $(1,s)$ for $\Gamma_1(|1-\zeta_k|^2)$. Since $\textrm{Im}\,\tau$ is of weight $(-1,-1)$ for any $\gamma\in SL_2(\Z)$, we conclude that $g_{\Lambda}$ is of weight $(1-s,0)$. That is, $$g_{\Lambda}(\gamma\cdot \tau)=(c\tau +d)^{1-s}g_{\Lambda}(\tau)$$ for all $\gamma\in \Gamma_1(|1-\zeta_k|^2)$.

Finally, we sketch a proof that $g_{\Lambda}(\tau)$ has polynomial growth at the cusps of $\mathbb{H}$. One can show $\Theta(p,\tau)$ is bounded by $C(p)((\textrm{Im}\,\tau)^{-s}+1)$ where $C(p)$ is bounded by an absolute constant times $1+a^{-1}$---here $a$ is the coordinate of $p$ near a cusp as defined in Lemma \ref{null}. By the computations in Lemma \ref{null}, the pushforward of the complex hyperbolic volume form to the $a$ coordinate is proportional to $a^{s-1}da$. Thus the integral of $C(p)\,dp$ over $U(\Lambda)\backslash \mathbb{CH}^s$ converges. We conclude that the integral of $\Theta(p,\tau)\,dp$ is bounded by some constant times $(\textrm{Im}\,\tau)^{-s}+1$. Therefore $g_{\Lambda}(\tau)$ is bounded by a constant times $1+(\textrm{Im}\,\tau)^s$.

Hence $g_{\Lambda}(\tau)$ is a Maass form of weight $1-s$ for $\Gamma_1(|1-\zeta_k|^2)$. \end{proof}

\begin{corollary}\label{main} Let $\Lambda$ be a $(1+\zeta_k)$-modular lattice of signature $(1,s)$ and let $\Gamma$ be a finite index subgroup of $U(\Lambda)$. Then $$[U(\Lambda):\Gamma]\left(\frac{1}{2\pi i}\partial_\tau\right)^sg_{\Lambda}(\tau)=-\frac{s!\textnormal{Vol}(\Gamma\backslash \mathbb{CH}^s)}{|Z(\Gamma)| (4\pi)^s}+ \sum_{\substack{[v]\in \Gamma\backslash \Lambda \\  v \cdot v>0}} \frac{1}{|\textnormal{Stab}_\Gamma(v)|}q^{\frac{1}{2}v\cdot v}$$ is a weight $1+s$ modular form for $\Gamma_1(|1-\zeta_k|^2)$. \end{corollary}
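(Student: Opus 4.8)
The plan is to realize the claimed form as the iterated derivative $D^s g_\Lambda$, where $D:=\frac{1}{2\pi i}\partial_\tau$ and $g_\Lambda$ is the weight $1-s$ Maass form of Theorem \ref{integrals} for $\Gamma_1(|1-\zeta_k|^2)$. The main input is Bol's identity: for every $\gamma\in SL_2(\Z)$ and every smooth function $f$ on $\mathbb{H}$ one has $D^s\big(f\,|_{1-s}\,\gamma\big)=\big(D^s f\big)\,|_{1+s}\,\gamma$, where $|_w$ denotes the weight-$w$ slash action. Since $g_\Lambda$ is modular of weight $1-s$ for $\Gamma_1(|1-\zeta_k|^2)$, this immediately gives that $D^s g_\Lambda$ transforms with weight $1+s$ for $\Gamma_1(|1-\zeta_k|^2)$; the weight $1+s$ is even since $s$ is odd in the cases at hand, and correspondingly $(-1)^s=-1$.

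Next I would evaluate $D^s$ on Fourier expansions. At the cusp $\infty$, Theorem \ref{integrals} gives $g_\Lambda=\sum_{n\ge 0}c^+(n)q^n+c^-(0)(\textrm{Im}\,\tau)^s+\sum_{n<0}c^-(n)\Gamma(s,4\pi|n|\textrm{Im}\,\tau)q^n$. Using $\Gamma(s,z)=(s-1)!\,e^{-z}\sum_{j=0}^{s-1}z^j/j!$ (valid as $s\in\Z_{>0}$), each term $\Gamma(s,4\pi|n|\textrm{Im}\,\tau)q^n$ with $n<0$ is a $\C$-linear combination of monomials $(\textrm{Im}\,\tau)^j\,\overline q^{\,-n}$ with $0\le j\le s-1$; since $\partial_\tau\overline q=0$ and $D$ drops the $\textrm{Im}\,\tau$-degree by one, $D^s$ annihilates all of them. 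One also computes $D^s\big(c^-(0)(\textrm{Im}\,\tau)^s\big)=\frac{(-1)^s s!}{(4\pi)^s}c^-(0)$ and $D^s q^n=n^s q^n$, so at $\infty$ we get $D^s g_\Lambda=\frac{(-1)^s s!}{(4\pi)^s}c^-(0)+\sum_{n\ge 1}c^+(n)n^s q^n$, which is holomorphic. The identical computation applied to $g_\Lambda|_{1-s}\gamma$ at any other cusp $\gamma\infty$ shows $(D^s g_\Lambda)|_{1+s}\gamma$ is holomorphic there as well: the polynomial growth of $g_\Lambda$ at $\gamma\infty$, which is part of the definition of a Maass form and was established in Theorem \ref{integrals}, forbids negative $q$-powers in the holomorphic part of $g_\Lambda|_{1-s}\gamma$, as those would contribute exponential growth. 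Hence $D^s g_\Lambda$ is a holomorphic modular form of weight $1+s$ for $\Gamma_1(|1-\zeta_k|^2)$.

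It then remains to match $[U(\Lambda):\Gamma]\,D^s g_\Lambda$ with the right-hand side. For the constant term, substitute $c^-(0)=\textrm{Vol}(U(\Lambda)\backslash\mathbb{CH}^s)/|Z(U(\Lambda))|$ from Theorem \ref{integrals}, use $(-1)^s=-1$, and invoke the elementary index relation $[U(\Lambda):\Gamma]=[PU(\Lambda):P\Gamma]\cdot|Z(U(\Lambda))|/|Z(\Gamma)|$ together with multiplicativity of $\mathbb{CH}^s$-volume under the projectivized index; this turns $[U(\Lambda):\Gamma]\cdot\frac{-s!}{(4\pi)^s}c^-(0)$ into $-\,s!\,\textrm{Vol}(\Gamma\backslash\mathbb{CH}^s)/\big(|Z(\Gamma)|(4\pi)^s\big)$. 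For the $q$-series, Theorem \ref{integrals} gives $c^+(n)n^s=\sum_{[v]\in U(\Lambda)\backslash\Lambda,\ v\cdot v=2n}1/|\textrm{Stab}_{U(\Lambda)}(v)|$, and I would then apply the orbit-mass identity $\sum_{[w]_\Gamma\subseteq [v]_{U(\Lambda)}}1/|\textrm{Stab}_\Gamma(w)|=[U(\Lambda):\Gamma]/|\textrm{Stab}_{U(\Lambda)}(v)|$, which follows by letting the finite group $\textrm{Stab}_{U(\Lambda)}(v)$ act on the right on the finite set $\Gamma\backslash U(\Lambda)$ and using orbit--stabilizer. Multiplying by $[U(\Lambda):\Gamma]$ and summing over $U(\Lambda)$-orbits of positive-norm vectors regroups the series as $\sum_{[w]\in\Gamma\backslash\Lambda,\ w\cdot w>0}\frac{1}{|\textrm{Stab}_\Gamma(w)|}q^{\frac12 w\cdot w}$, exactly as claimed.

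I expect the main obstacle to be purely bookkeeping rather than conceptual: confirming that $D^s$ kills precisely the non-holomorphic part except for the displayed constant, tracking every index, center, and volume factor in the constant term, and verifying holomorphy at the cusps other than $\infty$ via the growth condition. Once these are in place, the corollary is just Bol's identity and the orbit-mass identity applied to the explicit Fourier expansion of Theorem \ref{integrals}.
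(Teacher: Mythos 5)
Your proposal is correct and follows the same overall structure as the paper's proof: reduce the general $\Gamma$ to $U(\Lambda)$ via the index/center identity, apply an $s$-fold derivative to the explicit Fourier expansion of Theorem \ref{integrals}, and observe that $(-1)^s=-1$ because $s$ is odd. The one genuine difference is that the paper treats the key analytic step as a black box, citing Theorem 1.1 of \cite{raising} for the statement that $D^s$ sends a weight $1-s$ Maass form to a holomorphic weight $1+s$ modular form with the stated constant term, whereas you re-derive that result inline: Bol's identity for the transformation law, the finite expansion $\Gamma(s,z)=(s-1)!\,e^{-z}\sum_{j=0}^{s-1}z^j/j!$ to see that $D^s$ annihilates the terms $(\mathrm{Im}\,\tau)^j\,\overline{q}^{\,-n}$ with $j\le s-1$, the computation $D^s(\mathrm{Im}\,\tau)^s=(-1)^s s!/(4\pi)^s$, and the growth condition to rule out poles at the other cusps. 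Your version is more self-contained and makes the mechanism transparent (in particular why only $c^-(0)$ and the positive coefficients survive); the paper's is shorter. You also make explicit the orbit--stabilizer bookkeeping $\sum_{[w]_\Gamma\subseteq[v]_{U(\Lambda)}}|\mathrm{Stab}_\Gamma(w)|^{-1}=[U(\Lambda):\Gamma]\,|\mathrm{Stab}_{U(\Lambda)}(v)|^{-1}$, which the paper leaves implicit in the one-line index reduction. All the individual computations you state check out.
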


\begin{proof} First, we may as well assume $\Gamma=U(\Lambda)$ because taking a finite index subgroup $\Gamma\subset U(\Lambda)$ multiplies the righthand side by a factor of $$[U(\Lambda):\Gamma]=\frac{|Z(U(\Lambda))|}{|Z(\Gamma)|}[\mathbb{P}U(\Lambda):\mathbb{P}\Gamma].$$

Let $D=\frac{1}{2\pi i}\partial_\tau=q\partial_q$. If $f(\tau)$ is a Maass form of weight $1-s$ with $s>1$ then by Theorem 1.1 of \cite{raising} $$D^sf=(-1)^ss!\frac{c_0}{(4\pi)^s}+ \sum_{n>0} c^+(n)n^sq^n$$ is a holomorphic modular form of weight $1+s$.
Applying this theorem to $g_{\Lambda}(\tau)$ and multiplying by $[U(\Lambda):\Gamma]$ gives the corollary, by the explicit computation of the coefficients of $g_{\Lambda}(\tau)$ in Theorem \ref{integrals}. Note $(-1)^s=-1$ because $s$ must be odd for there to exist a $(1+\zeta_k)$-modular lattice $\Lambda$ of signature $(1,s)$. \end{proof}

\section{Formulas for Numbers of Tilings} In this section, we give formulas for the number of convex triangle-, square-, and hexagon-tilings of the sphere.  Recall that the weight of a tiling $\mathcal{T}$ by $k$-gons with angle deficits $\{2\pi \mu_i/k\}$ is $$wt(\mathcal{T})=\frac{1}{|\textrm{Aut}^+(\mathcal{T})|}\prod_{i=1}^n \frac{(1-\mu_i/k)^{\mu_i-1}}{\mu_i!}.$$

\begin{theorem}\label{formula} The weighted number of oriented convex triangulations of $S^2$ with $2n$ triangles is $$\sum_{|\mathcal{T}|=2n} wt(\mathcal{T})= \frac{809}{2^{15}3^{13}5^2}\sigma_9(n).$$ The weighted number of oriented convex square-tilings with $n$ squares is $$\sum_{|\mathcal{T}|=n}wt(\mathcal{T})=\frac{1}{2^{13}3^2}(\sigma_5(n)+8\sigma_5(n/2))$$ where $\sigma_m(n)=0$ when $n\notin \Z$. The weighted number of oriented, convex hexagon-tilings with $n$ hexagons (see Convention \ref{convention}) is $$\sum_{|\mathcal{T}|=n}wt(\mathcal{T})= \frac{1}{2^33^4}(\sigma_3(n)-9\sigma_3(n/3)).$$ \end{theorem}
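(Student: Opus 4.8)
The plan is to recognize, for each of the three tile shapes, the generating function of the weighted tiling counts as a holomorphic modular form of fixed weight and level, to locate it inside a finite-dimensional space of modular forms, and then to pin it down using the weighted counts of the tilings with one or two tiles. Fix $k\in\{6,4,3\}$ and put $s=2k-3$; let $\Gamma\subset U(\Lambda)$ be the monodromy group of Proposition \ref{stuff}. By Propositions \ref{stuff} and \ref{weight1} and Definition \ref{weight}, the oriented convex $\tfrac{2k}{k-2}$-gon tilings with a prescribed value of $v\cdot v$ correspond to the orbits in $\Gamma\backslash\Lambda^+$ of that norm, and $wt(\mathcal{T})=1/|\mathrm{Stab}_\Gamma(v)|$. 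Since $x\cdot x$ is the number of triangles, resp.\ twice the number of squares or hexagons (Section 3), Corollary \ref{main} tells us that
$$\Phi_k(\tau):=[U(\Lambda):\Gamma]\Bigl(\tfrac{1}{2\pi i}\partial_\tau\Bigr)^{s}g_\Lambda(\tau)$$
is a holomorphic modular form of weight $1+s$ for $\Gamma_1(|1-\zeta_k|^2)$, with constant term $c_0=-\frac{s!\,\mathrm{Vol}(\Gamma\backslash\mathbb{CH}^s)}{|Z(\Gamma)|(4\pi)^s}$ and, for $N\geq1$, with coefficient of $q^N$ equal to the weighted number of tilings with $2N$ triangles (resp.\ $N$ squares, $N$ hexagons). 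Thus $\Phi_6\in M_{10}(SL_2(\Z))$, $\Phi_4\in M_6(\Gamma_1(2))$, $\Phi_3\in M_4(\Gamma_1(3))$, and Theorem \ref{formula} amounts to computing these three forms.

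Here finite-dimensionality enters: $\dim M_{10}(SL_2(\Z))=1$ and $\dim M_6(\Gamma_1(2))=\dim M_4(\Gamma_1(3))=2$, and in each case there are no cusp forms of that weight, so the spaces are spanned by Eisenstein series. Writing $E_4=1+240\sum\sigma_3(n)q^n$, $E_6=1-504\sum\sigma_5(n)q^n$, $E_{10}=1-264\sum\sigma_9(n)q^n$, we may put
$$\Phi_6=\lambda\,E_{10},\qquad \Phi_4=\alpha\,E_6(\tau)+\beta\,E_6(2\tau),\qquad \Phi_3=\gamma\,E_4(\tau)+\delta\,E_4(3\tau),$$
since $E_6(2\tau)$ and $E_4(3\tau)$ are oldforms lying in the respective spaces and are linearly independent from $E_6(\tau)$, $E_4(\tau)$. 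It then remains to evaluate a few low-order Fourier coefficients of $\Phi_k$ by enumerating the smallest tilings and applying Proposition \ref{weight1}.

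For triangles there are exactly two tilings with two triangles: the metric double of a unit equilateral triangle, with $\mu=(4,4,4)$ and $|\mathrm{Aut}^+|=6$, hence $wt=1/(2^{10}3^{13})$; and the ``bicone'' formed by folding each of two equilateral triangles across a pair of sides through one vertex and gluing along the two resulting boundary loops, with $\mu=(5,5,2)$ and $|\mathrm{Aut}^+|=2$, hence $wt=1/(2^{15}3^{11}5^2)$. Their weights sum to $809/(2^{15}3^{13}5^2)$, the coefficient of $q^1$ in $\Phi_6$; as $\dim M_{10}=1$ this forces $-264\lambda=809/(2^{15}3^{13}5^2)$, so the coefficient of $q^N$ is $\tfrac{809}{2^{15}3^{13}5^2}\sigma_9(N)$. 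For squares, the unique one-square tiling is the metric double of the $45$-$45$-$90$ triangle, with $\mu=(3,3,2)$ and $|\mathrm{Aut}^+|=2$, so $wt=1/(2^{13}3^2)$ and $-504\alpha=1/(2^{13}3^2)$; enumerating the short list of two-square tilings and summing their weights determines $\beta$, with the result $-504\beta=8/(2^{13}3^2)$. For hexagons, the unique one-hexagon tiling (Convention \ref{convention}) has black-vertex data $\mu=(2,2,2)$ and $|\mathrm{Aut}^+|=3$, so $wt=1/(2^33^4)$ and $240\gamma=1/(2^33^4)$; since $E_4(3\tau)$ affects $\Phi_3$ only in degrees $0,3,6,\dots$, the $q^1$- and $q^2$-coefficients determine only $\gamma$, and one further input is needed — either the weighted count of three-hexagon tilings, or the constant term $c_0=-\frac{6\,\mathrm{Vol}(\Gamma\backslash\mathbb{CH}^3)}{|Z(\Gamma)|(4\pi)^3}$, which is computable from the mass formula via the $c^+(0)$-term of Theorem \ref{integrals} — and either way yields $240\delta=-9/(2^33^4)$. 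Reading off the $q^N$-coefficients of $\lambda E_{10}$, $\alpha E_6(\tau)+\beta E_6(2\tau)$, $\gamma E_4(\tau)+\delta E_4(3\tau)$ gives the three displayed formulas.

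The main obstacle is the combinatorial content of the last step: showing that the lists of tilings with one or two tiles are complete, and correctly computing the orientation-preserving automorphism group of each such tiling — this group is exactly what governs $|\mathrm{Stab}_\Gamma(v)|$, and hence the relevant Fourier coefficient, by Proposition \ref{weight1}. A secondary point is the hexagon case, where the two lowest non-constant coefficients are proportional, so that determining the form requires either one more tiling count (three hexagons) or an evaluation of the complex-hyperbolic orbifold volume.
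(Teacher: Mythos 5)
Your proposal is correct and follows essentially the same route as the paper: Corollary \ref{main} places the generating function in the space of weight $2k-2$ modular forms for $\Gamma_1(|1-\zeta_k|^2)$, and the Eisenstein coefficients are then pinned down by the weighted counts of the one- and two-tile tilings, whose partitions, automorphism groups, and weights you compute correctly. Your observation that the hexagon case is underdetermined by the one- and two-hexagon counts alone (since $E_4(3\tau)$ first contributes at $q^3$) is a genuine subtlety that the paper's own proof passes over silently, and resolving it via the orbifold volume or a three-hexagon count, as you suggest, is exactly what is needed.
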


\begin{remark} Using the group of all automorphisms Aut$(\mathcal{T})$ in the definition of the weight and counting tilings without orientation halves the above formulas. \end{remark}

\begin{proof}[Proof of Theorem \ref{formula}] By Proposition \ref{stuff}, there is a bijective correspondence between convex tilings $\mathcal{T}$ and orbits $[v]\in\Gamma\backslash \Lambda^+$ where $v \cdot v$ is the number of triangles, or twice the number of squares or hexagons. Furthermore, by Proposition \ref{weight1}, $wt(\mathcal{T})=|\textrm{Stab}_{\Gamma}(v)|^{-1}$. Hence, the generating function $h_k(q)$ for the weighted number of convex tilings by $\frac{2k}{k-2}$-gons is $$h_k(q)=\!\!\sum_{\substack{[v]\in \Gamma\backslash \Lambda^+ \\ v\cdot v>0}} \frac{1}{|\textrm{Stab}_\Gamma(v)|}q^{\frac{1}{2}v\cdot v}.$$ By Proposition \ref{dual}, the lattice $\Lambda$ is the unique $(1+\zeta_k)$-modular lattice of signature $(1,2k-3)$ over $\Z[\zeta_k]$. Thus Corollary \ref{main} implies that \begin{align}\label{volumeeq} \tilde{h}_k(q):=h_k(q)-\frac{(2k-3)!\textrm{Vol}(\Gamma\backslash \mathbb{CH}^{2k-3})}{k(4\pi)^{2k-3}}\end{align} is a modular form of weight $2k-2$ for $\Gamma_1(|1-\zeta_k|^2)$. Note that $|Z(\Gamma)|=k$ in all three cases. We now treat each case.

{\bf Triangles:} We have $\tilde{h}_6(q)$ is of weight $10$ for $\Gamma_1(1)=SL_2(\Z)$. The space of weight 10 modular forms for $SL_2(\Z)$ is one-dimensional over $\C$ and spanned by the weight $10$ Eisenstein series $E_{10}(\tau)=-\frac{1}{264}+\sum_{n>0} \sigma_9(n)q^n.$ Hence there is a constant $A$ such that $$\tilde{h}_6(q)=AE_{10}(\tau).$$ There are two triangulations consisting of two triangles, depicted on the left in Figure \ref{fig1}. The corresponding partitions $\mu$ of $12$ are $(2,5,5)$ and $(4,4,4)$ and the oriented automorphism groups have orders $2$ and $6$, respectively. Thus $$A=\frac{1}{2\cdot (3)(155520)^2}+\frac{1}{6\cdot (648)^3}=\frac{809}{2^{15}\cdot 3^{13}\cdot 5^2}.$$

{\bf Squares:} We have that $\tilde{h}_4(q)$ is of weight $6$ for $\Gamma_1(2)$. The space of weight $6$ modular forms for $\Gamma_1(2)$ is two-dimensional over $\C$ and spanned by the weight $6$ Eisenstein series $E_6(\tau)= -\frac{1}{504}+\sum_{n>0} \sigma_5(n)q^n$ and $E_6(2\tau)$. Let $B$ and $C$ be constants such that $$\tilde{h}_4(q)=BE_6(\tau)+CE_6(2\tau).$$ There is one square-tiling consisting one square, whose partition is $(2,3,3)$ and whose oriented automorphism group has order $2$, on the left in Figure \ref{fig2}. There are three square-tilings with two squares, with partitions $(2,3,3)$, $(1,1,3,3)$, and $(2,2,2,2)$ and automorphism groups of orders $2$, $2$, and $8$, respectively, on the right in Figure \ref{fig2}. Solving the resulting system of equations, we have \begin{align*} B&=\frac{1}{2^{13}\cdot 3^2} \\ C&=\frac{1}{2^{10}\cdot 3^2}. \end{align*}

{\bf Hexagons:} We have that $\tilde{h}_3(q)$ is of weight $4$ for $\Gamma_1(3)$. The space of weight $4$ modular forms for $\Gamma_1(3)$ is two-dimensional over $\C$ and spanned by the weight $4$ Eisenstein series $E_4(\tau)=\frac{1}{240}+\sum_{n>0}\sigma_3(n)q^n$ and $E_4(3\tau)$. Hence there are constants $D$ and $F$ such that $$\tilde{h}_3(\tau)=DE_4(\tau)+FE_4(3\tau).$$ There is one hexagon-tiling consisting of one hexagon satisfying Convention \ref{convention}, with curvatures $(2,2,2)$ and automorphism group of order $3$, on the left in Figure \ref{fig3}. There is one hexagon-tilings with two hexagons, with curvatures $(2,2,1,1)$ automorphism groups of order 2, on the right in Figure \ref{fig3}. Therefore \begin{align*} D&=\frac{1}{2^3\cdot 3^4} \\ F&=-\frac{1}{2^3\cdot 3^2}. \end{align*} \end{proof}

\begin{example} We verify the formula in Theorem \ref{formula} for triangulations with four triangles. The righthand side of Figure \ref{fig1} depicts the non-negative curvature triangulations with four triangles. The partitions are $(3,4,5)$, $(3,3,3,3)$, $(1,1,5,5)$, and $(2,2,4,4)$ and the oriented automorphism groups have orders $1$, $12$, $2$, and $4$ respectively. Then the formula correctly states that \begin{align*} \frac{1}{1\cdot (24)(648)(155520)}+ \frac{1}{12\cdot (24)^4}&+\frac{1}{2\cdot (155520)^2}+\frac{1}{4\cdot (3)^2(648)^2} \\ =\frac{809}{2^{15}3^{13}5^2}&(1+2^9).\end{align*} \end{example}

Finally, we remark that Equation (\ref{volumeeq}) in Theorem \ref{formula} and the determination of the constants $A$, $B$, $C$, $D$, and $F$ allows us to compute the complex hyperbolic volumes of the moduli spaces $\Gamma\backslash\mathbb{CH}^{2k-3}$. When $k=6$, we have $$\textrm{Vol}(\Gamma\backslash \mathbb{CH}^9)=\frac{6(4\pi)^9}{9!}\cdot \frac{1}{264}\cdot \frac{809}{2^{15}3^{13}5^2}=\frac{809\pi^9}{2^6\cdot 3^{17}\cdot 5^3\cdot 7\cdot 11}.$$ This is exactly the volume computed in \cite{mcmullen}, which serves as a check for the formula in Theorem \ref{formula}. Similarly, when $k=4$ and $s=5$, we have $$\textrm{Vol}(\Gamma\backslash\mathbb{CH}^5)=\frac{4(4\pi)^5}{5!}\cdot \frac{1}{504}\cdot 9\cdot \frac{1}{2^{13}3^2}=\frac{\pi^5}{2^7\cdot 3^3\cdot 5\cdot 7},$$ which also agrees with the result in \cite{mcmullen}. Finally, when $k=3$ and $s=3$, Theorem \ref{formula} again correctly predicts $$\textrm{Vol}(\Gamma\backslash\mathbb{CH}^3)=\frac{3(4\pi)^3}{3!}\cdot \frac{1}{-240}\cdot (-8)\cdot \frac{1}{2^33^4}=\frac{2\pi^3}{3^5\cdot 5}.$$ 

\begin{figure}[H]
\includegraphics[width=1.3in]{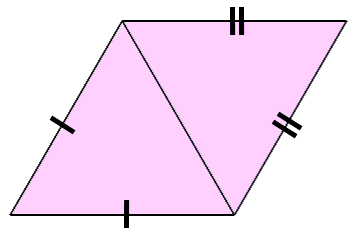}
\includegraphics[width=1.55in]{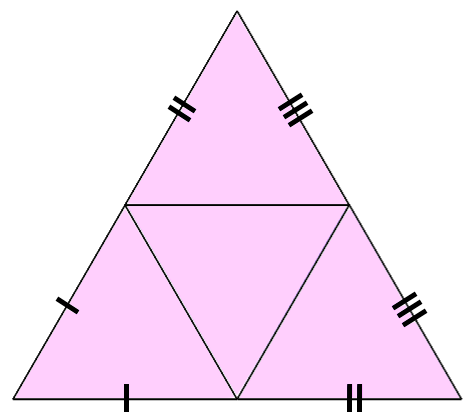}\hspace{10pt}
\includegraphics[width=1.55in]{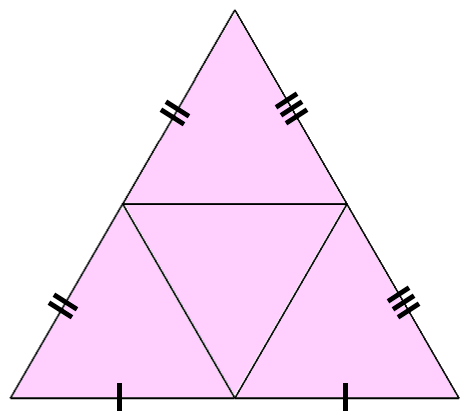}

\includegraphics[width=1.3in]{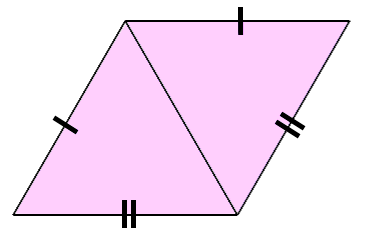}
\includegraphics[width=1.75in]{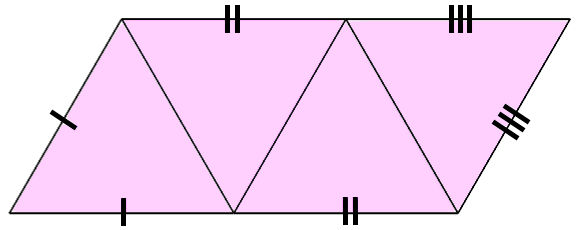}
\includegraphics[width=1.75in]{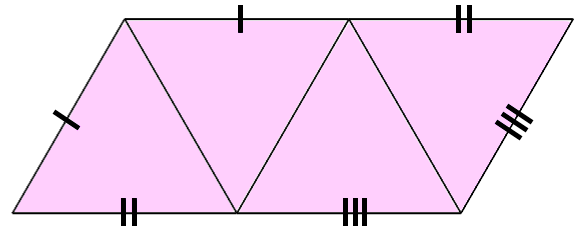}
\caption{The convex triangulations with two or four triangles.}
\label{fig1}
\end{figure}

\begin{figure}[H]
\includegraphics[width=0.95in]{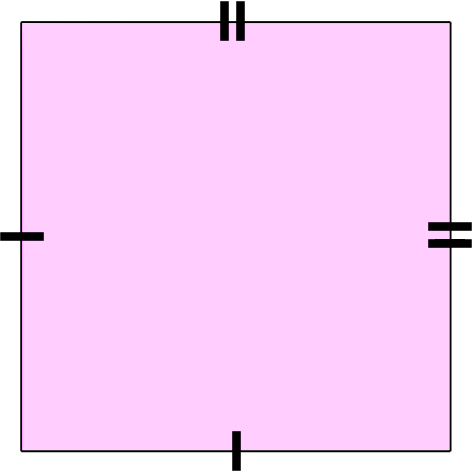}\hspace{10pt}
\includegraphics[width=0.95in]{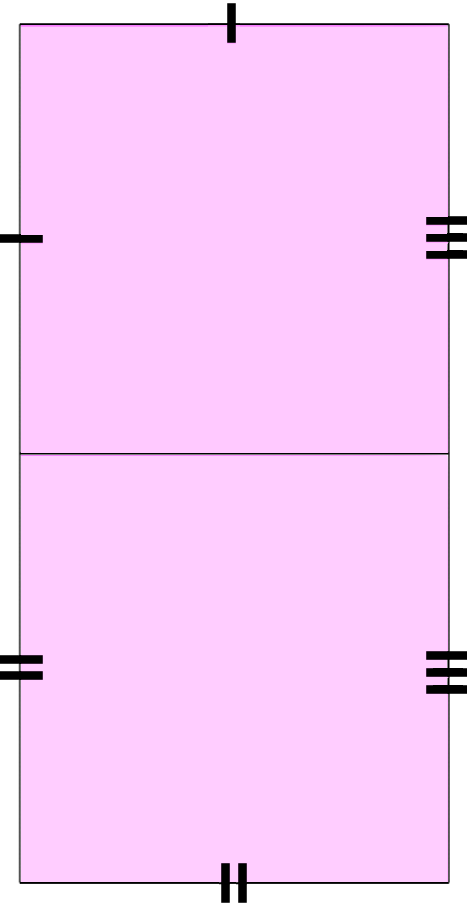}\hspace{10pt}
\includegraphics[width=0.95in]{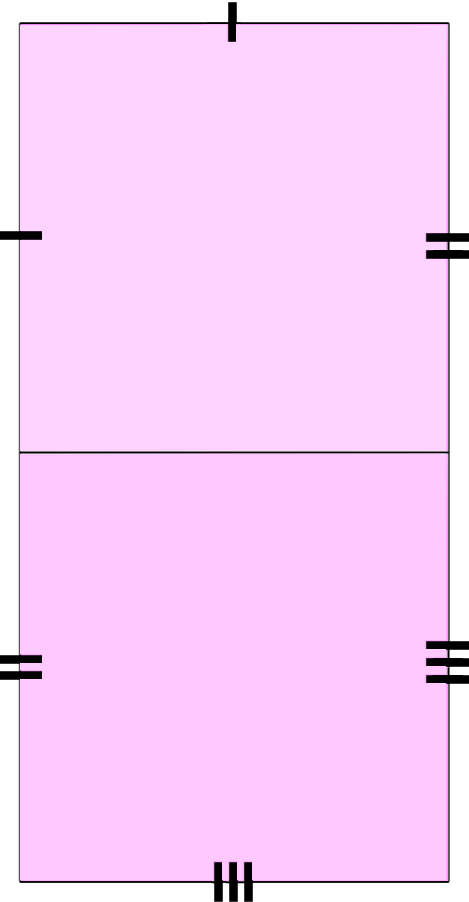}\hspace{10pt}
\includegraphics[width=0.95in]{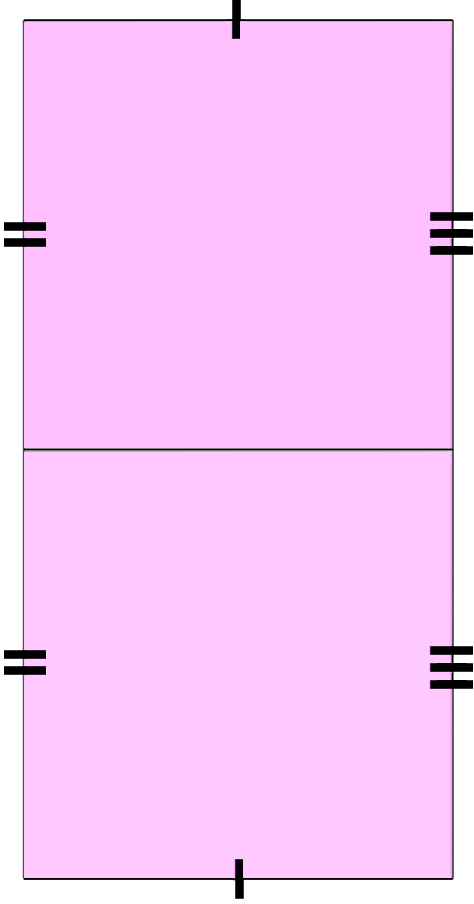}
\caption{The convex square-tilings with one or two squares.}
\label{fig2}
\end{figure}

\begin{figure}[H]
\includegraphics[width=1.35in]{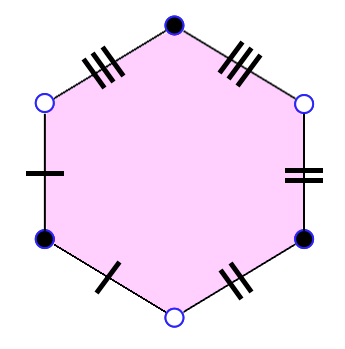}\hspace{10pt}
\includegraphics[width=2.2in]{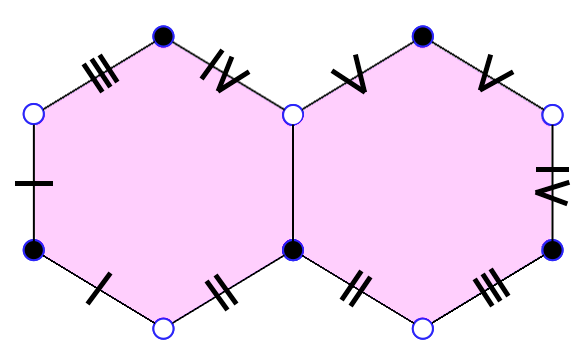}
\caption{The convex hexagon-tilings with one or two hexagons.}
\label{fig3}
\end{figure}


\begin{thebibliography}{99}

\bibitem{alexandrov}
A. D. Alexandrov.
\emph{Convex Polyhedra}.
Springer Science \& Business Media,
2005.

\bibitem{allcock}
D. Allcock.
\emph{The Leech lattice and complex hyperbolic reflections}.
Inventiones mathematicae,
140(2): 283-301,
2000.

\bibitem{allcock2}
D. Allcock.
\emph{New complex- and quaternion-hyperbolic reflection groups}.
Duke Math. Journal 103(2): 303--333, 
2000.

\bibitem{basak}
T. Basak.
\emph{The complex Lorentzian Leech lattice and the bimonster}.
Journal of Algebra, 309(1): 32-56,
2007.

\bibitem{borcherds}
R. E. Borcherds.
\emph{Automorphic forms on $O_{s+ 2, 2}(\R)$ and infinite products.}
Inventiones mathematicae 120.1: 161-213,
1995.

\bibitem{raising}
J. Bruinier, K. Ono, and R. C. Rhoades.
\emph{Differential operators for harmonic weak Maass forms and the vanishing of Hecke eigenvalues}.
Mathematische Annalen, 342(3): 673-693,
2008.

\bibitem{dm}
P. Deligne, and G. D. Mostow.
\emph{Monodromy of hypergeometric functions and non-lattice integral monodromy}.
Publications MathŽmatiques de l'IH\'ES 63.1: 5-89,
1986.




\bibitem{laza}
R. Laza.
\emph{Triangulations of the sphere and degenerations of K3 surfaces}. 
arXiv: 0809.0937,
2008.

\bibitem{looijenga}
E. N. Looijenga.
\emph{Uniformization by Lauricella functions---an overview of the theory of Deligne-Mostow}.
Arithmetic and geometry around hypergeometric functions: 207-244,
Birkh\"auser Basel,
2007.

\bibitem{mcmullen}
C. T. McMullen.
\emph{The Gauss-Bonnet theorem for cone manifolds and volumes of moduli spaces}.
2013.

\bibitem{mostow}
G. D. Mostow.
\emph{Generalized Picard lattices arising from half-integral conditions.}
Publications Math\'ematiques de l'IH\'ES 63: 91-106,
1986.


\bibitem{ono}
K. Ono.
\emph{Unearthing the visions of a master: harmonic Maass forms and number theory}.
Current Developments in Math., Vol. 2008:347-454,
2009.

\bibitem{picard}
\'E. Picard.
\emph {Sur une extension aux fonctions de deux variables du probleme de Riemann relatif aux fonctions hyperg\'eom\'etriques}.
Annales scientifiques de l'\'Ecole Normale SupŽrieure 10: 305-322,
1881. 

\bibitem{sage}
W. A. Stein, et al. Sage Mathematics Software. The Sage Development Team, {\tt http://www.sagemath.org}, 2005.

\bibitem{schwartz}
R. Schwartz. 
\emph{Notes on Shapes of Polyhedra.}
arXiv: 1506.07252,
2015.

\bibitem{siegel}
C. L. Siegel.
\emph{Indefinite quadratische Formen und Funktionentheorie II.}
Mathematische Annalen 124.1: 364-387,
1951.

\bibitem{thurston}
W. P. Thurston.
\emph{Shapes of polyhedra and triangulations of the sphere}.
Geometry and Topology monographs 1: 511-549,
1998.

\bibitem{tutte}
W. T. Tutte.
\emph{A census of planar triangulations}.
Canadian Journal of Math, 14(1): 21-38,
1962.

\end{thebibliography}
\end{document}